\crefname{equation}{}{}
\crefname{figure}{Figure}{Figures}
\crefname{assumption}{Assumption}{Assumptions}
\crefname{condition}{Condition}{Conditions}
\renewcommand\th{\textsuperscript{th}\xspace}
\newcommand\tsup[2][2]{%
	\def\useanchorwidth{T}%
	\ifnum#1>1%
	\stackon[-.5pt]{\tsup[\numexpr#1-1\relax]{#2}}{\scriptscriptstyle\sim}%
	\else%
	\stackon[.5pt]{#2}{\scriptscriptstyle\sim}%
	\fi%
}
\definecolor{forestgreen}{rgb}{0.13, 0.55, 0.13}
\definecolor{amber}{rgb}{1.0, 0.75, 0.0}
\definecolor{bananayellow}{rgb}{.8, 0.6, 0}
\definecolor{cuhkpl}{RGB}{152,24,147}
\newcounter{comment}\setcounter{comment}{0}
\newmdtheoremenv[%
linewidth = 1pt,%
roundcorner = 10pt,%
leftmargin = 0,%
rightmargin = 0,%
backgroundcolor = green!3,%
outerlinecolor = blue!70!black,%
splittopskip = \topskip,%
ntheorem = true,%
]{theorem}{Theorem}
\newmdtheoremenv[%
linewidth = 1pt,%
roundcorner = 10pt,%
leftmargin = 0,%
rightmargin = 0,%
backgroundcolor = green!3,%
outerlinecolor = blue!70!black,%
splittopskip = \topskip,%
ntheorem = true,%
]{corollary}{Corollary}
\newmdtheoremenv[%
linewidth = 1pt,%
roundcorner = 10pt,%
leftmargin = 0,%
rightmargin = 0,%
backgroundcolor = green!3,%
outerlinecolor = blue!70!black,%
splittopskip = \topskip,%
ntheorem = true,%
]{lemma}{Lemma}
\newmdtheoremenv[%
linewidth = 1pt,%
roundcorner = 10pt,%
leftmargin = 0,%
rightmargin = 0,%
backgroundcolor = green!3,%
outerlinecolor = blue!70!black,%
splittopskip = \topskip,%
ntheorem = true,%
]{proposition}{Proposition}
\newmdtheoremenv[%
linewidth = 1pt,%
roundcorner = 10pt,%
leftmargin = 0,%
rightmargin = 0,%
backgroundcolor = blue!3,%
outerlinecolor = blue!70!black,%
splittopskip = \topskip,%
ntheorem = true,%
]{definition}{Definition}
\newmdtheoremenv[%
linewidth = 1pt,%
roundcorner = 10pt,%
leftmargin = 0,%
rightmargin = 0,%
backgroundcolor = yellow!3,%
outerlinecolor = blue!70!black,%
splittopskip = \topskip,%
ntheorem = true,%
]{condition}{Condition}
\newmdtheoremenv[%
linewidth = 1pt,%
roundcorner = 10pt,%
leftmargin = 0,%
rightmargin = 0,%
backgroundcolor = yellow!3,%
outerlinecolor = blue!70!black,%
splittopskip = \topskip,%
ntheorem = true,%
]{assumption}{Assumption}
\theoremstyle{definition}
\newmdtheoremenv[%
linewidth = 1pt,%
roundcorner = 10pt,%
leftmargin = 0,%
rightmargin = 0,%
backgroundcolor = cyan!3,%
outerlinecolor = blue!70!black,%
splittopskip = \topskip,%
ntheorem = true,%
]{example}{Example}
\theoremstyle{definition}
\newmdtheoremenv[%
linewidth = 1pt,%
roundcorner = 10pt,%
leftmargin = 0,%
rightmargin = 0,%
backgroundcolor = red!3,%
outerlinecolor = blue!70!black,%
splittopskip = \topskip,%
ntheorem = true,%
]{remark}{Remark}
	\NewDocumentCommand\DownArrow{O{2.0ex} O{black}}{%
		\mathrel{\tikz[baseline] \draw [<-, line width=0.5pt, #2] (0,0) -- ++(0,#1);}
	}
	\definecolor{mygreen}{rgb}{0,0.6,0}
	\definecolor{mygray}{rgb}{0.5,0.5,0.5}
	\definecolor{mymauve}{rgb}{0.58,0,0.82}
\tiny\color{mygray}, 
	\newcommand{\df}{\mathrm{d}}
	\newcommand{\real}{\mathbb{R}}
	\newcommand{\Klv}{\mathcal{K}}
	\newcommand{\dt}{\text{D}_{\text{type}}}
	\newcommand{\Klvt}{\mathcal{K}_{t}}
	\newcommand{\T}{\intercal}
	\DeclareMathOperator*{\argmin}{arg\,min}
	\newcommand {\dotprod}[1]{\left\langle #1\right\rangle}
	\newcommand {\vnorm}[1]{\left\| #1\right\|}
	\newcommand {\abs}[1]  {\left| #1 \right|}
	\newcommand {\bigO}[1]{\mathcal O\left( #1\right)}
	\newcommand {\tbigO}[1]{\tilde{\mathcal{O}} \left( #1\right)}
	\newcommand\bbR{\ensuremath{\mathbb{R}}} 
	\newcommand {\range}  {\textnormal{Range}}
	\newcommand {\Null}  {\textnormal{Null}}
	\newcommand {\sign}  {\textnormal{Sign}}
	\newcommand {\Span}  {\textnormal{Span}}
	\newcommand {\zero}   {\mathbf{0}}
	\renewcommand {\aa}   {\mathbf{a}}
	\newcommand {\dd}   {\mathbf{d}}
	\newcommand {\ddk}  {{\dd}_{k}}
	\newcommand {\dn}   {d}
	\newcommand {\ee}   {\mathbf{e}}
	\renewcommand {\gg}  {\mathbf{g}}
	\newcommand {\ggk}  {{\gg}_{k}}
	\newcommand {\ggkk}  {{\gg}_{k+1}}
	\newcommand {\tgg}  {\tilde{\gg}}
	\newcommand {\tg}  {\tilde{g}}
	\newcommand {\tgd}  {{\tilde{g}}_{d}}
	\newcommand {\HH}  {\mathbf{H}}
	\newcommand {\HHk}  {\HH_{k}}
	\newcommand {\tHH}  {\tilde{\HH}}
	\newcommand {\HHd}  {\HH^{\dagger}}
	\newcommand {\HHdk}  {\left[\HH_{k}\right]^{\dagger}}
	\newcommand {\tHHk}  {\tilde{\HH}_{k}}
	\newcommand {\eye}  {\mathbf{I}}
	\newcommand {\kry}  {\mathcal{K}_{t}}
	\newcommand {\Kt}[1]{\mathcal{K}_{t}\left(#1\right)}
	\newcommand {\TN}  {T_\text{N}}
	\newcommand {\TNL}  {T_\text{L}}
	\newcommand {\TP}  {T_\text{P}}
	\newcommand {\TS}  {T_\text{S}}
	\newcommand {\KN}  {K_\text{N}}
	\newcommand {\KS}  {K_\text{S}}
	\newcommand {\Lg}  {L_{\gg}}
	\newcommand {\LH}  {L_{\HH}}
	\newcommand {\mi}  {m_{i}}
	\newcommand {\pp}   {\mathbf{p}}
	\newcommand {\QQ}  {\mathbf{Q}}
	\newcommand {\qq}   {{\mathbf{q}}}
	\newcommand {\RR}  {\mathbf{R}}
	\newcommand {\bRR}  {\bar{\RR}}
	\newcommand {\rr}   {\mathbf{r}}
	\newcommand {\rrkt}  {{\rr}_{k}^{(t)}}
	\newcommand {\rrktp}  {{\rr}_{k}^{(t-1)}}
	\newcommand {\rrt}  {{\rr}_{t}}
	\newcommand {\rrtp}  {{\rr}_{t-1}}
	\newcommand {\rrg}  {\rr_{g}}
	\renewcommand {\ss}  {\mathbf{s}}
	\newcommand {\sst}  {\ss_{t}}
	\newcommand {\sskt}  {\ss_{k}^{(t)}}
	\newcommand {\ssktp}  {\ss_{k}^{(t-1)}}
	\newcommand {\sstp}  {\ss_{t-1}}
	\newcommand {\TT}  {\mathbf{T}}
	\newcommand {\TTt}  {\mathbf{T}_{t}}
	\newcommand {\TTg}  {\mathbf{T}_{g}}
	\newcommand {\tTTt}  {\tilde{\mathbf{T}}_{t}}
	\newcommand {\bTT}  {\bar{\TT}}
	\newcommand {\ttt}  {\mathbf{t}}
	\newcommand {\UU}  {\mathbf{U}}
	\newcommand {\UUT}  {\UU^{\intercal}}
	\newcommand {\UUp}  {\UU_{\perp}}
	\newcommand {\UUpT}  {\UU_{\perp}^{\intercal}}
	\newcommand {\uu}   {\mathbf{u}}
	\newcommand {\VV}  {\mathbf{V}}
	\newcommand {\VVt}  {\VV_{t}}
	\newcommand {\VVg}  {\VV_{g}}
	\newcommand {\VVtT}  {\VV_{t}^{\T}}
	\newcommand {\vv}   {\mathbf{v}}
	\newcommand {\WW}  {\mathbf{W}}
	\newcommand {\ww}   {\mathbf{w}}
	\newcommand {\xx}   {\mathbf{x}}
	\newcommand {\xxs}  {\xx^{\star}}
	\newcommand {\xxk}  {{\xx}_{k}}
	\newcommand {\xxkn}  {{\xx}_{k+1}}
	\newcommand {\YY}  {\mathbf{Y}}
	\newcommand {\yy}   {\mathbf{y}}
	\newcommand {\zz}  {\mathbf{z}}
	\newcommand {\alphak}  {{\alpha}_{k}}
	\newcommand {\talpha}  {\tilde{\alpha}}
	\newcommand {\tbeta}  {\tilde{\beta}}
	\newcommand {\epsg}  {\varepsilon_{\gg}}
	\newcommand {\epsH}  {\varepsilon_{\HH}}
	\newcommand {\beps}  {\bar{\varepsilon}}
	\newcommand {\kappap}  {\kappa^{+}}
		\newcommand {\kappan}  {\kappa^{-}}
	\newcommand {\nuj}  {{\nu}_{j}}
	\newcommand {\tnu}  {\tilde{\nu}}
	\newcommand {\psip}  {{\psi}_{+}}
	\newcommand {\psiz}  {{\psi}_{0}}
	\newcommand {\psin}  {{\psi}_{-}}
	\newcommand{\hf}{\frac12}
	\newcommand{\defeq}{\triangleq}
	\renewcommand{\Pr}{\hbox{\bf{Pr}}}
\begin{document}
	
	\title{A Newton-MR algorithm with complexity guarantees for nonconvex smooth unconstrained optimization}
	
	\author{Yang Liu\thanks{Mathematical Institute, University of Oxford, UK. {Email: \tt{yang.liu@maths.ox.ac.uk}}.}
		\and
		Fred Roosta\thanks{School of Mathematics and Physics, University of Queensland, Australia, and International Computer Science Institute, Berkeley, USA. Email: \tt{fred.roosta@uq.edu.au}}
	}
	
	\date{\today}
	\maketitle
	
	\abstract{In this paper, we consider variants of Newton-MR algorithm, initially proposed in \cite{roosta2018newton}, for solving unconstrained, smooth, but non-convex optimization problems. Unlike the overwhelming majority of Newton-type methods, which rely on conjugate gradient algorithm as the primary workhorse for their respective sub-problems, Newton-MR employs minimum residual (MINRES) method. Recently, \cite{liu2022minres} establishes certain useful monotonicity properties of MINRES as well as its inherent ability to detect non-positive curvature directions as soon as they arise. We leverage these recent results and show that our algorithms come with desirable properties including competitive first and second-order worst-case complexities. Numerical examples demonstrate the performance of our proposed algorithms.
		
    \section{Introduction}
\label{sec:intro}
Consider the unconstrained optimization problem
\begin{equation}
	\label{eq:obj}
	\min_{\xx \in \real^{\dn}} f(\xx),
\end{equation}
where $ f:\real^{\dn} \rightarrow \real $ is sufficiently smooth.  Over the last few decades, many
optimization algorithms have been developed to solve\cref{eq:obj} in a variety of settings. Among them, the class of second-order algorithms play a prominent role.
The canonical example of such algorithms is arguably the classical Newton's method whose iterations are typically written as $ \xx_{k+1} = \xxk + \alphak \ddk $ with 
\begin{align}
	\label{eq:linear_system}
	\HHk \ddk = - \ggk,
\end{align}
where $ \xxk$, $\ggk = \nabla f(\xxk)$, $\HHk = \nabla^{2} f(\xxk)$, and $\alphak $ are respectively the current iterate, the gradient, the Hessian matrix, and the step-size that is often chosen using an Armijo-type line-search to enforce sufficient decrease in $ f $ \cite{nocedal2006numerical}. When $ f $ is smooth and strongly convex, it is well known that the local and global convergence rates of the classical Newton's method are, respectively, quadratic and linear \cite{nesterov2004introductory,boyd2004convex,nocedal2006numerical}. 
For such problems, if forming the Hessian matrix explicitly and/or solving \cref{eq:linear_system} exactly is prohibitive, the update direction $ \ddk $ is obtained using conjugate gradient (CG) to approximately solve \cref{eq:linear_system}, resulting in the celebrated Newton-CG method \cite{nocedal2006numerical,fasano2009nonmonotone,gould2000exploiting}. 

However, in non-convex problems, where the Hessian matrix can be singular and/or indefinite, the classical Newton's method and its straightforward Newton-CG variant, lack any convergence guarantees. 
To address this, many Newton-type variants have been proposed which aim at extending Newton-CG beyond strongly convex problems to more general settings, e.g., trust-region \cite{conn2000trust} and cubic regularization \cite{nesterov2006cubic,cartis2011adaptiveI,cartis2011adaptiveII}. However, many of these extensions involve sub-problems that are non-trivial to solve, e.g., the sub-problems of trust-region and cubic regularization methods are non-linear and non-convex \cite{xuNonconvexEmpirical2017,xuNonconvexTheoretical2017}. For instance, the CG-Steihaug method \cite{steihaug1983conjugate} is not guaranteed to solve the trust-region sub-problem to an arbitrary accuracy. This can negatively affect the convergence speed of the trust-region method. Indeed, if either of the negative curvature or the boundary is encountered too early, the CG-Steihaug method terminates and the resulting step is only slightly, if at all, better than the Cauchy direction \cite{conn2000trust}.

In this light, there has been recent efforts in studying non-convex Newton-type methods whose sub-problems involve elementary linear algebra problems, which are much better understood. 
Notably among these is \cite{royer2020newton}, which enhances the classical Newton-CG approach with safeguards to detect non-positive curvature (NPC) direction in the Hessian, that may arise during the iterations of CG.  
By exploiting NPC directions, the method in \cite{royer2020newton} can be applied to general non-convex settings and enjoys favorable complexity guarantees that have been shown to be optimal in certain settings. Stochastic variants of the method in \cite{royer2020newton} has also recently been considered in \cite{yao2021inexact}.
In fact, beyond CG, all conjugate direction methods such as conjugate residual (CR) \cite{saad2003iterative} involve iterations that allow for ready access to NPC directions. Using this property, \cite{dahito2019conjugate} studies variants of Newton's algorithm where CR replaces CG as the sub-problem solver, and by employing NPC direction, obtains asymptotic convergence guarantees for non-convex problems.
Arguably, however, the iterative method-of-choice when it comes to real symmetric but potentially indefinite matrices is the celebrated Minimum Residual (MINRES) algorithm \cite{paige1975solution,choi2011minres}. Inspired by this, and by reformulating the sub-problems as simple symmetric linear least-squares, a new variant of Newton's method, called Newton-MR, is introduced in \cite{liu2021convergence,roosta2018newton} where the classical CG algorithm is replaced with MINRES. However, \cite{liu2021convergence,roosta2018newton} are limited in their scope in that, unlike the Newton-CG alternatives in \cite{royer2020newton}, the proposed Newton-MR algorithms can only be applied to a sub-class of non-convex objectives, known as invex problems \cite{mishra2008invexity}. This shortcoming lies in the fact that the algorithms proposed in \cite{liu2021convergence,roosta2018newton} do not have the ability to leverage NPC directions.

Recently, \cite{liu2022minres} has established several non-trivial properties of MINRES that mimic, and in fact surpass, those found in CG. These include certain useful monotonicity properties as well as an inherent ability to detect NPC directions, when they arise. These properties allow MINRES to be considered as an alternative sub-problem solver to CG for many Newton-type non-convex optimization algorithms. Equipped with the results of \cite{liu2022minres}, in this paper we aim to address the shortcomings of the algorithms in \cite{liu2021convergence,roosta2018newton} and introduce variants of Newton-MR algorithms, which can be applied to general non-convex problems and enjoy highly desirable complexity guarantees. As compared with Newton-CG alternatives in \cite{royer2020newton}, we will show that not only do our proposed Newton-MR variants have advantageous convergence properties, but they also have superior empirical performance in a variety of examples.

\begin{informal*}
	Our contributions and main results are informally summarized below.
	\begin{enumerate}[label = {\bfseries (\arabic*)}]
		\item For optimization of non-convex optimization \cref{eq:obj}, we propose variants of Newton-MR (\cref{alg:NewtonMR_1st,alg:NewtonMR_2nd}), which under certain assumptions, provide the following guarantees, which have been shown to be optimal. A table comparing \cref{alg:NewtonMR_1st,alg:NewtonMR_2nd} with other state-of-the-art second-order methods can be found in \cref{table:complexity}.
		\begin{enumerate}[label = {\bfseries (1.\roman*)}]
			\item After at most $\mathcal{O}(\varepsilon^{-3/2})$ iterations, \cref{alg:NewtonMR_1st} guarantees approximate first-order optimality where $ \vnorm{\nabla f(\xx)} \leq \varepsilon $ (\cref{thm:complexity_first_alg_opt}). Furthermore, this is guaranteed in at most  $\mathcal{O}(\varepsilon^{-3/2})$ gradient and Hessian-vector product evaluations\footnote{Following \cite{royer2018complexity,royer2020newton}, we mainly consider gradient and Hessian-vector product evaluations as dominant costs in every iteration, which is henceforth referred to as operational complexity.}  (\cref{thm:complexity_first_Hv_opt}).
			\item After at most $\mathcal{O}(\varepsilon^{-3/2})$ iterations, \cref{alg:NewtonMR_2nd} guarantees, with probability one, approximate second-order optimality where $ \vnorm{\nabla f(\xx)} \leq \varepsilon $ and $ \lambda_{\min} \left(\nabla f(\xx)\right) \geq -\sqrt{\varepsilon}$ (\cref{thm:complexity_second_alg}). With high probability, we then obtain second-order operation complexity of order $\mathcal{O}(\varepsilon^{-7/4})$ (\cref{thm:complexity_second_Hv}), which is then improved to $\tilde{\mathcal{O}}(\varepsilon^{-3/2})$ for functions with a certain structural property in regions near saddle points (\cref{cor:complexity_second_Hv_saddle}).
		\end{enumerate}
		
		\item Of potential independent interests, we give a few novel properties of MINRES. 
		\begin{enumerate}[label = {\bfseries (2.\roman*)}]
			\item We build on the results of \cite{liu2022minres} and provide further refined theoretical guarantees of MINRES in terms of obtaining a NPC direction and certificate of positive (semi-)definiteness (or lack thereof) for $\nabla^2 f(\xx)$ (\cref{thm:T_indefinite,thm:H_PSD}). 
			\item We also perform a novel convergence analysis for MINRES, which improves upon the existing bounds in terms of dependence on the spectrum for indefinite matrices (\cref{lemma:MINRES_complexity_Sol}).
		\end{enumerate}
	\end{enumerate}
\end{informal*}

\begin{table}[htbp!]
	\centering
	\scalebox{0.75}{
	\begin{tabular}{|c|c|c|c|c|}
		\hline &&&& \\[-1em]
		\multirow{2}{*}{Algorithms} & \multicolumn{2}{|l|}{\hspace*{3cm} $ \| \ggk \| \leq \epsg $} & \multicolumn{2}{|l|}{\hspace*{2.5mm} $ \| \ggk \| \leq \epsg $ and $\lambda_{\min}(\HHk) \geq - \epsH $}  \\
		\cline{2-5} \\[-1em]
		& Iteration Comp. & Operation Comp. & Iteration Comp. & Operation Comp. \\
		\hline \hline &&&& \\[-1em]
		TR2 \cite{cartis2022evaluation} & $ \bigO{\epsg^{-2}} $ & --- & $ \bigO{\max \{\epsg^{-2}\epsH^{-1}, \epsH^{-3}\}} $ & --- \\ 
		\hline &&&& \\[-1em]
		TRACE\cite{curtis2017trust} & $ \bigO{\epsg^{-3/2}} $ & --- & $ \bigO{\max \{\epsg^{-3/2}, \epsH^{-3}\}} $ & --- \\ 
		\hline &&&& \\[-1em]
		I-TRACE\cite{curtis2022worst} & $ \bigO{\epsg^{-3/2}} $ & $ \tbigO{\epsg^{-3/2}} $ & --- & --- \\ 
		\hline &&&& \\[-1em]
		TR$\epsilon$\cite{curtis2021trust} & --- & --- & $ \tbigO{\max \{\epsg^{-2}\epsH, \epsH^{-3}\}} $ & $ \tbigO{\max \{\epsg^{-2}\epsH^{1/2}, \epsH^{-7/2}\}} $ \\ 
		\hline &&&& \\[-1em]
		LS$\epsilon$\cite{royer2020newton} & $ \bigO{\max \{\epsg^{-3}\epsH^{3}, \epsH^{-3}\}} $ & $ \tbigO{\max \{\epsg^{-3}\epsH^{5/2}, \epsH^{-7/2}\}} $ & $ \bigO{\max \{\epsg^{-3}\epsH^{3}, \epsH^{-3}\}} $ & $ \tbigO{\max \{\epsg^{-3}\epsH^{5/2}, \epsH^{-7/2}\}} $ \\
		\hline &&&& \\[-1em]
		Newton-MR & $ \bigO{\epsg^{-3/2}} $ & $\bigO{\epsg^{-3/2}}$ & $ \bigO{\max \{\epsg^{-3/2}, \epsH^{-3}\}} $ & $ \max \{ \tbigO{\epsg^{-3/2}}, \bigO{\epsH^{-7/2}} \} $ \\ 
		\hline &&&& \\[-1em]
		ARC(AR2) \cite{cartis2022evaluation} & $ \bigO{\epsg^{-3/2}} $ & --- & $ \bigO{\max \{\epsg^{-3/2}, \epsH^{-3}\}} $ & --- \\ 
		\hline
	\end{tabular}
	}
	\caption[Worst-case Complexity Comparison for Second-order Methods]{This table provides a comparison among a representative selection of second-order methods in terms of both the worst-case iteration complexity (overall iterations of the main algorithm) and the operation complexity (overall gradient and Hessian-vector product evaluations) for reaching the first- and second-order approximate optimality. The TR$\epsilon$ and LS$\epsilon$ refer to Newton-type methods whose underlying sub-problems contain an $\epsH$-dependent perturbation of the Hessian matrix, i.e., $ \HH + 2\epsH\eye$.}\label{table:complexity}
\end{table}

The rest of this paper is organized as follows. We end this section by introducing some definitions and notation used in this paper, as well as a a brief review of the complexity guarantees for some related works. In \cref{sec:MINRES_main}, we first discuss descent properties of the directions obtained from MINRES. Subsequently, we study in details the complexity of MINRES for obtaining such directions. We then introduce our Newton-MR variants in \cref{sec:NewtonMR}, accompanied by their convergence guarantees and complexity analysis. We then evaluate the empirical performance of our proposed algorithms in \cref{sec:exp}. Conclusions and further thoughts are gathered in \cref{sec:conc}.  To be self-contained, we bring some algorithmic details of  MINRES as well as its relevant properties in \cref{sec:MINRES_review}.

\subsubsection*{Notation and Definitions}
\label{sec:notation}
Throughout the paper, vectors and matrices are denoted by bold lower-case and bold upper-case letters, respectively, e.g., $ \vv $ and $ \VV $. 
We use regular lower-case and upper-case letters to denote scalar constants, e.g., $ \dn $  or $ L $. 
For a real vector, $ \vv $, its transpose is denoted by $ \vv^{\T} $. 
For two vectors $ \vv,\yy $, their inner-product is denoted by $ \dotprod{\vv, \yy}  = \vv^{\T} \yy $. For a vector $\vv$ and a matrix $ \VV $, $ \|\vv\| $ and $ \|\VV\| $ denote vector $ \ell_{2} $ norm and matrix spectral norm, respectively. 
The subscripts denote the iteration counter for the Newton-MR algorithm, while we use superscripts to denote the iterations of MINRES. For example, $ \sskt $ refers to the $ t
\th $ iterate of MINRES using $ \nabla f(\xxk) $ and $ \nabla^{2} f(\xxk)$ where $ \xxk $ denotes the $ k\th $ iterate of Newton-MR. 
A $ ^{[2]} $ as superscripts, e.g., $ \gamma_{t}^{[2]} $, represents the second edition of the element in MINRES iteration $ t $.
For notational simplicity, we use $ \gg(\xx) \triangleq \nabla f\left(\xx\right) \in \real^{\dn} $ and $ \HH(\xx) \triangleq \nabla^{2} f\left(\xx\right) \in \real^{\dn \times \dn} $ for the gradient and the Hessian of $ f $ at $ \xx $, respectively. At times, we may drop their dependence on $ \xx $ and/or the subscript, e.g., $ \gg = \ggk = \gg(\xxk) $ and $ \HH = \HHk = \HH(\xxk) $. 
For any $ t \geq 1 $, the Krylov sub-space of degree t generated using $ \ggk $ and $ \HHk $ is denoted by $\mathcal{K}_{t}(\HHk, \ggk)$. We also denote $ \rrkt = -\ggk - \HHk \sskt $ to be the residual corresponding to $ \sskt $. When the context is clear, we may also drop the dependence of MINRES iterates on $ \xxk $ and use $ \sst $ in lieu of $ \sskt $, and $ \rrt $ in lieu of $ \rrkt $.
%
%
Given two sets $ \mathcal{A} $ and $\mathcal{B}$, $ \mathcal{A} \setminus \mathcal{B}$ denotes the set subtraction $ \mathcal{A}  - \mathcal{B}  = \mathcal{A}  \cap \mathcal{B}^{\complement}$. The cardinality of a set $\mathcal{A}$ is denotes by $ |\mathcal{A}| $. Natural logarithm is denoted by $ \log(.) $. Logarithmic factors in the ``big-O'' notation are represented by $ \tbigO{.} $.

Clearly, the interplay between $ \HH $ and $ \gg $ is the main underlying factor affecting the overall convergence behavior of iterative methods based on the Krylov sub-space $ \Klvt(\HH,\gg) $. This interplay is entirely encapsulated in the notion of the grade of $ \gg $ with respect to $ \HH $; see \cite{saad2003iterative} for more details.
\begin{definition}[Grade of $ \gg $ w.r.t.\ $ \HH $]
	\label{def:grade}
	The grade of $ \gg $ with respect to $ \HH $ is the positive integer $ g \in \mathbb{N} $ such that 
	\begin{align*}
		\text{dim}\left(\mathcal{K}_{t}(\HH, \gg)\right) = \begin{cases}
			t, &\quad t \leq g, \\
			g, &\quad t > g.
		\end{cases}
	\end{align*}
\end{definition}

Since the Hessian matrix encodes information about the geometry of the optimization landscape, \cref{def:NPC} describes what is often referred to as NPC direction. This simply amounts to any direction that lies in the eigenspace corresponding to the nonpositive eigenvalues of $ \HH $.

\begin{definition}[Nonpositive Curvature direction]
	\label{def:NPC}
	Any non-zero $ \zz \in \real^{\dn} $ for which $ \dotprod{\zz, \HH \zz} \leq 0 $ is called a nonpositive curvature (NPC) direction.
\end{definition}

Under non-convex settings, we often seek to obtain solutions that satisfy certain approximate local optimality. Throughout this paper, we make use of the following popular notion of approximate optimality.
\begin{definition}[$(\varepsilon_g, \varepsilon_H)$-Optimality]
	\label{def:opt} 
	Given $\left(\varepsilon_g, \varepsilon_H \right) \in (0,1) \times (0,1)$, a point $\xx \in \real^{\dn} $ is an $(\varepsilon_g, \varepsilon_H)$-optimal solution to the problem~\eqref{eq:obj}, if 
	\begin{subequations}
		\label{eq:termination_second_order}  
		\begin{align}
			\vnorm{\gg(\xx)} &\le \epsg, \label{eq:small_g}\\
			\lambda_{\min} (\HH(\xx)) &\ge -\epsH. \label{eq:small_eig}
		\end{align}
	\end{subequations}
\end{definition}

\subsubsection*{Complexity for Non-convex Second-order Algorithms}
For optimization of \cref{eq:obj}, and in order to obtain approximate first-order optimality \cref{eq:small_g}, the iteration complexity of the classical Newton's method, when iterations are well-defined, has shown to be of the same order as that of steepest descent, namely $ \mathcal{O}(\epsg^{-2}) $ \cite{cartis2010complexity}.  In fact, an inexact trust-region method, which ensures at least a Cauchy (steepest-descent-like) decrease at each iteration, is shown to be of the same order \cite{gratton2008recursiveI,gratton2008recursiveII,blanchet2016convergence,gratton2017complexity,cartis2012complexity}. Recent non-trivial modifications of the classical trust-region methods have also been proposed which improve upon the iteration complexity to $ \mathcal{O}(\epsg^{-3/2}) $ \cite{curtis2017trust,curtis2017inexact,curtis2022worst}. These bounds can be shown to be tight \cite{cartis2010complexity} in the worst case. From the worst-case complexity point of view, cubic regularization methods have generally a better dependence on $ \epsg $ compared to the trust region algorithms. More specifically, \cite{nesterov2006cubic} showed that, under global Lipschitz continuity assumption on the Hessian, if the sub-problem is solved exactly, then the resulting cubic regularization algorithm achieves \cref{eq:small_g} with an iteration complexity of $ \mathcal{O}(\epsg^{-3/2})$. These results were extended by the seminal works of \cite{cartis2011adaptiveI,cartis2011adaptiveII} to an adaptive variant. In particular, the authors showed that the worst case complexity of $\mathcal{O}(\epsg^{- 3 / 2})$ can be achieved without requiring the knowledge of the Hessian's Lipschitz constant, access to the exact Hessian, or multi-dimensional global optimization of the sub-problem. These results were further refined in \cite{cartis2012complexity} where it was shown that, not only, multi-dimensional global minimization of the sub-problem is unnecessary, but also the same complexity can be achieved with mere one or two dimensional search. This $ \mathcal{O}(\epsg^{-3/2})$ bound has been shown to be tight \cite{cartis2011optimal}. 

To achieve approximate second-order optimality \cref{eq:termination_second_order}, iteration complexity bounds in the orders of $ \mathcal{O}(\max\{\epsH^{-1}\epsg^{-2}, \epsH^{-3}\}) $ and $ \mathcal{O}(\max\{\epsg^{-3}, \epsH^{-3}\}) $ have been obtained in the context of trust-region methods in \cite{cartis2012complexity} and \cite{grapiglia2016worst}, respectively. Similar bounds were also given in \cite{gratton2017complexity} under probabilistic model. Bounds of this order have shown to be optimal with certain choices of $ \epsg $ and $ \epsH $ \cite{cartis2012complexity}. For cubic regularization methods, \cite{cartis2012complexity} showed that at most $ \mathcal{O}(\max\{\epsg^{-2}, \epsH^{-3}\}) $ iterations is required to obtain \cref{eq:termination_second_order}. With further assumptions on the inexactness of the sub-problem solution, \cite{cartis2011adaptiveII,cartis2012complexity} also show that one can achieve the iteration complexity $ \mathcal{O}(\max\{\epsg^{-3/2}, \epsH^{-3}\}) $, which is shown to be tight for any choice of $ \epsg $ and $ \epsH $ \cite{cartis2010complexity}. Variants of adaptive cubic regularization under inexact function, gradient, and or Hessian information have been shown to achieve the same convergence rate \cite{xuNonconvexTheoretical2017,yao2018inexact,tripuraneni2017stochastic,kohler2017subsampledcubic,bellavia2021quadratic,bellavia2022adaptive}. Related algorithms and extensions with similar complexity guarantees have also been studied \cite{birgin2017use,martinez2017cubic,jiang2022cubic,birgin2017worst,bellavia2021adaptive,bellavia2019adaptive}.

Beyond trust-region and cubic-regularization frameworks, and most related to our work here, several line-search based methods have also been recently shown to achieve similar first and second-order complexity guarantees \cite{royer2020newton,royer2018complexity,royer2022nonlinear}. Better dependence on optimality tolerance can be obtained if one assumes additional structure, such as invexity \cite{roosta2018newton,liu2021convergence}.
In addition to iteration complexity, the computational cost of each iteration has also been considered in several recent works, e.g., \cite{royer2020newton,royer2018complexity,agarwal2017finding,yao2021inexact,curtis2021trust}. Taking $ \epsH^{2} = \epsg = \varepsilon  $, the overall operation complexities in these works achieve the rate of $ \tilde{\mathcal{O}}(\epsg^{-7/4}) $.

\section{MINRES: Newton-MR's Workhorse}
\label{sec:MINRES_main}
Recall that the original variant of Newton-MR, as studied in \cite{roosta2018newton}, relies on replacing the sub-problems of Newton-CG, i.e., the linear system in \cref{eq:linear_system}, with the ordinary least-squares formulation as
\begin{align}
	\label{eq:lieast-squares}
	\min_{\dd \in \real^{\dn}} \vnorm{\HHk \ss + \ggk}^{2}.
\end{align} 
In fact, the term ``MR'' refers to the sub-problems being in the form of \emph{\textbf{M}inimum \textbf{R}esidual}, i.e., least-squares. While there is a plethora of algorithms for approximately solving \cref{eq:lieast-squares}, it turns out that MINRES offers a catalog of beneficial properties that play a central role in optimization of \cref{eq:obj}. Among them, the most immediately relevant properties are those which guarantee descent for \cref{eq:obj} and are discussed in \cref{sec:MINRES_Descent}. Further applicable properties are gathered in \cref{sec:MINRES_property}.

\begin{algorithm}
	\caption{\textbf{MINRES($\HH,\gg,\eta$)}}
	\begin{algorithmic}[1]
		\label{alg:MINRES}
		\STATE \textbf{Input:} Hessian $ \HH $, gradient $\gg$, and inexactness tolerance $\eta > 0$
		\vspace{1mm}
		\STATE $ \phi_0 = \tbeta_1 = \vnorm{\gg} $, $ \rr_{0} = -\gg $, $ \vv_1 = \rr_{0} /\phi_0 $, $ \vv_0 = \ss_0 = \ww_0 = \ww_{-1} = \zero $,
		\vspace{1mm}
		\STATE $ s_0 = 0 $, $ c_0 = -1 $, $ \delta_1 = \tau_0 = 0 $, $ t = 1 $, $ \dt = \text{`SOL'}$,
		\vspace{1mm}
		\WHILE { \text{True} }
		\vspace{1mm}
		\STATE $ \qq_{t} = \HH \vv_{t}$, $\talpha_{t} = \vv^{\intercal}_{t} \qq_{t} $, $ \qq_{t} = \qq_{t} - \tbeta_{t} \vv_{t-1} $, $ \qq_{t} = \qq_{t} - \talpha_{t} \vv_{t} $, $ \tbeta_{t+1} = \vnorm{\qq_{t}} $
		\vspace{2mm}
		\STATE  $\displaystyle 
		\begin{bmatrix}
			\delta^{[2]}_{t} & \epsilon_{t+1} \\
			\gamma_{t} & \delta_{t+1}
		\end{bmatrix} = \begin{bmatrix}
			c_{t-1} & s_{t-1} \\
			s_{t-1} & -c_{t-1}
		\end{bmatrix} \begin{bmatrix}
			\delta_{t} & 0 \\
			\talpha_{t} & \tbeta_{t+1}
		\end{bmatrix}$
		\vspace{2mm}
		\IF{$ c_{t-1} \gamma_{t} \geq 0 $}  \label{alg:MINRES:NPC}
		\vspace{1mm}
		\STATE $ \dt = \text{`NPC'}$
		\vspace{1mm}
		\RETURN $ \rr_{t-1} $, $ \dt $.
		\vspace{1mm}
		\ENDIF
		\vspace{1mm}
		\IF{$ \phi_{t-1} \sqrt{\gamma_{t}^{2} + \delta_{t+1}^{2}} \leq \eta \sqrt{\phi_{0}^{2} - \phi_{t-1}^{2}} $} \label{alg:MINRES:SOL}
		\vspace{1mm}
		\STATE $ \dt = \text{`SOL'}$
		\vspace{1mm}
		\RETURN $ \ss_{t-1} $, $ \dt $
		\vspace{1mm}
		\ENDIF
		\vspace{1mm}
		\STATE $ \gamma_{t}^{[2]} = \sqrt{\gamma_{t}^2 + \tbeta_{t+1}^2} $,
		\vspace{1mm}
		\IF{ $ \gamma^{[2]}_{t} \neq 0 $ }
		\vspace{1mm}
		\STATE $c_{t} = \gamma_{t} / \gamma_{t}^{[2]} $, $ s_{t} = \tbeta_{t+1} / \gamma_{t}^{[2]} $, $ \tau_{t} = c_{t} \phi_{t-1} $, $ \phi_{t} = s_{t} \phi_{t-1} $, 
		\vspace{1mm}
		\STATE $ \ww_{t} = \left(\vv_{t} - \delta^{[2]}_{t} \ww_{t-1} - \epsilon_{t} \ww_{t-2} \right) / \gamma^{[2]}_{t} $, $ \ss_{t} = \ss_{t-1} + \tau_{t} \ww_{t} $
		\vspace{1mm}
		\IF{$ \tbeta_{t+1} \neq 0 $}
		\STATE $ \vv_{t+1} = \qq_{t} / \tbeta_{t+1} $, $ \rr_{t} = s_{t}^2 \rr_{t-1} - \phi_{t} c_{t} \vv_{t+1} $,
		\ENDIF
		\ELSE
		\vspace{1mm}
		\STATE $ c_{t} = 0 $, $ s_{t} = 1 $, $ \tau_{t} = 0 $, $ \phi_{t} = \phi_{t-1} $, $ \rr_{t} = \rr_{t-1} $, $ \ss_{t} = \ss_{t-1} $,  
		\vspace{1mm}
		\ENDIF
		\vspace{1mm}
		\STATE $ t \leftarrow t+1 $,
		\vspace{1mm}
		\ENDWHILE
	\end{algorithmic}
\end{algorithm}

\subsection{Descent Properties}
\label{sec:MINRES_Descent}
For solving \cref{eq:lieast-squares}, recall that the $ t\th $ iteration of MINRES, detailed in \cref{alg:MINRES},  yields a direction, $ \sskt $, which satisfies
\begin{align}
	\label{eq:MINRES}
	\sskt = \argmin_{\ss \in \mathcal{K}_{t}(\HHk, \ggk)}  \vnorm{\HHk \ss + \ggk}^{2}, \quad t = 1,2,\ldots,
\end{align}
where $ \mathcal{K}_{t}(\HHk, \ggk) = \text{Span}\{\ggk,\HHk \ggk, \ldots, \left[\HHk\right]^{t-1} \ggk \}$ denotes the Krylov sub-space of order $ t $ \cite{golub2013matrix}.
Suppose $ \ggk \neq \zero $ (we will treat the case for saddle points later). Since $ \bm{0} \in \mathcal{K}_{t}(\HHk, \ggk)$, it follows that 
\begin{align*}
	\dotprod{\sskt,\HHk \ggk} \leq - \vnorm{\HHk \sskt}^{2}/2 < 0, \quad t = 1,2,\ldots,
\end{align*} 
as long as $ \sskt \notin \Null(\HHk) $ (which can be guaranteed if, for example, $ \ggk \notin \Null(\HHk) $).
In other words, $ \sskt$ from MINRES, for all $ t \geq 1 $, is a descent direction for $ \vnorm{\gg(\xx)}^{2} $ at $ \xxk $. 
This observation forms the basis of the plain Newton-MR variant considered in \cite{roosta2018newton}. Due to such descent property, once $ \sskt $ satisfies a certain condition, the iterations of MINRES are terminated, and the update direction $ \ddk = \sskt $ is used within an Armijo-type line-search to obtain a step-size, guaranteeing decrease in $ \vnorm{\gg(\xx)}^{2} $.

By its construction, the iterates of such a na\"{i}ve variant of Newton-MR converge to the zeros of the gradient field, $ \gg(\xx) $. However, unless the objective function $ f(\xx) $ is invex \cite{mishra2008invexity}, such points include saddle points or even (local) maxima. 
To design a new Newton-MR variant for more general non-convex optimization, a natural question perhaps is ``\emph{when can MINRES generate a descent direction for the objective function, $ f(\xx) $?}''. This question has very recently been thoroughly studied in \cite{liu2022minres}. The answer is based on the inherent ability of MINRES to detect NPC directions, if they exist, and to otherwise provide a certificate for positive semi-definiteness of the Hessian matrix. In fact, MINRES provides two complementary mechanisms to generate descent directions for $ f(\xx) $: one prior to the detection of non-positive curvature and one based on the detected NPC direction.

\subsubsection*{Prior to Detecting NPC}
As long as no NPC direction has been detected, any iterate of MINRES is guaranteed to yield descent for $ f(\xx) $ (\cref{lemma:MINRES_NPC_detector,lemma:sTr}). For such iterates, we have
\begin{align*}
	\dotprod{\sskt, \ggk} < 0, \quad \text{and} \quad \dotprod{\sskt, \HHk \ggk} < 0.
\end{align*}
For example, if $ \HHk \succ \zero$, all iterates can be used to obtain descent, not only in the norm of its gradient, but also in the objective function itself. 
As a result, prior to the detection of any NPC direction, if a certain inexactness condition is met, the MINRES iterations can be terminated, returning an inexact solution as an update direction.
\begin{SOL*}
	The most widely used termination criterion for the sub-problem is that based on the residual, i.e., we terminate when $ \|\rrkt\| \leq \eta $ for some $ \eta > 0 $. However, in non-convex, if $ \ggk \notin \range(\HHk) $, then $\|\rrkt\|$ is lower bounded by $ \|(\eye - \HHk\HHdk)\ggk\| $, which is a priori unavailable. Hence, the acceptable range for $ \eta $ relies on an unavailable lower-bound, which is often unfortunately overlooked or ignored in many related works, making this approach rather impractical. In \cref{alg:MINRES}, at any iteration $ t $, we instead check the inexactness condition 
	\begin{align}
		\label{eq:inexactness}
		\vnorm{\HHk \rrktp} \leq \eta \vnorm{\HHk \ssktp},
	\end{align}
	for some $ \eta > 0 $ with the iterate from the \emph{previous} iteration, and return $ \ssktp$ if \cref{eq:inexactness} is satisfied. Note that, in \cref{alg:MINRES}, $ \|{\HHk \rrkt}\| $ is decreasing to zero while $ \|{\HHk \sskt}\| $ is monotonically increasing from zero to $ \|{\HHk \HHdk \ggk}\| $ \cite[Lemma 3.11]{liu2021convergence}, and hence for any $ \eta > 0 $, the inexactness condition \cref{eq:inexactness} will always be satisfied at some iteration. 
	Furthermore, \cref{eq:inexactness} can be verified without any additional matrix vector products; see \hyperref[alg:MINRES:SOL]{Step 7} of \cref{alg:MINRES} as well as \cref{eq:Hs}.
\end{SOL*}

\subsubsection*{After Detecting NPC} 
Once a NPC direction arises for the first time, say at iteration $ t $, the subsequent iterates cease to enjoy a descent guarantee for $ f $, and may in fact even yield ascent (although they always provide descent for $ \vnorm{\gg(\xx)}^{2} $). However, it turns out that the detected NPC direction, $ \rrktp $, is itself naturally a descent direction for $ f(\xx) $; see \cref{eq:rTg} in \cref{lemma:MINRES_properties}. In addition, $ \rrktp $ forms an interesting angle with the gradient of $ \vnorm{\gg(\xx)}^{2} $ at $ \xxk $; see \cref{eq:rTHr_conjugated} in \cref{lemma:MINRES_properties}. Indeed, with such an NPC direction, we have
\begin{align*}
	\dotprod{\rrktp, \ggk} < 0, \quad \text{and}  \quad 
	\left\{\begin{array}{lll}
		\dotprod{\rr_{k}^{(0)}, \HHk \ggk} \leq 0,& &  \text{(if NPC is detected at $t = 1$)}\\
		\\
		\dotprod{\rrktp, \HHk \ggk} = 0,&  & \text{(if NPC is detected at $t > 1$)}
	\end{array}\right..
\end{align*} 

\begin{NPC*}
	To determine whether or not a NPC direction is available at iteration $ t $ for MINRES, it suffices to monitor the condition 
	\begin{align}
		\label{eq:NPC}
		\dotprod{\rrktp, \HHk \rrktp} \leq 0,
	\end{align}
	where $ \rrktp = -\ggk - \HHk \ssktp $ is the residual vector of the \emph{previous} iteration. It also turns out that the NPC condition \cref{eq:NPC} can be readily verified as part of the MINRES iterations without explicitly computing $ \HHk \rrktp $; see \hyperref[alg:MINRES:NPC]{Step 11} of \cref{alg:MINRES}, as well as \cref{eq:NPC_cond,lemma:MINRES_properties}. In fact, the existence of a NPC direction for $ \HHk $ at iteration $ t $ is equivalent to the NPC condition \cref{eq:NPC}; see \cref{lemma:MINRES_NPC_detector}.
\end{NPC*}
\begin{remark}[Contrast Between NPC Detection in MINRES and Capped CG of \cite{royer2020newton}]
	Within the Capped CG procedure, \cite[Algorithm 1]{royer2020newton}, if a NPC direction does not arise naturally, an additional test is employed to extract such a direction. This is done by comparing the observed convergence rate of the residual norm of the iterates with the theoretical one under the positive definiteness assumption. A slower convergence rate than what is theoretically expected serves as an indication that the underlying matrix contains non-positive eigenvalues. This is then exploited to extract a NPC direction. Similar approaches are also used in \cite[Lemma 3.1, Lemma 3.2]{curtis2021trust} for the Truncated CG solver within the trust region framework. In contrast, the NPC directions always arise naturally within the MINRES iterations, as soon as they become available; see \cref{lemma:MINRES_NPC_detector}. In this light, monitoring the NPC condition \cref{eq:NPC} is necessary and sufficient to detect, or to otherwise rule out, the existence of directions.
\end{remark}

As discussed above, one can always construct a descent direction within the MINRES iterations. Indeed, if the inexactness condition \cref{eq:inexactness} is satisfied for $ \ssktp $ before the NPC condition \cref{eq:NPC} is detected, we let $ \ddk = \ssktp $. Otherwise, once the NPC direction is detected at iteration $ t $, we set $ \ddk = \rrktp $. In either case, we are guaranteed that the search direction $ \ddk $ satisfies
\begin{align}
	\label{eq:sufficient_decrease_dTg}
	\dotprod{\ddk, \ggk} < 0, \quad \text{and} \quad \dotprod{\ddk, \HHk \ggk} \leq 0.
\end{align}

\begin{remark}[Contrast Between CG and MINRES Directions]
	One can make several intriguing observations about the quality of the search directions obtained from MINRES in comparison with those from CG.
	\begin{itemize}
		\item Just as is the case for the classical CG, the output of the Capped CG procedure in \cite{royer2020newton} can be a direction of ascent for $ \vnorm{\gg}^{2} $. This could result in a, rather, chaotic trajectory for the iterates of Newton-CG in terms of the magnitude of the gradient. This is in fact reminiscent of the non-monotonic behavior of the residuals during the iterations of CG for solving a positive definite linear system. 
		In contrast, by constructing a search direction from MINRES as in \cref{eq:sufficient_decrease_dTg}, we obtain a direction that not only implies descent in the function, $ f(\xx) $, but it also forms a ``friendly'' angle with the direction of the greatest decrease of $\vnorm{\gg(\xx)}^{2}$.     
		
		\item   Recall that the $ t\th $ iteration of CG is equivalent to the formulation
		\begin{align*}
			\argmin_{\ss \in \mathcal{K}_{t}(\HHk, \ggk)} \dotprod{\ggk, \ss}  + \hf \dotprod{ \ss, \HHk \ss}.
		\end{align*}
		As long as non-positive curvature of $ \HHk $ has not been encountered, any solution $ \sskt $ to above optimization problem is guaranteed to satisfy $ \langle \ggk, \sskt \rangle  \leq -\langle \sskt, \HHk \sskt\rangle/2 \leq 0 $, and is hence a descent direction. 
		Similarly, as discussed above, any iterate of MINRES prior to the detection of non-positive curvature, provides the same guarantee. However, as shown in \cite{liu2022minres} (see also \cref{lemma:sTr}), for such an iterate, we also have $ \langle \ggk, \sskt \rangle  \leq -\langle \sskt, \HHk \sskt \rangle \leq 0 $.
		In other words, the directions from MINRES could yield sharper descent as compared with CG.
	\end{itemize}
\end{remark}

\subsection{Complexity Analysis}
\label{sec:MINRES_complexity}
In this section, we study the complexity of MINRES in obtaining a direction satisfying the conditions \cref{eq:inexactness} and \cref{eq:NPC}. For notational simplicity, we drop the dependence of $ \gg(\xxk) $ and $ \HH(\xxk) $ on $ \xxk $ and denote $ \gg = \ggk = \gg(\xxk) $ and $ \HH = \HHk = \HH(\xxk) $. Subsequently, we use $ \sst $ and $ \rrt $ to denote $ \sskt $ and $ \rrkt  $, respectively.

It turns out that the interplay between $ \HH $ and $ \gg $ is paramount in establishing the complexity of MINRES. For example, if $ \gg $ lies entirely in some eigen-space corresponding to certain eigenvalues of $ \HH $, the induced Krylov sub-space will never contain any of the remaining eigenvectors of $ \HH $. In other words, we only need to consider the portion of the spectrum of $ \HH $ that is relevant to $ \gg $.
\begin{definition}[$\gg$-Relevant Eigenvalues of $ \HH $]
	\label{def:eigpairs}
	Consider the set of eigenvalues of $ \HH $, i.e., $ \Theta(\HH) \defeq \left\{\lambda \in \real \mid \det(\HH - \lambda \eye) = 0\right\} $ and recall the eigenspace corresponding to an eigenvalue $ \lambda \in \Theta(\HH) $, i.e., $ \mathcal{E}_{\lambda}(\HH) \defeq \Null(\HH - \lambda \eye) $. The set of $\gg$-relevant eigenvalues of $ \HH $ is defined as 
	\begin{align*}
		\Psi(\HH, \gg) \defeq \left\{\lambda \in \Theta(\HH) \mid \gg \not\perp \mathcal{E}_{\lambda}(\HH)\right\}.
	\end{align*}
\end{definition}
%
In other words, $ \Psi(\HH, \gg) $ is the set of all eigenvalues of $ \HH $ whose associated eigenvectors are not orthogonal to $ \gg $.
Let $ \psi \defeq |\Psi(\HH, \gg)| = \psip + \psin + \psiz$ where $ \psip $, $\psin$, and $\psiz$, respectively, denote the number of positive, negative, and zero $\gg$-relevant eigenvalues of $ \HH $ (clearly, $ \psiz \in \{0,1\} $). 
Note that $ \Theta(\HH) \setminus \Psi(\HH, \gg) $ may contain positive, negative, and/or zero eigenvalues.
Without loss of generality, we label the $\gg$-relevant eigenvalues in $ \Psi(\HH, \gg) $ such that $ \lambda_1 > \lambda_2 > \ldots > \lambda_{\psip} > 0$, and  $0 > \lambda_{\psip+\psiz+1} > \ldots > \lambda_{\psi} $. 
%

For a given $ \lambda_{i} \in \Psi(\HH,\gg), \; 1 \leq i \leq \psi $, let $ \dim(\mathcal{E}_{\lambda_{i}}(\HH)) = \mi $ where $ 1 \leq \mi \leq d $. If $\mi > 2$, we can always consider a basis of $ \mathcal{E}_{\lambda_{i}}(\HH) $ for which $ \gg $ has a non-zero projection onto \emph{only one} of the basis vectors, i.e., for any orthonormal basis $ \WW_{i} \in \real^{\dn \times \mi} $ of $\mathcal{E}_{\lambda_{i}}(\HH)$, we can consider the unit eigenvectors given by $ \uu_{i}^{\{1\}} = \WW_{i}\WW_{i}^{\T} \gg/\vnorm{\WW_{i}\WW_{i}^{\T} \gg} $ and its non-zero orthogonal complements $ \uu_{i}^{\{2\}}, \ldots, \uu_{i}^{\{\mi\}} $ in $\mathcal{E}_{\lambda_{i}}(\HH)$. Using $ \uu_{i} $ instead of $ \uu_{i}^{\{1\}} $ for notational simplicity, we define 
\begin{subequations}
	\label{eq:H_decomp_PSD} 
	\begin{align}
		\label{eq:U_i}
		\UU_{i} &= \begin{bmatrix}
			\uu_{i} & \uu_{i}^{\{2\}} & \cdots & \uu_{i}^{\{\mi\}}
		\end{bmatrix}, \quad i = 1,\ldots,\psi.
	\end{align} 
	Putting the above all together, we will consider the full eigenvalue decomposition of $ \HH $ as 
	\begin{align}
		\HH &= \begin{bmatrix}
			\UU & \UUp
		\end{bmatrix} \begin{bmatrix}
			\Lambda & \\
			& \Lambda_{\perp}
		\end{bmatrix} \begin{bmatrix}
			\UU & \UUp
		\end{bmatrix}^{\T},
	\end{align}
	where
	\begin{align}
		\UU &= \begin{bmatrix}
			\UU_{+} & \UU_{-} 
		\end{bmatrix}, \quad \Lambda = \begin{bmatrix}
			\Lambda_{+} & \\
			& \Lambda_{-} 
		\end{bmatrix},
	\end{align}
	and $ \Lambda_{+} $ and $ \Lambda_{-}, $ are the diagonal matrices respectively containing the strictly positive and strictly negative $\gg$-relevant eigenvalues in $ \Psi(\HH, \gg) $ (with their multiplicities), with $ \UU_{+} $ and $ \UU_{-} $ being their corresponding eigenvectors as discussed above, i.e., 
	\begin{align}
		\label{eq:U_p}
		\UU_{+} = \begin{bmatrix}
			\UU_{1} & \UU_{2} & \cdots & \UU_{\psip},
		\end{bmatrix}, \quad \UU_{-} = \begin{bmatrix}
			\UU_{\psip+\psiz+1} & \UU_{\psip+\psiz+1} & \cdots & \UU_{\psip},
		\end{bmatrix}
	\end{align}
	where $ \UU_{i} $ is as in \cref{eq:U_i}. 
\end{subequations}
Also, $ \Lambda_{\perp} $ contains all the remaining eigenvalues of $ \HH $ (with multiplicities) with the corresponding eigenvectors collected in $ \UUp $. It follows that the decomposition of $ \gg $ in the basis $\begin{bmatrix} \UU & \UUp \end{bmatrix}$ is given by 
\begin{align}
	\label{eq:H_lambda_U}
	\gg = \begin{bmatrix}
		\UU & \UUp
	\end{bmatrix} \begin{bmatrix}
		\UU & \UUp
	\end{bmatrix}^{\T} \gg = \sum_{i=1}^{\psi} \dotprod{\uu_{i},\gg} \uu_{i},
\end{align}
where $ \uu_{i} $ is as in \cref{eq:U_i}.

Let us also define a few quantities related to \cref{eq:H_decomp_PSD}, which we use in the proofs of this paper. For some $ 1 \leq i \leq \psip $ and $ \psip+\psiz+1 \leq j \leq \psi $, we partition $ \Lambda_{+} $, $ \UU_{+} $, $ \Lambda_{-} $, and $ \UU_{-} $ in \cref{eq:H_decomp_PSD} as
\begin{subequations}
	\label{eq:H_decomp_PSD_ij}
	\begin{align}
		\Lambda_{+} &= \begin{bmatrix}
			\Lambda_{i+} & \\
			& \Lambda_{i+}^{\complement}
		\end{bmatrix}, \quad \UU_{+} = \begin{bmatrix}
			\UU_{i+} & \UU_{i+}^{\complement}
		\end{bmatrix} \\
		\Lambda_{-} &= \begin{bmatrix}
			\Lambda_{j-}^{\complement} & \\
			& \Lambda_{j-}
		\end{bmatrix}, \quad \UU_{-} = \begin{bmatrix}
			\UU_{j-}^{\complement} & \UU_{j-}
		\end{bmatrix}.
	\end{align}
\end{subequations}
In other words, $ \Lambda_{i+} $ is the diagonal sub-matrix of $ \Lambda_{+} $ in \cref{eq:H_decomp_PSD} associated with the largest positive $\gg$-relevant eigenvalues $ \lambda_{1} \geq \lambda_{1} \geq \ldots \geq \lambda_{i} $. The corresponding columns of $ \UU_{+} $ are gathered in $ \UU_{i+} $. Similarly, $ \Lambda_{j-} $ is the diagonal sub-matrix of $ \Lambda_{-} $ in \cref{eq:H_decomp_PSD} associated with the most negative $\gg$-relevant eigenvalues $ \lambda_{j} \geq \ldots \geq \lambda_{\psi} \geq \lambda_{\psi} $, with $ \UU_{j-} $ containing the corresponding eigenvectors.

\cref{lemma:Lanczos_grade} establishes a crucial connection between the grade of $ \gg $ with respect to $ \HH $ defined in \cref{def:grade} and the cardinality of $ \Psi(\HH,\gg) $.
\begin{lemma}
	\label{lemma:Lanczos_grade}
	We have $ g = \psip + \psin + \psiz $, where $ g $ be the grade of $ \gg $ with respect to $ \HH $, and $ \psip $, $\psin$, and $\psiz$, respectively, denote the number of positive, negative, and zero $\gg$-relevant eigenvalues of $ \HH $.
\end{lemma}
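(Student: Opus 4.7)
The plan is to reduce the computation of $\dim\!\left(\mathcal{K}_{t}(\HH,\gg)\right)$ to a rank calculation for a Vandermonde-type matrix built from the $\gg$-relevant eigenvalues, and then exploit the fact that these eigenvalues are, by construction, distinct.

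First, I would use the eigen-decomposition \cref{eq:H_decomp_PSD}--\cref{eq:H_lambda_U} to expand $\gg = \sum_{i=1}^{\psi} \alpha_{i}\,\uu_{i}$ with $\alpha_{i} \defeq \dotprod{\uu_{i},\gg}$. By the construction of the basis $\uu_{1},\ldots,\uu_{\psi}$ (in particular, by choosing each $\uu_{i}$ to be the normalized projection of $\gg$ onto $\mathcal{E}_{\lambda_{i}}(\HH)$), together with the definition of $\gg$-relevance, every $\alpha_{i}$ is nonzero. Since $\HH \uu_{i} = \lambda_{i} \uu_{i}$, for any $k \geq 0$,
\begin{equation*}
\HH^{k}\gg \;=\; \sum_{i=1}^{\psi} \alpha_{i}\,\lambda_{i}^{k}\,\uu_{i},
\end{equation*}
so the Krylov iterates lie entirely in $\Span\{\uu_{1},\ldots,\uu_{\psi}\}$.

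Next, for a fixed $t$, I would express the vectors $\gg,\HH\gg,\ldots,\HH^{t-1}\gg$, in coordinates relative to the orthonormal system $\{\uu_{1},\ldots,\uu_{\psi}\}$, as the columns of a $\psi \times t$ matrix $M$ whose $(i,k)$-entry is $\alpha_{i}\,\lambda_{i}^{k-1}$. This factors as $M = D\,V$, where $D = \diag(\alpha_{1},\ldots,\alpha_{\psi})$ and $V$ is the Vandermonde matrix $V_{i,k} = \lambda_{i}^{k-1}$. Since all $\alpha_{i}$ are nonzero, $D$ is invertible, and $\rank(M) = \rank(V)$. The eigenvalues $\lambda_{1},\ldots,\lambda_{\psi}$ are pairwise distinct (including the case $\lambda_{i}=0$, since zero can appear at most once), so the standard Vandermonde argument gives $\rank(V) = \min(t,\psi)$. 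Consequently,
\begin{equation*}
\dim\!\left(\mathcal{K}_{t}(\HH,\gg)\right) \;=\; \rank(M) \;=\; \min(t,\psi).
\end{equation*}

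Comparing with \cref{def:grade} yields $g = \psi = \psip + \psin + \psiz$, as claimed. The only subtle point---and the part that needs the most care---is the construction of the basis vectors $\uu_{i}$ in eigenspaces of multiplicity greater than one, so that the coefficients $\alpha_{i}$ are guaranteed nonzero on the $\gg$-relevant part and zero on the $\gg$-irrelevant complement; this is already handled by the basis choice described around \cref{eq:U_i}, after which the Vandermonde rank argument is routine.
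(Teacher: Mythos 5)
Your proposal is correct and follows essentially the same route as the paper's proof: expand $\gg$ in the eigenbasis of $\HH$ restricted to the $\gg$-relevant eigenspaces, recognize that the Krylov iterates $\HH^{k}\gg$ factor through a Vandermonde matrix in the distinct eigenvalues $\lambda_{1},\dots,\lambda_{\psi}$ weighted by the nonzero coefficients $\alpha_{i}$, and conclude $g=\psi$ from nonsingularity of the Vandermonde block. The only cosmetic difference is that you track $\dim\mathcal{K}_{t}(\HH,\gg)=\min(t,\psi)$ for every $t$, whereas the paper directly exhibits that the $\psi$ vectors $\gg,\HH\gg,\dots,\HH^{\psi-1}\gg$ are linearly independent.
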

\begin{proof}
	The proof follows a similar line of reasoning as that in \cite[Theorem 2.6.2]{bjorck2015numerical}. 
	Let $\uu_{i}$ denote the eigenvector corresponding to $ \lambda_{i} \in \Psi(\HH,\gg) $ as described above. From \cref{eq:H_lambda_U}, for $ j\geq 0 $, we have 
	\begin{align*}
		\zz_{j} \defeq \HH^{j} \gg = \sum_{i=1}^{\psi} \lambda_{i}^{j} \dotprod{\uu_{i},\gg} \uu_{i},
	\end{align*}
	where we have defined $ 0^0 = 1 $.
	Denoting $ \yy_{i} \defeq \dotprod{\uu_{i},\gg} \uu_{i} $, we have
	\begin{align}
		\label{eq:vandermonde}
		\begin{bmatrix}
			\zz_{0} & \zz_{1} & \ldots & \zz_{\psi-1} 
		\end{bmatrix} = \overbrace{\begin{bmatrix}
				\yy_{1} & \yy_{2} & \ldots & \yy_{\psi} 
		\end{bmatrix}}^{\YY_{\psi}} \begin{bmatrix}
			1 & \lambda_{1} & \cdots & \lambda_{1}^{\psi-1}  \\
			1 & \lambda_{2} & \cdots & \lambda_{2}^{\psi-1} \\
			\vdots & \vdots & \vdots & \vdots \\
			1 & \lambda_{\psi} & \cdots & \lambda_{\psi}^{\psi-1}
		\end{bmatrix} 
	\end{align}
	Since $ \lambda_{i} \neq \lambda_{j},\; i \neq j $, the Vandermonde matrix is non-singular. The matrix $ \YY_{\psi} $ is also non-singular, and hence it follows that $ g = \psi = \psip + \psin + \psiz $. 
\end{proof}

Recall that, as part of MINRES, we have $ \TTt = \VVtT \HH \VVt $ where $ \TTt \in \real^{t \times t}$ is a symmetric tridiagonal matrix and  $ \VVt \in \real^{d \times t} $ is the basis obtained from the Lanczos process for $ \Klvt(\HH,\gg) $; see \cref{sec:MINRES_review} for more details. \cref{lemma:T_PSD} characterizes the implications of $ \psiz \in \left\{0,1\right\} $ on the structure of $ \TTg $.
\begin{lemma}
	\label{lemma:T_PSD}
	Let $ g $ be the grade of $ \gg $ with respect to $ \HH $.
	\begin{enumerate}[label = {\bfseries (\roman*)}]
		\item If $ \gg \in \range(\HH) $, i.e., $ \psiz = 0 $, then $ \TT_{g} $ is non-singular and $ \rr_{g} = \zero $. Furthermore, if the NPC condition \cref{eq:NPC} is never detected for any $ 1 \leq t \leq g $, then $ \TTg \succ \zero $.
		\item \label{lemma:T_PSD_ii} If $ \gg \notin \range(\HH) $, i.e., $ \psiz = 1 $, then $ \TT_{g} $ is singular. Furthermore, if the NPC condition \cref{eq:NPC} is first detected only at the very last iteration, then $ \TTg \succeq \zero $, and $ \rr_{g-1} = \rr_{g} $ is a zero curvature direction.
	\end{enumerate}
\end{lemma}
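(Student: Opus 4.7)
The overall plan is to first establish that, because $g$ is the grade, the Lanczos recursion terminates with $\tbeta_{g+1}=0$, so $\HH\VV_g=\VV_g\TT_g$ and $\Klv_g(\HH,\gg)$ is an $\HH$-invariant subspace of dimension $g$. Combining this invariance with \cref{lemma:Lanczos_grade} and the decomposition \cref{eq:H_lambda_U}, I will argue that each projection of $\gg$ onto the invariant line $\Span(\uu_i)$ must itself lie in $\Klv_g(\HH,\gg)$, whence $\Klv_g(\HH,\gg)=\Span(\uu_1,\ldots,\uu_\psi)$ and the spectrum of $\TT_g$ coincides with the full set of $\gg$-relevant eigenvalues $\{\lambda_1,\ldots,\lambda_\psi\}$. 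This spectral identification is the central tool for everything that follows.

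For part (i), non-singularity of $\TT_g$ is then immediate from $\psiz=0$. For $\rr_g=\zero$, I will use $\gg\in\range(\HH)$ together with the eigen-expansion of $\gg$ to exhibit the explicit candidate $\ss^{\star}=-\sum_{i=1}^{\psi}\dotprod{\uu_i,\gg}\uu_i/\lambda_i$, which lies in $\Klv_g(\HH,\gg)$ and satisfies $\HH\ss^{\star}+\gg=\zero$; since $\ss_g$ minimizes $\vnorm{\HH\ss+\gg}$ over $\Klv_g(\HH,\gg)$ by \cref{eq:MINRES}, it follows that $\rr_g=\zero$. For the ``furthermore'', \cref{lemma:MINRES_NPC_detector} tells us that the absence of any NPC detection across iterations $1,\ldots,g$ is equivalent to $\HH|_{\Klv_g(\HH,\gg)}$ being positive definite, which transfers directly to $\TT_g\succ\zero$ via the orthonormal basis $\VV_g$.

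Part (ii) begins analogously: $\psiz=1$ places $0$ in the spectrum of $\TT_g$, forcing singularity. Under the additional hypothesis, NPC-free behavior throughout iterations $1,\ldots,g-1$ yields $\TT_{g-1}\succ\zero$ by the same equivalence from \cref{lemma:MINRES_NPC_detector}. Since $\TT_{g-1}$ is the leading $(g-1)\times(g-1)$ principal submatrix of $\TT_g$, Cauchy interlacing forces at most one eigenvalue of $\TT_g$ to lie below the strictly positive minimum eigenvalue of $\TT_{g-1}$; combined with singularity, exactly one eigenvalue of $\TT_g$ is zero and the rest are strictly positive, giving $\TT_g\succeq\zero$. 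The zero-curvature claim for $\rr_g$ I plan to obtain from the normal equations of the MINRES sub-problem, which give $\HH\rr_g\perp\Klv_g(\HH,\gg)$, combined with $\HH$-invariance giving $\HH\rr_g\in\Klv_g(\HH,\gg)$, and therefore $\HH\rr_g=\zero$ and $\dotprod{\rr_g,\HH\rr_g}=0$.

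The remaining equality $\rr_{g-1}=\rr_g$ is what I expect to be the main obstacle. The cleanest route I have in mind is structural: it suffices to show $\HH\Klv_{g-1}(\HH,\gg)=\HH\Klv_g(\HH,\gg)$, since then $\HH\ss_{g-1}$ and $\HH\ss_g$ are both equal to the orthogonal projection of $-\gg$ onto the same subspace, forcing $\HH\ss_{g-1}=\HH\ss_g$ and hence $\rr_{g-1}=\rr_g$. A dimension count using $\Null(\HH)\cap\Klv_g(\HH,\gg)=\Span(\uu_{\psip+1})$ reduces this to the claim that $\uu_{\psip+1}\notin\Klv_{g-1}(\HH,\gg)$, which I will establish through a short Vandermonde/polynomial argument: any element of $\Klv_{g-1}(\HH,\gg)$ can be written as $p(\HH)\gg$ for some polynomial $p$ of degree at most $g-2$, and expansion in the eigenbasis would simultaneously require $p(0)\neq 0$ and $p(\lambda_i)=0$ for the $g-1$ distinct nonzero $\gg$-relevant eigenvalues $\lambda_i$, which is impossible.
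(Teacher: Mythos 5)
Your proof is correct, and it takes a genuinely different and more self-contained route than the paper's in several key places. The paper proves non-singularity/singularity of $\TT_g$ by citing \cite[Appendix~A, Theorem~3.2]{choi2011minres}, obtains the zero-curvature of $\rr_{g-1}$ from the observation $\gamma^{(2)}_g = 0 \Rightarrow \gamma^{(1)}_g = 0$ and invokes \cref{eq:NPC_curve}, establishes $\TT_g\succeq\zero$ by a contradiction argument built on the $\HH$-conjugacy of the residuals \cref{eq:rTHr_conjugated}, and then simply cites \cite[Remark~3.10-(ii)]{liu2022minres} for $\rr_{g-1}=\rr_g$. You instead derive everything internally from the spectral identification of $\TT_g$ with the $\gg$-relevant eigenvalues (via $\HH$-invariance of $\Klv_g(\HH,\gg)$ at the grade), replace the paper's conjugacy contradiction with a cleaner Cauchy interlacing argument (which, combined with singularity and $\TT_{g-1}\succ\zero$, simultaneously forces $\psin=0$ and $\TT_g\succeq\zero$), obtain the zero-curvature of $\rr_g$ from the normal equations $\HH\rr_g\perp\Klv_g(\HH,\gg)$ together with invariance so that $\HH\rr_g=\zero$, and prove $\rr_{g-1}=\rr_g$ by a dimension count showing $\HH\Klv_{g-1}(\HH,\gg)=\HH\Klv_g(\HH,\gg)$ (which reduces, via a short Vandermonde argument, to $\uu_{\psip+1}\notin\Klv_{g-1}(\HH,\gg)$). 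Your approach has the advantage of not relying on external references for the algebraic facts, and the interlacing argument is arguably more transparent than the paper's contradiction; the cost is that you must first carry out the spectral identification carefully. Both proofs make equivalent use of \cref{lemma:MINRES_NPC_detector}.
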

\begin{proof}   
	\hfill
	\begin{enumerate}[label = {\bfseries (\roman*)}]
		\item When $ \gg \in \range(\HH) $, we obviously have $ \rrg = \zero $. Also, by \cite[Appendix A]{choi2011minres}, we must have $ \TT_{g} $ is non-singular. Further, if the NPC condition \cref{eq:NPC} is never detected for any $ 1 \leq t \leq g $, then from \cref{lemma:MINRES_NPC_detector}, we also have $ \TT_{g} \succ \zero $.
		
		\item When $ \gg \notin \range(\HH) $, by \cite[Throem 3.2]{choi2011minres}, we have $ \TT_{g} $ is necessarily singular and $ \gamma^{[2]}_{g} = 0 $. This implies that $ \gamma_{g} = 0 $ and hence $ \rr_{g-1} $ must be a zero curvature direction, i.e., $ \dotprod{\rr_{g-1}, \HH \rr_{g-1}} =0 $. Since the NPC condition \cref{eq:NPC} is only detected for the first time at the last iteration, then by \cref{lemma:MINRES_NPC_detector} we have $ \TT_{g-1} \succ \zero$. But this, in turn, implies that $ \dotprod{\rr_{t}, \HH \rr_{t}} > 0$ for all $0 \leq t < g-1 $ (recall that $\rr_{t} \in \Klv_{g-1}(\HH, \gg), \; 0 \leq t \leq g-2 $). Now, assume $ \lambda_{\min}(\TT_{g}) < 0 $. Then, there must exist some non-zero vector $ \pp \in \Klv_{g}(\HH, \gg) = \Span\{\rr_{0},\rr_{1},\ldots, \rr_{g-1}\} $ such that $ \dotprod{\pp, \HH \pp} < 0 $. Let $ \xi_{1}, \ldots, \xi_{g-1} $ be scalars such that $ \pp = \sum_{i=0}^{g-1} \xi_{i} \rr_{i} $. Note that since $\dotprod{\rr_{g-1}, \HH \rr_{g-1}} = 0$, at least one of $ \xi_{1}, \ldots, \xi_{g-2} $ must be non-zero. Otherwise $ \pp = \rr_{g-1} $ and $\dotprod{\rr_{g-1}, \HH \rr_{g-1}} < 0$, which contradicts the fact that $ \rr_{g-1} $ is a zero curvature direction. By \cref{eq:rTHr_conjugated}, we have
		\begin{align*}
			0 > \pp^{T} \HH \pp = \sum_{i=0}^{g-1} \sum_{j=0}^{g-1} \xi_{i} \xi_{j} \dotprod{\rr_{i}, \HH \rr_{j}} =  \sum_{i=0}^{g-1} \xi_{i}^2 \dotprod{\rr_{i}, \HH \rr_{i}} =  \sum_{i=0}^{g-2} \xi_{i}^2 \dotprod{\rr_{i}, \HH \rr_{i}} > 0,
		\end{align*}
		which is a contradiction. Hence, we must have $ \TTg \succeq \zero $. By \cite[Remark 3.10-(ii)]{liu2022minres}, we also have $ \rr_{g-1}= \rr_{g} $.
	\end{enumerate}
\end{proof}

\cref{lemma:T_PSD} indicate that if $ \psiz = 1 $, the NPC condition \cref{eq:NPC} will be detected at the last iteration. However, if in addition $ \psin \geq 1 $, we expect to detect the NPC condition \cref{eq:NPC} earlier in the iterations. It turns out considering the interplay between $ \gg $ and $ \HH $ through the lens of $\gg$-relevant eigenvalues is paramount in establishing the complexity of MINRES for returning a direction satisfying the NPC condition \cref{eq:NPC}. For example, if $ \psin = \psiz = 0 $ and $ \psip \geq 1 $, i.e., $ \Psi(\HH, \gg) $ only contains positive eigenvalues of $ \HH $, then the non-positive curvature of $ \HH $ is simply not visible through the lens of $ \gg $.  However, when $ \psin + \psiz \geq 1 $, i.e., $ \Psi(\HH, \gg) $ contains at least one non-positive eigenvalue of $ \HH $, then we are able to detect a NPC direction. For example, when $ \psin = \psip = 0 $, then $ \gg \in \Null(\HH) $, which implies $ \dotprod{\gg,\HH\gg} = 0 $, i.e., $ \gg $ is a NPC direction, which is detected at the very first iteration. \cref{thm:T_indefinite} characterizes this property of MINRES.

\begin{theorem}
	\label{thm:T_indefinite}
	The NPC condition \cref{eq:NPC} is detected for some $ 1 \leq t \leq g $ if and only if $ \psin + \psiz  \geq 1 $. Furthermore, the NPC condition \cref{eq:NPC} is detected with strict inequality for some $ 1 \leq t \leq g $ if and only if $ \psin \geq 1 $. 
\end{theorem}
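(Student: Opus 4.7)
The plan is to reduce the statement about detection of the NPC condition \cref{eq:NPC} to a spectral inertia statement about the tridiagonal Lanczos matrix $\TTg = \VV_g^{\T} \HH \VV_g$, and then to identify the inertia of $\TTg$ with the triple $(\psip, \psin, \psiz)$ via the Krylov-subspace structure already established in \cref{lemma:Lanczos_grade}. Specifically, by \cref{lemma:MINRES_NPC_detector} (as also invoked inside the proof of \cref{lemma:T_PSD}), the NPC condition \cref{eq:NPC} is detected at iteration $t$ if and only if $\TTt \not\succ \zero$, and it is detected with strict inequality at iteration $t$ if and only if $\TTt \not\succeq \zero$. Combined with the Cauchy interlacing fact that $\TTt$ is the leading $t\times t$ principal submatrix of $\TTg$, so that $\lambda_{\min}(\TTt) \geq \lambda_{\min}(\TTg)$, this gives the clean equivalences: the NPC condition is detected for some $1\le t\le g$ iff $\TTg \not\succ \zero$, and with strict inequality for some $1\le t\le g$ iff $\TTg \not\succeq \zero$.

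Next I would pin down the spectrum of $\TTg$. From \cref{eq:H_lambda_U} and the Vandermonde computation at \cref{eq:vandermonde} in the proof of \cref{lemma:Lanczos_grade}, the Krylov basis satisfies
\[
\Klvt(\HH,\gg)\big|_{t=g} = \Span\{\HH^{j}\gg : 0\le j\le g-1\} = \Span\{\uu_1,\uu_2,\dots,\uu_{\psi}\},
\]
where $\uu_i$ is the unit eigenvector of $\HH$ from \cref{eq:U_i} associated with the $\gg$-relevant eigenvalue $\lambda_i \in \Psi(\HH,\gg)$ and $\psi = g$. This subspace is $\HH$-invariant, so the orthonormal basis $\VV_g$ of it diagonalizes the restriction of $\HH$ with eigenvalues exactly $\lambda_1,\dots,\lambda_\psi$. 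Hence $\text{spec}(\TTg) = \{\lambda_1,\dots,\lambda_\psi\}$, which implies
\[
\TTg \succ \zero \iff \psin + \psiz = 0, \qquad \TTg \succeq \zero \iff \psin = 0.
\]

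Putting the two paragraphs together yields both assertions. The NPC condition is detected for some $1\le t\le g$ iff $\TTg \not\succ \zero$ iff $\psin + \psiz \ge 1$, and it is detected with strict inequality for some $1\le t\le g$ iff $\TTg \not\succeq \zero$ iff $\psin \ge 1$. For the ``if'' direction of the strict-inequality part, note that when $\psin\ge 1$, $\TTg$ has a strictly negative eigenvalue, and then Cauchy interlacing guarantees that some $\TTt$ with $t \le g$ also has a strictly negative eigenvalue, which by the equivalence manifests as the strict NPC inequality at that iteration.

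The main technical hurdle is the spectral identification $\text{spec}(\TTg)=\{\lambda_i : \lambda_i\in \Psi(\HH,\gg)\}$: one must make sure that the distinguished eigenvectors $\uu_i$ in \cref{eq:U_i}—chosen so that $\gg$ has a non-zero projection onto each and onto only \emph{one} vector per eigenspace—indeed span the full Krylov subspace at grade $g$, and that no ``hidden'' component from $\UUp$ appears. This is precisely what the Vandermonde argument in \cref{lemma:Lanczos_grade} delivers: the matrix $\YY_\psi$ is non-singular exactly because each $\lambda_i\in\Psi(\HH,\gg)$ contributes a non-zero $\uu_i$-component to $\gg$. Once this identification is in hand, the rest of the proof reduces to the two clean inertia equivalences above plus Cauchy interlacing, and no further case analysis is needed.
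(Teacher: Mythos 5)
Your route is genuinely different from, and for the non-strict part cleaner than, the paper's. The paper proves both directions by explicit witness construction: it produces a non-positive $\gg$-relevant eigenvector $\uu$ lying in $\Klvt(\HH,\gg)$ via the Vandermonde identity \cref{eq:vandermonde}, writes $\uu = \VVt \ww$, and computes $\dotprod{\ww,\TTt\ww}\le 0$; the converse is a matching element-chase over $\zz = \sum_i \xi_i\uu_i$. You instead observe that $\Klv_g(\HH,\gg) = \Span\{\uu_1,\ldots,\uu_{\psi}\}$ is $\HH$-invariant, so that $\TTg = \VVg^{\T}\HH\VVg$ is orthogonally similar to $\textnormal{diag}\left(\lambda_1,\ldots,\lambda_{\psi}\right)$ and hence has inertia exactly $(\psip,\psin,\psiz)$; the theorem then reduces to \cref{lemma:MINRES_NPC_detector} plus Cauchy interlacing. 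This spectral identification of $\TTg$ is a tighter fact than the paper makes explicit and packages the two claims into one inertia statement rather than two separate sweeps. Minor wording issue: $\VVg$ tridiagonalizes, rather than diagonalizes, the restriction of $\HH$; what you actually use (and what is correct) is that $\TTg$ represents that restriction in an orthonormal basis and therefore shares its spectrum.

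There is, however, a real gap in your strict-inequality ``$\Leftarrow$'' direction. \cref{lemma:MINRES_NPC_detector} asserts only that $\TT_{k}\not\succ\zero$ forces the \emph{non-strict} NPC condition \cref{eq:NPC} at some $t\le k$; it says nothing about strict detection. Your sentence ``detected with strict inequality at iteration $t$ if and only if $\TTt\not\succeq\zero$'' is therefore not given by the cited lemma, and it does not follow from Cauchy interlacing either: $\TTt$ possessing a strictly negative eigenvalue does not automatically make the particular MINRES residual $\rrtp$ have strictly negative curvature. Closing this requires the $\HH$-conjugacy relation \cref{eq:rTHr_conjugated} together with $\Span\{\rr_0,\ldots,\rr_{g-1}\} = \Klv_g(\HH,\gg)$, which is precisely the device used in the proof of \cref{lemma:T_PSD}-\labelcref{lemma:T_PSD_ii}; you cite that proof but do not carry the argument out, and it deserves care because once a zero-curvature residual is encountered the residual recursion stalls and the span identity is no longer automatic. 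In fairness, the paper's own proof is equally terse here, disposing of the strict case with ``the proof can be modified accordingly,'' so you are matching the paper's rigor rather than falling below it; but since your whole strategy funnels both claims through the inertia of $\TTg$, you in particular owe a statement and proof of the strict analogue of \cref{lemma:MINRES_NPC_detector} to make the reduction honest.
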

\begin{proof}
	By \cref{lemma:Lanczos_grade}, we recall that $ g = \psi = \psip + \psin + \psiz$. 
	Suppose $ \psin + \psiz  \geq 1 $. Let $1 \leq t \leq g$ be the first iteration where $ \Klvt(\HH,\gg) $ contains an eigenvector, say $ \uu $, corresponding to a non-positive eigenvalue in $\Psi(\HH,\gg)$. From \cref{eq:vandermonde}, it is clear that such an iteration exists. Let $ \VVt \in \real^{d \times t} $ denote the basis obtained from the Lanczos process for $ \Klvt(\HH,\gg) $. Hence, we can write $ \uu = \VVt \ww $ for some non-zero $ \ww \in \real^{t} $.
	It follows that 
	\begin{align*}
		\dotprod{\ww, \TTt \ww} = \dotprod{\VVt \ww, \HH \VVt \ww} = \dotprod{\uu, \HH \uu} \leq 0,
	\end{align*}
	which implies that $ \TT_{t} \not\succ \zero $. In other words, by \cref{lemma:MINRES_NPC_detector}, as soon as $ \uu \in \Klvt(\HH,\gg) $, the NPC condition \cref{eq:NPC} is satisfied.
	
	Conversely, suppose $ \psin = \psiz  = 0 $. Clearly, no eigenvector corresponding to any non-positive eigenvalue of $ \HH $ can belong to $ \Klvt(\HH,\gg) $ for any $ 1 \leq t \leq g$. Indeed, suppose for some $ \lambda \leq 0 $, its corresponding eigenvector, $ \vv $, and some $ 1 \leq t \leq g $, we have 
	\begin{align*}
		\vv = \sum_{i = 0}^{t-1} c_{i}\HH^{i} \gg.
	\end{align*}
	In this case, since $ \lambda \notin \Phi(\HH,\gg) $, we must have 
	\begin{align*}
		\dotprod{\vv,\vv} = \sum_{i = 0}^{t-1} c_{i} \dotprod{\vv,\HH^{i} \gg} = \sum_{i = 0}^{t-1} c_{i} \lambda^{i} \dotprod{\vv, \gg} = 0,
	\end{align*}
	which contradicts the assumption that $ \vv $ is an eigenvector and hence non-zero\footnote{More generally, it can be shown that, the eigenvectors corresponding to any $ \lambda \in \Theta(\HH) \setminus \Psi(\HH,\gg) $ will not belong to $ \Klvt(\HH,\gg) $ for any $ 1 \leq t \leq g$.}. Now, from \cref{eq:vandermonde}, it follows that $ \range(\YY_{\psi}) = \Span\{\uu_{1},\ldots,\uu_{\psi}\} = \Klv_{g}(\HH,\gg) $ where $\uu_{i}$'s are as in \cref{eq:H_lambda_U}. 
	For any $ 1 \leq t \leq g $, consider $ \VVt \in \real^{d \times t} $ to be the basis obtained from the Lanczos process for $ \Klvt(\HH,\gg) $. For any non-zero $ \yy \in \real^{t} $, let $ \zz = \VVt \yy \in \Klvt(\HH,\gg) \subseteq \Klv_{g}(\HH,\gg) $ and suppose $ \xi_{1}, \ldots, \xi_{\psi} $ are some scalars such that $ \zz = \sum_{i=1}^{\psi} \xi_{i} \uu_{i} $. We have 
	\begin{align*}
		\dotprod{\yy, \TTt \yy} &= \dotprod{\VVt \yy, \HH \VVt \yy} = \dotprod{\zz, \HH \zz} \\
		&= \sum_{i=1}^{\psi} \sum_{j=1}^{\psi} \xi_{i} \xi_{j} \dotprod{\uu_{i}, \HH \uu_{j}} =  \sum_{i=1}^{\psi} \xi_{i}^{2} \lambda_{i} \dotprod{\uu_{i}, \uu_{i}} > 0,
	\end{align*}
	which implies $ \TT_{t} \succ \zero$.
	
	Finally, if instead of $ \psin + \psiz  \geq 1 $, we have $ \psin \geq 1 $, i.e., we have at least one negative $\gg$-relevant eigenvalue, then the proof can be modified accordingly.
\end{proof}

Clearly $\gg \notin \range(\HH)$ if and only if $ \psiz = 1 $. Now, if the NPC condition \cref{eq:NPC} is first detected only at the very last iteration, then from \cref{lemma:T_PSD}-\labelcref{lemma:T_PSD_ii} and the proof of \cref{thm:T_indefinite}, it follows that we must have $ \psin = 0 $. Conversely, if $ \psin = 0 $, then just as in the proof of \cref{thm:T_indefinite}, we can show that $ \TT_{t} \succeq \zero$ for all $ 1 \leq t \leq g $. Now, by the Sturm Sequence Property \cite[Theorem 8.4.1]{golub2012matrix}, since the smallest eigenvalue of $ \TT_{t} $ decreases to zero monotonically, it follows that we must have $ \TT_{g} \succeq \zero $ but $ \TT_{t} \succ \zero, \; 1 \leq t \leq g-1 $. If, in addition, $ \psiz = 0 $, then necessarily we have $ \TT_{g} \succ \zero $ also.

Building upon \cref{thm:T_indefinite}, it turns out, we can also obtain a more refined certificate for the positive (semi) definiteness of $ \HH $ than \cite[Theorem 3.5]{liu2022minres}, which might be of independent interest than the rest of this paper. 
\begin{theorem}
	\label{thm:H_PSD}
	Assume $ \Psi(\HH, \gg) = \Theta(\HH) $, i.e., all eigenvalues of $ \HH $ are $\gg$-relevant. 
	\begin{enumerate}
		\item $ \HH \succ \zero $ if and only if the NPC condition \cref{eq:NPC} is never detected for all $ 1 \leq t \leq g $.
		\item $ \HH \succeq \zero $ if and only if the NPC condition \cref{eq:NPC} is first detected at the last iteration and evaluates to zero.
	\end{enumerate}
\end{theorem}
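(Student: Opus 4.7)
My plan is to recast positive (semi-)definiteness of $\HH$ entirely in terms of the triple $(\psip,\psin,\psiz)$, which is made possible by the hypothesis $\Psi(\HH,\gg) = \Theta(\HH)$, and then to invoke \cref{thm:T_indefinite} together with \cref{lemma:T_PSD} and the monotonicity observations discussed immediately after them. The intuition is simple: when every eigenvalue of $\HH$ is $\gg$-relevant, MINRES can ``see'' the full spectrum through the Lanczos tridiagonal matrices $\TT_t$, and so non-positive eigenvalues of $\HH$ cannot remain hidden from the NPC test.

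For part 1, I would argue both directions at once. Under the hypothesis, $\HH \succ \zero$ is equivalent to every eigenvalue of $\HH$ being strictly positive, which in turn is equivalent to $\psin = \psiz = 0$. By the first statement of \cref{thm:T_indefinite}, this is precisely the condition for the NPC condition \cref{eq:NPC} to remain undetected throughout $1 \leq t \leq g$. The biconditional is then immediate.

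For part 2, I would read the right-hand side as the strictly semi-definite (but not definite) situation. Under the hypothesis, $\HH \succeq \zero$ with $\HH \not\succ \zero$ corresponds to $\psin = 0$ together with $\psiz = 1$, since $\psiz \in \{0,1\}$ by definition. For the forward direction, if $\psin = 0$ and $\psiz = 1$, the paragraph following \cref{lemma:T_PSD} yields $\TT_t \succ \zero$ for $1 \leq t \leq g - 1$ via the Sturm Sequence Property, while \cref{lemma:T_PSD}-\labelcref{lemma:T_PSD_ii} gives $\TT_g \succeq \zero$ but singular. By the NPC detection characterization in \cref{lemma:MINRES_NPC_detector}, the NPC condition is therefore first triggered at $t = g$; furthermore, \cref{lemma:T_PSD}-\labelcref{lemma:T_PSD_ii} guarantees that $\rr_{g-1}$ is a zero-curvature direction, so the inequality in \cref{eq:NPC} is attained with equality. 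For the converse, if the NPC condition is first detected at the very last iteration and evaluates to zero, then the second statement of \cref{thm:T_indefinite} forces $\psin = 0$ (since strict NPC detection would require $\psin \geq 1$), while the first statement forces $\psin + \psiz \geq 1$, leaving $\psiz = 1$; under the hypothesis this gives $\HH \succeq \zero$.

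The main obstacle I anticipate is not any single calculation but the careful bookkeeping at the boundary between parts 1 and 2: one has to track that $\lambda_{\min}(\TT_t)$ decays monotonically to $\lambda_{\min}(\HH) = 0$ along the Lanczos process, stays strictly positive for $t < g$, and hits zero exactly at $t = g$, rather than passing through zero prematurely. Modulo that verification, which is supplied by the Sturm Sequence Property already invoked in the paragraph preceding the theorem, the proof is a clean combination of previously established facts.
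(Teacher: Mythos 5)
Your proof is correct in substance and takes a cleaner, more economical route than the paper's. Rather than re-constructing an eigenvector inside $\Klv_g(\HH,\gg)$ and deriving a Rayleigh-quotient contradiction through the Lanczos basis (which is what the paper does afresh for both directions of both parts), you first translate positive (semi-)definiteness of $\HH$ under the hypothesis $\Psi(\HH,\gg) = \Theta(\HH)$ into the purely combinatorial conditions $\psin = \psiz = 0$ (resp.\ $\psin = 0$, $\psiz = 1$), and then invoke \cref{thm:T_indefinite} and \cref{lemma:T_PSD} as black boxes. This reuses the paper's own infrastructure rather than duplicating it: part 1 becomes an immediate corollary of the first biconditional in \cref{thm:T_indefinite}, and the converse of part 2 drops out of the second biconditional. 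The paper's version is more self-contained (helpful if one mistrusts the somewhat terse proof of \cref{thm:T_indefinite}), while yours is shorter and makes the logical dependencies explicit.

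Two small repairs before this can be merged as written. First, a citation slip: the paragraph you lean on for the Sturm-sequence argument ($\TT_t \succ \zero$ for $t < g$, $\TT_g \succeq \zero$) is the one following \cref{thm:T_indefinite}, not the one following \cref{lemma:T_PSD}. Second, in the forward direction of part 2 you cite \cref{lemma:T_PSD}-\labelcref{lemma:T_PSD_ii} to conclude $\TT_g \succeq \zero$; but that part of the lemma has as its \emph{hypothesis} precisely that the NPC condition is first detected at the last iteration, which is the conclusion you are in the middle of establishing. The fix is painless: $\TT_g = \VV_g^{\T}\HH\VV_g \succeq \zero$ follows directly from $\HH \succeq \zero$ (as the paper does), and singularity of $\TT_g$ follows from $\psiz = 1$ via the first clause of \cref{lemma:T_PSD}-\labelcref{lemma:T_PSD_ii} (which does not carry that hypothesis). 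With those two adjustments the argument is airtight.
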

\begin{proof}
	\hfill
	\begin{enumerate}
		\item   Clearly if $ \HH \succ \zero $, then by \cref{eq:NPC_curve}, the NPC condition \cref{eq:NPC} is never detected. Conversely, suppose $ \HH \not \succ \zero $. If $ t \leq g $ is the first iteration such that $ \TTt \not \succ \zero$, then by \cref{lemma:MINRES_NPC_detector}, the NPC condition \cref{eq:NPC} must hold. So, suppose $ \TTt \succ \zero$ for all $ 1 \leq t \leq g $. Since $ \Psi(\HH, \gg) = \Theta(\HH) $, the decomposition \cref{eq:H_lambda_U} must involve an eigenvector, $\uu$, corresponding to a non-positive eigenvalue of $ \HH $. From \cref{eq:vandermonde}, it follows that $ \uu \in \Klv_{g}(\HH,\gg) $. Let $ \VVg \in \real^{d \times g} $ denote the basis obtained from the Lanczos process for $ \Klv_{g}(\HH,\gg) $. Hence, we can write $ \uu = \VVg \ww $ for some non-zero $ \ww \in \real^{g} $.
		It follows that 
		\begin{align*}
			\dotprod{\ww, \TTg \ww} = \dotprod{\VVg \ww, \HH \VVg \ww} = \dotprod{\uu, \HH \uu} \leq 0,
		\end{align*}
		which implies that $ \TT_{g} \not\succ \zero $, and we arrive at a contradiction. 
		
		\item If $ \HH \succeq \zero $, then we must have $ \TTt = \VVtT \HH \VVt \succeq \zero $. Now, by the Sturm Sequence Property, it follows that we must have $ \TT_{g} \succeq \zero $ but $ \TT_{t} \succ \zero, \; 1 \leq t \leq g-1 $. The results follows from applying \cref{lemma:MINRES_NPC_detector}. 
		Now, suppose the NPC condition \cref{eq:NPC} is first detected at the last iteration, and $ \dotprod{\rr_{g-1}, \HH \rr_{g-1}}  = 0$ but $ \HH \not \succeq \zero $. By \cref{lemma:MINRES_NPC_detector,lemma:T_PSD}, we must have $ \TTg \succeq \zero $. Similar to the proof of the first part, consider $ \uu \in \Klv_{g}(\HH,\gg) $ to be an eigenvector in the decomposition \cref{eq:H_lambda_U}, which corresponds to a negative eigenvalue of $ \HH $. Letting $ \uu = \VVg \ww $ for some non-zero $ \ww \in \real^{g} $, it follows that 
		\begin{align*}
			\dotprod{\ww, \TTg \ww} = \dotprod{\VVg \ww, \HH \VVg \ww} = \dotprod{\uu, \HH \uu} < 0,
		\end{align*}
		which implies that $ \TT_{g} \not\succeq \zero $, and we arrive at a contradiction.
	\end{enumerate}
\end{proof}

We end this section with an important implication of \cref{eq:H_decomp_PSD}, which is used heavily in our analysis later in this paper.
\begin{lemma}
	\label{lemma:HUUS}
	Let $ 1 \leq t \leq g $. For any $ \zz \in \Klv_{t}(\HH, \gg) $, we have $ \HH \zz = \HH\UU\UUT \zz $ where $ \UU $ is as in \cref{eq:H_decomp_PSD}.
\end{lemma}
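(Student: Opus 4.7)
The plan is to show that $\Klv_{t}(\HH, \gg) \subseteq \rg{\UU}$, from which the identity $\HH \zz = \HH \UU \UUT \zz$ follows immediately by orthonormality of the columns of $\UU$.

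First, I would invoke the spectral expansion \cref{eq:H_lambda_U}, which writes $\gg = \sum_{i=1}^{\psi} \dotprod{\uu_{i},\gg} \uu_{i}$, where each $\uu_{i} = \uu_{i}^{\{1\}}$ is the unit eigenvector in $\mathcal{E}_{\lambda_{i}}(\HH)$ aligned with the projection of $\gg$ onto that eigenspace. Applying $\HH^{j}$ term by term (exactly as in the derivation leading to \cref{eq:vandermonde}) gives
\begin{align*}
	\HH^{j} \gg = \sum_{i=1}^{\psi} \lambda_{i}^{j} \dotprod{\uu_{i},\gg} \uu_{i}, \quad j = 0, 1, \ldots, t-1.
\end{align*}
Thus every Krylov basis vector lies in $\Span\{\uu_{1},\ldots,\uu_{\psi}\}$, and any $\zz \in \Klv_{t}(\HH,\gg)$ admits an expansion $\zz = \sum_{i=1}^{\psi} \alpha_{i} \uu_{i}$ for some scalars $\alpha_{i}$.

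Second, by the construction in \cref{eq:U_i} and \cref{eq:U_p}, each $\uu_{i}$, for $1 \leq i \leq \psi$, appears as one of the columns of $\UU$ (specifically, as the first column of the block $\UU_{i}$). Since $\UU$ has orthonormal columns — its columns are eigenvectors of $\HH$ belonging either to distinct eigenspaces or chosen orthonormally within a common eigenspace — the operator $\UU \UUT$ is the orthogonal projector onto $\rg{\UU}$. Because $\Span\{\uu_{1},\ldots,\uu_{\psi}\} \subseteq \rg{\UU}$, we obtain $\UU \UUT \zz = \zz$, and applying $\HH$ to both sides yields the claim.

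I do not expect any substantive obstacle: the lemma is a direct bookkeeping consequence of \cref{eq:H_lambda_U} together with the orthonormality built into \cref{eq:H_decomp_PSD}. The only point worth flagging is that $\UU$ also contains the orthogonal complements $\uu_{i}^{\{2\}},\ldots,\uu_{i}^{\{\mi\}}$ within each $\gg$-relevant eigenspace; this extra content is harmless here, since inclusion in the larger range $\rg{\UU}$ only strengthens the identity $\UU \UUT \zz = \zz$.
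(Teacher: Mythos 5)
Your key intermediate claim --- that $\Klvt(\HH,\gg) \subseteq \range(\UU)$, so that $\UU\UUT\zz = \zz$ for all $\zz$ in the Krylov subspace --- is false when $\psiz = 1$, i.e., when $\gg$ has a nonzero component along the null space of $\HH$. In that case the zero eigenvalue is $\gg$-relevant, so the corresponding unit eigenvector $\uu_{\psip+1}$ appears in the expansion $\gg = \sum_{i=1}^{\psi}\dotprod{\uu_i,\gg}\uu_i$ of \cref{eq:H_lambda_U} with a nonzero coefficient, yet $\uu_{\psip+1}$ is \emph{not} a column of $\UU$: as \cref{eq:H_decomp_PSD,eq:U_p} make explicit, $\UU = [\UU_+\ \UU_-]$ collects eigenvectors only for strictly positive and strictly negative $\gg$-relevant eigenvalues, with the zero eigenspace relegated to $\UUp$. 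Since $\gg = \HH^{0}\gg \in \Klv_{1}(\HH,\gg) \subseteq \Klvt(\HH,\gg)$ and $\gg \notin \range(\UU)$, your inclusion fails, and $\UU\UUT\gg \neq \gg$. Your closing remark flags the wrong concern --- it is not about $\UU$ containing extra orthogonal-complement vectors $\uu_i^{\{2\}},\ldots,\uu_i^{\{m_i\}}$, which indeed is harmless, but about $\UU$ \emph{missing} the zero eigenvector when $\psiz = 1$.

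The lemma is nevertheless true, because the missing component is annihilated by $\HH$ rather than being absent from $\zz$. The paper's proof exploits precisely this: the projection $\UUp\UUpT\gg$ need not vanish, but $\Lambda_\perp\UUpT\gg = \zero$ (the $\gg$-relevant part of $\UUp$ carries eigenvalue zero, and the rest of $\UUp$ is orthogonal to $\gg$), whence $\HH\UUp\UUpT\gg = \zero$. Pushing this through the polynomial expansion $\zz = \sum_{j=0}^{t-1}c_j\HH^j\gg$ and inserting the identity $\eye = \UU\UUT + \UUp\UUpT$ after the first application of $\HH$ then gives $\HH\zz = \HH\UU\UUT\zz$ directly, without ever asserting that $\zz$ itself lies in $\range(\UU)$. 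To repair your argument you would need to replace ``$\UU\UUT\zz = \zz$'' by ``$\HH\UU\UUT\zz = \HH\zz$,'' which is exactly the statement to be proved, so you must work one application of $\HH$ past $\gg$ as in the paper.
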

\begin{proof}
	We first note that that since $ \Lambda_{\perp} \UUpT \gg = \zero $, it follows that $ \HH \UUp \UUpT \gg = \UUp \UUpT \HH \gg = \zero $. Hence, for any $ \zz \in \kry(\HH, \gg) $, we have
	\begin{align*}
		\HH \zz &= \HH \left( \sum_{i = 0}^{t-1} c_{i} \HH^{i} \gg \right) = \sum_{i = 0}^{t-1} c_{i} \HH^{i+1} \gg \\ 
		&= \sum_{i = 0}^{t-1} c_{i} \HH^{i} \left(\HH \UU \UU^{\T} + \HH \UUp \UUp^{\T}\right) \gg \\
		&= \sum_{i = 0}^{t-1} c_{i} \HH^{i} \HH \UU \UU^{\T} \gg = \HH \left( \sum_{i = 0}^{t-1} c_{i} \HH^{i} \UU \UU^{\T}  \gg \right) \\
		&= \HH \UU \UU^{\T} \left( \sum_{i = 0}^{t-1} c_{i} \HH^{i} \gg \right) = \HH \UU \UU^{\T} \zz.
	\end{align*}
\end{proof}

\subsubsection{Complexity of Finding Approximate Solution}
\label{sec:MINRES_complexity_solution}
So long as the NPC condition \cref{eq:NPC} has not been detected, the iterates of \cref{alg:MINRES} are guaranteed to yield decent in both $ f(\xx) $ and $ \vnorm{\gg}^{2} $. We now investigate the complexity of \cref{alg:MINRES} for obtaining an approximate solution to \cref{eq:lieast-squares}. 
Clearly, if $ \ggk \in \Null(\HHk) $, then $ \ggk $ is declared an NPC direction at the first iteration of \cref{alg:MINRES}. So, in obtaining the convergence rate, we can safely assume that $ \ggk \not \in \Null(\HHk) $. \cref{lemma:MINRES_complexity_Sol} will give the complexity of MINRES for obtaining a solution, which satisfies the inexactness condition \cref{eq:inexactness}.

\begin{lemma}
	\label{lemma:MINRES_complexity_Sol}
	Suppose $ \gg \notin \Null(\HH) $, i.e., $ \psip+\psin \geq 1 $. Consider any $ 1 \leq i \leq \psip $ and $\psip+\psiz+1 \leq j \leq \psi$. For any $ 0 < \theta < 1 $, after at most 
	\begin{align*}
		t \geq \frac{\sqrt{\max\{\kappap_{i},\kappan_{j}\}}}{4} \log \left(\frac{4}{ \theta}\right),
	\end{align*}
	iterations of \cref{alg:MINRES}  (without the NPC detection mechanism of \hyperref[alg:MINRES:NPC]{Step 11}), we have 
	\begin{align*}
		\vnorm{\rrt}^{2} &\leq \vnorm{\left( \eye - \begin{bmatrix}\UU_{i+} & \UU_{j-} \end{bmatrix} \begin{bmatrix}\UU_{i+} & \UU_{j-} \end{bmatrix}^{\intercal} \right) \gg}^2 \\
		& + \theta \vnorm{\left(\begin{bmatrix}\UU_{i+} & \UU_{j-} \end{bmatrix} \begin{bmatrix}\UU_{i+} & \UU_{j-} \end{bmatrix}^{\intercal}\right)\gg}^{2},
	\end{align*}
	and in particular,
	\begin{align*}
		\vnorm{\UU \UUT \rrt}^{2} &\leq \vnorm{\left( \UU\UUT - \begin{bmatrix}\UU_{i+} & \UU_{j-} \end{bmatrix} \begin{bmatrix}\UU_{i+} & \UU_{j-} \end{bmatrix}^{\intercal} \right) \gg}^2 \\
		&+ \theta \vnorm{\left(\begin{bmatrix}\UU_{i+} & \UU_{j-} \end{bmatrix} \begin{bmatrix}\UU_{i+} & \UU_{j-} \end{bmatrix}^{\intercal}\right)\gg}^{2},
	\end{align*}
	where 
	\begin{align*}
		\kappap_{i} \defeq \frac{\lambda_{1}}{\lambda_{i}}, \quad \text{and}, \quad \kappan_{j} \defeq \frac{\lambda_{\psi}}{\lambda_{j}},
	\end{align*}
	and $ \UU$, $\UU_{i+} $, and $\UU_{j-}$ are as in \cref{eq:H_decomp_PSD,eq:H_decomp_PSD_ij}.
	If $ \psin = 0 $, then the statement holds with $ \kappap_{i} $ instead of $\max\{\kappap_{i},\kappan_{j}\}$.  The statement is similarly modified if $ \psip = 0 $.
\end{lemma}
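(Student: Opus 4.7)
The plan is to combine MINRES's variational characterization with an explicit polynomial construction. Because $\sst$ minimizes $\vnorm{\HH \ss + \gg}^2$ over the Krylov subspace $\krylovt{\HH,\gg}$, the residual satisfies $\rrt = -q_t^{\star}(\HH)\gg$ for some polynomial $q_t^{\star}$ of degree at most $t$ with $q_t^{\star}(0) = 1$ that is optimal; hence $\vnorm{\rrt}^2 \leq \vnorm{q(\HH)\gg}^2$ for every feasible polynomial $q$. Expanding through the eigendecomposition in \cref{eq:H_decomp_PSD} and using $\gg \perp \UUp$ (so only the $\psi$ columns of $\UU$ contribute), I would write
\begin{align*}
\vnorm{q(\HH)\gg}^2 \;=\; \sum_{k=1}^{\psi} q(\lambda_k)^2 \langle \uu_k, \gg\rangle^2.
\end{align*}
The decomposition \cref{eq:H_decomp_PSD_ij} induces an index partition into extreme indices $E = \{1,\ldots,i\} \cup \{j,\ldots,\psi\}$, with eigenvalues in $[\lambda_i, \lambda_1] \cup [\lambda_\psi, \lambda_j]$, and its complement $M$, whose eigenvalues lie in $(\lambda_j, \lambda_i)$ (including zero when $\psiz = 1$). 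Splitting the above sum along $E$ and $M$ matches exactly $\vnorm{[\UU_{i+}\,\UU_{j-}][\UU_{i+}\,\UU_{j-}]^{\T}\gg}^2$ and $\vnorm{(\eye - [\UU_{i+}\,\UU_{j-}][\UU_{i+}\,\UU_{j-}]^{\T})\gg}^2$ respectively.

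It therefore suffices to exhibit a polynomial $q$ of degree at most $t$ with $q(0) = 1$ such that $|q(\lambda)| \leq 1$ on the whole interval $[\lambda_j,\lambda_i]$ (which contains every middle $\gg$-relevant eigenvalue and, if present, the zero eigenvalue) and $|q(\lambda)|^2 \leq \theta$ on $[\lambda_i,\lambda_1] \cup [\lambda_\psi,\lambda_j]$. I would build $q$ as a product $q = p^{+}_{t_1}\cdot p^{-}_{t_2}$, where $p^{+}_{t_1}$ is the scaled Chebyshev polynomial on $[\lambda_i,\lambda_1]$ normalized by $p^{+}_{t_1}(0) = 1$ and $p^{-}_{t_2}$ is the analogous scaled Chebyshev polynomial on $[\lambda_\psi,\lambda_j]$. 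Standard Chebyshev theory yields $\vnorm{p^{+}_{t_1}}_{[\lambda_i,\lambda_1]} \leq 2\bigl(\tfrac{\sqrt{\kappap_i}-1}{\sqrt{\kappap_i}+1}\bigr)^{t_1}$ with $|p^{+}_{t_1}(\lambda)| \leq 1$ on $[0,\lambda_i]$, and symmetrically for $p^{-}_{t_2}$; allocating $t_1, t_2$ each bounded by the right-hand side of the lemma will make every ``on-side'' factor at most $\sqrt{\theta}/2$ on its own extreme interval, which gives $|q|^2 \leq \theta$ there.

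The delicate step is controlling the cross evaluations: bounding $p^{-}_{t_2}$ on $(0,\lambda_i]$ and $p^{+}_{t_1}$ on $[\lambda_j,0)$, since each scaled Chebyshev is only automatically bounded by $1$ on the half of the real line adjacent to its own interval. Representing $T_m(x) = \cosh\bigl(m\,\mathrm{arccosh}(x)\bigr)$ on the affine transformations of the two extreme intervals and carrying out the ratio estimates, I would show that any growth of one factor on the ``wrong'' side of $0$ is dominated by the decay of the companion factor in the same region, so that the product $q$ remains bounded by $1$ throughout $[\lambda_j,\lambda_i]$; this is the main obstacle, and the place where the appearance of $\max\{\kappap_i,\kappan_j\}$ (rather than an additive combination) is forced. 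The same argument, applied to $\UU\UUT \rrt$ in place of $\rrt$, yields the second inequality since the action of $q_t^{\star}(\HH)$ commutes with the spectral projection onto the $\gg$-relevant subspace. Finally, for the one-sided cases $\psin = 0$ or $\psip = 0$, only a single Chebyshev factor is needed and the argument collapses to the classical positive-definite Chebyshev bound applied to $\HH$ or $-\HH$ restricted to its (sole) nontrivial extreme interval, producing the rate $\sqrt{\kappap_i}$ or $\sqrt{\kappan_j}$ as stated.
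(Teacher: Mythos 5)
Your route is genuinely different from the paper's, and it has a genuine gap. The paper's proof never constructs a single polynomial; instead it works algebraically. Starting from $\vnorm{\rr_t}^2 = \min_{\ss \in \Klvt(\HH,\gg)}\vnorm{\HH\ss+\gg}^2$ and using \cref{lemma:HUUS}, it orthogonally splits the objective along the positive and negative $\gg$-relevant eigenspaces, reducing to a sum of \emph{two separate} minimizations over $\Klvt(\Lambda_{+},\UU_{+}^{\T}\gg)$ and $\Klvt(\Lambda_{-},\UU_{-}^{\T}\gg)$. After further restricting to $\Lambda_{i+}$ and $\Lambda_{j-}$ and discarding the remaining spectral components into the ``$\vnorm{(\eye - [\UU_{i+}\,\UU_{j-}][\UU_{i+}\,\UU_{j-}]^{\T})\gg}^2$'' term, it invokes the classical definite MINRES/Chebyshev rate from \cite{greenbaum1997iterative} on each sub-block independently, which is where the factor $\sqrt{\max\{\kappap_{i},\kappan_{j}\}}$ comes from: each block is treated as its own positive-definite problem, so one never needs a single polynomial that is simultaneously small on both extreme intervals with $q(0)=1$.

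The cross-evaluation control you flag is not a cosmetic difficulty — it breaks your construction as stated. A scaled Chebyshev $p^{+}$, normalized by $p^{+}(0)=1$, grows geometrically for $\lambda < 0$ (moving further from its base interval $[\lambda_i,\lambda_1]$), and the geometric decay rate of $p^{-}$ toward $\lambda_j$ from $0$ need not dominate it. For a concrete instance take $\lambda_1 = 2$, $\lambda_i = 1$, $\lambda_j = -10$, $\lambda_\psi = -20$ (so $\kappap_i = \kappan_j = 2$) and split the degree evenly ($t_1 = t_2 = m$): using $T_m(x)\approx \hf(|x|+\sqrt{x^2-1})^m$ one finds $p^{+}(-10)\approx (7.9)^m$ while $p^{-}(-10)\approx 2(5.8)^{-m}$, so the product $q(-10)\approx 2(1.35)^m$ grows with $m$ and is never bounded by $1$. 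Because the ``wrong-side'' growth depends on how far the two extreme intervals are from zero and from each other, any allocation of $t_1,t_2$ that makes the product bounded on the middle interval will entangle $\kappap_i$ and $\kappan_j$ (and the cross-interval separation), so that $\sqrt{\max\{\kappap_{i},\kappan_{j}\}}$ cannot come out of a product-of-Chebyshevs argument of the form you propose. This is exactly the kind of coupling the paper's decomposition is designed to bypass, and it is the reason the two approaches are not interchangeable.
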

\begin{proof}  
	Recalling the $ t\th $ iteration of MINRES as in \cref{eq:MINRES}, it follows that
	\begin{align*}
		&\min_{\ss \in \kry(\HH, \gg)} \vnorm{\HH \ss + \gg}^2 = \min_{\ss \in \kry(\HH, \gg)} \vnorm{\HH \ss + \left(\UU \UU^{\T} + \UUp \UUp^{\T}\right) \gg}^2 \\
		&= \min_{\ss \in \kry(\HH, \gg)} \vnorm{\HH \ss + \UU \UU^{\T} \gg}^2 + \vnorm{\UUp \UUp^{\T}\gg}^2 \\
		&= \min_{\ss \in \kry(\HH, \gg)} \vnorm{\HH \UU \UU^{\T} \ss + \UU \UU^{\T} \gg}^2 + \vnorm{\UUp \UUp^{\T}\gg}^2 \\
		&= \min_{\ss \in \UU \UU^{\T} \kry(\HH, \gg)} \vnorm{\HH \UU \UU^{\T} \ss + \UU \UU^{\T} \gg}^2 + \vnorm{\UUp \UUp^{\T}\gg}^2 \\
		&= \min_{\ss \in \UU \UU^{\T} \kry(\HH, \gg)} \vnorm{\UU \UU^{\T} \HH  \ss + \UU \UU^{\T} \gg}^2 + \vnorm{\UUp \UUp^{\T}\gg}^2 \\
		&= \min_{\ss \in \UU \UU^{\T} \kry(\HH, \gg)} \left(\vnorm{\UU_{+} \UU_{+}^{\T} \HH  \ss + \UU_{+} \UU_{+}^{\T} \gg}^2 + \vnorm{\UU_{-} \UU_{-}^{\T} \HH  \ss + \UU_{-} \UU_{-}^{\T} \gg}^2 \right) + \vnorm{\UUp \UUp^{\T}\gg}^2 \\
		&= \min_{\ss \in \UU \UU^{\T} \kry(\HH, \gg)} \left(\vnorm{ \HH \UU_{+} \UU_{+}^{\T} \ss + \UU_{+} \UU_{+}^{\T} \gg}^2 + \vnorm{ \HH \UU_{-} \UU_{-}^{\T} \ss + \UU_{-} \UU_{-}^{\T} \gg}^2 \right) + \vnorm{\UUp \UUp^{\T}\gg}^2 \\
		&= \min_{\pp \in \UU_{+} \UU_{+}^{\T} \kry(\HH, \gg)} \vnorm{ \HH \UU_{+} \UU_{+}^{\T} \pp + \UU_{+} \UU_{+}^{\T} \gg}^2 + \min_{\qq \in \UU_{-} \UU_{-}^{\T} \kry(\HH, \gg)} \vnorm{ \HH \UU_{-} \UU_{-}^{\T} \qq + \UU_{-} \UU_{-}^{\T} \gg}^2  \\
		&\quad \quad \quad \quad + \vnorm{\UUp \UUp^{\T}\gg}^2 \\
		&= \min_{\pp \in \UU_{+} \UU_{+}^{\T} \kry(\HH, \gg)} \vnorm{ \Lambda_{+} \UU_{+}^{\T} \pp + \UU_{+}^{\T} \gg}^2 + \min_{\qq \in \UU_{-} \UU_{-}^{\T} \kry(\HH, \gg)} \vnorm{ \Lambda_{-}  \UU_{-}^{\T}  \qq + \UU_{-}^{\T} \gg}^2  + \vnorm{\UUp \UUp^{\T}\gg}^2 \\
		&= \min_{\pp \in \UU_{+} \kry(\Lambda_{+}, \UU_{+}^{\T}\gg)} \vnorm{ \Lambda_{+} \UU_{+}^{\T} \pp + \UU_{+}^{\T} \gg}^2 + \min_{\qq \in \UU_{-} \kry(\Lambda_{-}, \UU_{-}^{\T} \gg)} \vnorm{ \Lambda_{-}  \UU_{-}^{\T}  \qq + \UU_{-}^{\T} \gg}^2  + \vnorm{\UUp \UUp^{\T}\gg}^2 \\
		&= \min_{\hat{\pp} \in \kry(\Lambda_{+}, \UU_{+}^{\T}\gg)} \vnorm{ \Lambda_{+} \hat{\pp} + \UU_{+}^{\T} \gg}^2 + \min_{\hat{\qq} \in \kry(\Lambda_{-}, \UU_{-}^{\T} \gg)} \vnorm{ \Lambda_{-} \hat{\qq} + \UU_{-}^{\T} \gg}^2  + \vnorm{\UUp \UUp^{\T}\gg}^2.
	\end{align*}
	Now, for $ 1 \leq i \leq \psip $ and $\psip+\psiz+1 \leq j \leq \psi$, we have 
	\begin{align*}
		\min_{\ss \in \kry(\HH, \gg)} \vnorm{\HH \ss + \gg}^2 &= \min_{\tilde{\pp} \in \kry\left(\Lambda_{i+}, \UU_{i+}^{\T}\gg\right)}  \vnorm{ \Lambda_{i+} \tilde{\pp}  + \UU_{i+}^{\T} \gg}^2 \\
		&+ \min_{\bar{\pp} \in \kry\left(\Lambda^{\complement}_{i+}, \left[\UU^{\complement}_{i+}\right]^{\T}\gg\right)}  \vnorm{ \Lambda^{\complement}_{i+} \bar{\pp} + \left[\UU_{i+}^{\complement}\right]^{\T} \gg}^2 \\
		&+ \min_{\tilde{\qq} \in \kry\left(\Lambda_{j-}, \UU_{j-}^{\T}\gg\right)}  \vnorm{ \Lambda_{j-} \tilde{\qq}  + \UU_{j-}^{\T} \gg}^2 \\
		&+ \min_{\bar{\qq} \in \kry\left(\Lambda^{\complement}_{j-}, \left[\UU^{\complement}_{j-}\right]^{\T}\gg\right)}  \vnorm{ \Lambda^{\complement}_{j-} \bar{\qq} + \left[\UU_{j-}^{\complement}\right]^{\T} \gg}^2  + \vnorm{\UUp \UUp^{\T}\gg}^2 \\
		&\leq \min_{\tilde{\pp} \in \kry\left(\Lambda_{i+}, \UU_{i+}^{\T}\gg\right)}  \vnorm{ \Lambda_{i+} \tilde{\pp}  + \UU_{i+}^{\T} \gg}^2 + \min_{\tilde{\qq} \in \kry\left(\Lambda_{j-}, \UU_{j-}^{\T}\gg\right)}  \vnorm{ \Lambda_{j-} \tilde{\qq}  + \UU_{j-}^{\T} \gg}^2 \\
		& \quad + \vnorm{\left[\UU_{i+}^{\complement}\right]^{\T} \gg}^2 + \vnorm{\left[\UU_{j-}^{\complement}\right]^{\T} \gg}^2  + \vnorm{\UUp^{\T}\gg}^2.
	\end{align*}
	Since $ \Lambda_{i+} \succ \zero $ and $ \Lambda_{j-} \prec \zero $, from the standard convergence rate of MINRES for definite matrices \cite[(3.12)]{greenbaum1997iterative}, we get 
	\begin{align*}
		\vnorm{ \Lambda_{i+} \tilde{\pp} + \UU_{i+}^{\T} \gg} &\leq 2 \vnorm{\UU_{i+} \UU_{i+}^{\T} \gg} \left(\frac{\sqrt{\kappap_{i}} - 1}{\sqrt{\kappap_{i}} + 1}\right)^{t}, \\
		\vnorm{ \Lambda_{j-} \tilde{\qq} + \UU_{j-}^{\T} \gg}^2  &\leq 2 \vnorm{\UU_{j-} \UU_{j-}^{\T} \gg} \left(\frac{\sqrt{\kappan_{j}} - 1}{\sqrt{\kappan_{j}} + 1}\right)^{t},
	\end{align*}
	which implies that
	\begin{align*}
		\vnorm{\rrt}^{2} &\leq \vnorm{\left( \UU_{i+}^{\complement} \left[\UU_{i+}^{\complement}\right]^{\T} + \UU_{j-}^{\complement} \left[\UU_{j-}^{\complement}\right]^{\T} + \UUp \UUp^{\T} \right) \gg}^2 \\
		& \quad + 4 \vnorm{\UU_{i+} \UU_{+}^{\T} \gg}^{2} \left(\frac{\sqrt{\kappap_{i}} - 1}{\sqrt{\kappap_{i}} + 1}\right)^{2t}  + 4 \vnorm{\UU_{j-} \UU_{j-}^{\T} \gg}^{2} \left(\frac{\sqrt{\kappan_{j}} - 1}{\sqrt{\kappan_{j}} + 1}\right)^{2t} \\
		&\leq \vnorm{\left( \eye - \begin{bmatrix}\UU_{i+} & \UU_{j-} \end{bmatrix} \begin{bmatrix}\UU_{i+} & \UU_{j-} \end{bmatrix}^{\intercal} \right) \gg}^2 \\
		& + 4 \vnorm{\left(\UU_{i+} \UU_{i+}^{\T} + \UU_{j-} \UU_{j-}^{\T}\right) \gg}^{2} \left( \max \left\{\frac{\sqrt{\kappap_{i}} - 1}{\sqrt{\kappap_{i}} + 1}, \frac{\sqrt{\kappan_{j}} - 1}{\sqrt{\kappan_{j}} + 1}\right\} \right)^{2t},
	\end{align*}
	where we have used the fact that $ \begin{bmatrix}\UU_{i+} & \UU_{j-} \end{bmatrix} \begin{bmatrix}\UU_{i+} & \UU_{j-} \end{bmatrix}^{\intercal} = \eye -\UU_{i+}^{\complement} \left[\UU_{i+}^{\complement}\right]^{\T} - \UU_{j-}^{\complement} \left[\UU_{j-}^{\complement}\right]^{\T} - \UUp \UUp^{\T} $.
	Hence, using the fact that $ \log \max \{a,b\} = \max \log\{a,b\}, \; a > 0, b > 0$, if 
	\begin{align*}
		&t \geq \frac{\sqrt{\max\{\kappap_{i},\kappan_{j}\}}}{4} \log \left(4/\theta\right),
	\end{align*}
	then we have the desired result. 
\end{proof}

\begin{remark}
	\cref{lemma:MINRES_complexity_Sol} gives the convergence of MINRES involving arbitrary symmetric matrices. To our knowledge, it provides an improvement over the existing results on the convergence of MINRES applied to indefinite symmetric matrices. Indeed, available general convergence results for indefinite problems imply rates depending on $ \kappap $ and $ \kappan $ as opposed to $ \sqrt{\kappap} $ and $ \sqrt{\kappan} $, e.g., \cite{greenbaum1997iterative,xie2017convergence,simoncini2013superlinear,fischer2011polynomial}. While the existing convergence results in the literature are sub-optimal, \cref{lemma:MINRES_complexity_Sol} provides a convergence rate, which similar to positive-definite system, depends optimally on $ \sqrt{\kappap} $ and $ \sqrt{\kappan} $. The key to obtaining the improvements in \cref{lemma:MINRES_complexity_Sol} is by considering the interplay between $ \gg $ and $ \HH $ by separating positive and negative $\gg$-relevant eigenvalues, and the projection of the iterates on the corresponding eigenspaces.  
\end{remark}

From the proof of \cref{lemma:MINRES_complexity_Sol}, it is easy to see that, for any $ 1 \leq i \leq \psip $ and $\psip+\psiz+1 \leq j \leq \psi$, we have
\begin{align*}
	\min_{\ss \in \kry(\HH, \gg)} \vnorm{\HH \ss + \gg}^2 &\leq \min \left\{\min_{\hat{\pp} \in \kry(\Lambda_{i+}, \UU_{j+}^{\T}\gg)} \vnorm{ \Lambda_{i+} \hat{\pp} + \UU_{i+}^{\T} \gg}^2 + \vnorm{\eye - \UU_{i+} \UU_{i+}^{\T}\gg}, \right. \\
	& \quad \quad \quad \quad \left. \min_{\hat{\qq} \in \kry(\Lambda_{j-}, \UU_{j-}^{\T} \gg)} \vnorm{ \Lambda_{j-} \hat{\qq} + \UU_{j-}^{\T} \gg}^2 + \vnorm{\eye -  \UU_{j-} \UU_{j-}^{\T} \gg}^2 \right\}, 
\end{align*}
from which, following similar steps as in the proof of \cref{lemma:MINRES_complexity_Sol}, we get the following corollary.
\begin{corollary}
	\label{cor:MINRES_complexity_Sol}
	Suppose $ \psip \geq 1 $. Consider any $ 1 \leq i \leq \psip $. For any $ 0 < \theta < 1 $, after at most 
	\begin{align*}
		t \geq \frac{\sqrt{\kappap_{i}}}{4} \log \left(\frac{4}{ \theta}\right),
	\end{align*}
	iterations of \cref{alg:MINRES}  (without the NPC detection mechanism of \hyperref[alg:MINRES:NPC]{Step 11}), we have 
	\begin{align*}
		\vnorm{\rrt}^{2} &\leq \vnorm{\left( \eye - \UU_{i+} \UU_{i+}^{\intercal} \right) \gg}^2 + \theta \vnorm{\UU_{i+} \UU_{i+}^{\intercal} \gg}^{2},
	\end{align*}
	and in particular,
	\begin{align*}
		\vnorm{\UU \UU^{\T} \rrt}^{2} &\leq \vnorm{\left( \UU\UUT - \UU_{i+}\UU_{i+}^{\T} \right) \gg}^2 + \theta \vnorm{\UU_{i+} \UU_{i+}^{\intercal} \gg}^{2}.
	\end{align*}
	If $ \psin \geq 1 $, then letting $\psip+\psiz+1 \leq j \leq \psi$, the statement also holds with $ \UU_{j-} $ and $ \kappan_{j}$, instead of $ \UU_{i+} $ and $\kappap_{i}$, respectively.
\end{corollary}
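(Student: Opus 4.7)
The plan is to mimic the argument in the proof of \cref{lemma:MINRES_complexity_Sol}, but using the residual bound given in the displayed inequality immediately preceding the corollary statement, which isolates either the positive or the negative $\gg$-relevant part of the spectrum. This is the reason the iteration count depends only on $\sqrt{\kappap_{i}}$ (resp.\ $\sqrt{\kappan_{j}}$) rather than the larger $\sqrt{\max\{\kappap_{i},\kappan_{j}\}}$ appearing in the lemma; the price is that the leftover term $\vnorm{(\eye - \UU_{i+}\UU_{i+}^{\T})\gg}^{2}$ now absorbs the contributions from the non-selected eigen-blocks $\UU_{i+}^{\complement}$, $\UU_{-}$, and $\UUp$.

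First, I would specialize that displayed inequality to a fixed $1 \leq i \leq \psip$, obtaining
\begin{align*}
\vnorm{\rrt}^{2} \;\leq\; \min_{\hat{\pp}\in \Klvt(\Lambda_{i+}, \UU_{i+}^{\T}\gg)} \vnorm{\Lambda_{i+}\hat{\pp} + \UU_{i+}^{\T}\gg}^{2} \;+\; \vnorm{(\eye - \UU_{i+}\UU_{i+}^{\T})\gg}^{2}.
\end{align*}
Since $\Lambda_{i+} \succ \zero$ has condition number $\kappap_{i} = \lambda_{1}/\lambda_{i}$, the classical MINRES bound for positive-definite systems (already invoked in the proof of \cref{lemma:MINRES_complexity_Sol}) gives
\begin{align*}
\min_{\hat{\pp}} \vnorm{\Lambda_{i+}\hat{\pp} + \UU_{i+}^{\T}\gg} \;\leq\; 2\,\vnorm{\UU_{i+}\UU_{i+}^{\T}\gg}\left(\frac{\sqrt{\kappap_{i}}-1}{\sqrt{\kappap_{i}}+1}\right)^{\!t}.
\end{align*}
Squaring and requiring $4\bigl((\sqrt{\kappap_{i}}-1)/(\sqrt{\kappap_{i}}+1)\bigr)^{2t} \leq \theta$, together with the standard estimate $\log\bigl((\sqrt{\kappa}+1)/(\sqrt{\kappa}-1)\bigr) \geq 2/\sqrt{\kappa}$, then yields the iteration count $t \geq (\sqrt{\kappap_{i}}/4)\log(4/\theta)$ and the first bound in the corollary.

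For the refined bound on $\vnorm{\UU\UUT\rrt}^{2}$, the key observation is that by \cref{lemma:HUUS} we have $\HH\sst \in \range(\UU)$, while by construction $\UUpT\gg = \zero$. Hence the residual $\rrt = -\gg - \HH\sst$ admits the orthogonal decomposition $\vnorm{\rrt}^{2} = \vnorm{\UU\UUT\rrt}^{2} + \vnorm{\UUp\UUpT\gg}^{2}$. Combining this with the orthogonal identity $\vnorm{(\eye - \UU_{i+}\UU_{i+}^{\T})\gg}^{2} = \vnorm{(\UU\UUT - \UU_{i+}\UU_{i+}^{\T})\gg}^{2} + \vnorm{\UUp\UUpT\gg}^{2}$, which holds because the columns of $\UU_{i+}$ lie inside $\range(\UU)$, and subtracting $\vnorm{\UUp\UUpT\gg}^{2}$ from both sides of the first bound yields the second. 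The negative-curvature case ($\psin \geq 1$, with $\UU_{j-}$ and $\kappan_{j}$) is handled symmetrically: only the sign of $\Lambda_{j-}$ changes, which does not affect the classical MINRES rate since it depends only on the condition number. The main (mild) obstacle throughout is keeping the various orthogonal decompositions of $\gg$ straight across the $\UU_{+}$/$\UU_{-}$/$\UUp$ splitting; once this bookkeeping is in place, everything else is routine.
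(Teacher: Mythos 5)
Your proposal is correct and follows essentially the same approach as the paper, which simply states that the corollary follows from the displayed inequality ``following similar steps as in the proof of Lemma~\ref{lemma:MINRES_complexity_Sol}''; you have filled in exactly those steps (the classical positive-definite MINRES rate applied to $\Lambda_{i+}$ alone, the $\log$ estimate, and the derivation of the $\vnorm{\UU\UUT\rrt}^2$ bound from the first via the orthogonal splitting $\vnorm{\rrt}^2 = \vnorm{\UU\UUT\rrt}^2 + \vnorm{\UUp\UUpT\gg}^2$, justified by Lemma~\ref{lemma:HUUS}).
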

\cref{cor:MINRES_complexity_Sol} only involves the positive (or negative) $\gg$-relevant spectrum of $ \HH $ and hence is more appealing than \cref{lemma:MINRES_complexity_Sol} when either of $ \kappan $ (or $ \kappap $) is very large. Of course, this comes at the cost of a larger residual error on the right-hand.

The following Lemma will be used in conjunction with \cref{lemma:MINRES_complexity_Sol,cor:MINRES_complexity_Sol} to establish the iteration complexity of \cref{alg:MINRES} to obtain a solution which satisfies the inexactness condition \cref{eq:inexactness}.
\begin{lemma}
	\label{lemma:inexactness}
	Let $ \UU $ be as in \cref{eq:H_decomp_PSD}. If for any $ \eta > 0 $, we have
	\begin{align*}
		\vnorm{\UU\UUT\rrtp}^{2} \leq \frac{\eta^{2}}{\displaystyle \left( \vnorm{\UUT\HH\UU}^{2} + \eta^{2} \right)} \vnorm{\UU\UUT\gg}^{2},
	\end{align*}
	then the inexactness condition \cref{eq:inexactness} holds.
\end{lemma}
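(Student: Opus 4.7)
My plan is to unpack both sides of the desired inexactness condition $\vnorm{\HH\rrtp} \leq \eta \vnorm{\HH\sstp}$ in terms of the projection $\UU\UUT$ onto the $\gg$-relevant eigenspace, so that the hypothesis appears essentially by a direct algebraic rearrangement. The three key ingredients will be (i) Lemma~\ref{lemma:HUUS}, which lets me replace $\HH\zz$ by $\HH\UU\UUT\zz$ for any Krylov vector $\zz$, (ii) the fact that $\UUpT\gg = \zero$, so $\gg = \UU\UUT\gg$, and (iii) the MINRES optimality relation $\dotprod{\HH\sstp,\rrtp}=0$ (since $\sstp$ minimizes $\vnorm{\HH\ss+\gg}^2$ over $\Klv_{t-1}(\HH,\gg)$, the residual $\rrtp=-\gg-\HH\sstp$ is orthogonal to $\HH\Klv_{t-1}$).

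To upper bound the left-hand side, I would use $\rrtp \in \Klv_t(\HH,\gg)$ together with Lemma~\ref{lemma:HUUS} to write $\HH\rrtp = \HH\UU\UUT\rrtp$, and then exploit that $\UU$ spans an $\HH$-invariant subspace so that $\HH\UU = \UU\Lambda$. This yields
\begin{align*}
\vnorm{\HH\rrtp} \;=\; \vnorm{\UU\Lambda\UUT\rrtp} \;=\; \vnorm{\Lambda\UUT\rrtp} \;\leq\; \vnorm{\Lambda}\,\vnorm{\UUT\rrtp} \;=\; \vnorm{\UUT\HH\UU}\,\vnorm{\UU\UUT\rrtp},
\end{align*}
using orthonormality of the columns of $\UU$ and the identity $\UUT\HH\UU=\Lambda$.

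To lower bound the right-hand side, I would first argue that $\rrtp$ itself lies entirely in $\range(\UU)$: since $\UUpT\gg=\zero$ and $\UUpT\HH\UU=\zero$ (as $\UU$ and $\UUp$ span orthogonal invariant subspaces), Lemma~\ref{lemma:HUUS} gives $\UUpT\HH\sstp = \UUpT\HH\UU\UUT\sstp = \zero$, whence $\UUpT\rrtp = -\UUpT\gg-\UUpT\HH\sstp = \zero$ and therefore $\rrtp = \UU\UUT\rrtp$. Using this together with the MINRES orthogonality $\dotprod{\HH\sstp,\rrtp}=0$ in the Pythagorean identity $\vnorm{\gg}^2 = \vnorm{\HH\sstp+\rrtp}^2 = \vnorm{\HH\sstp}^2+\vnorm{\rrtp}^2$ (recall $-\gg=\HH\sstp+\rrtp$), and invoking $\vnorm{\gg}=\vnorm{\UU\UUT\gg}$, I get
\begin{align*}
\vnorm{\HH\sstp}^2 \;=\; \vnorm{\UU\UUT\gg}^2 - \vnorm{\UU\UUT\rrtp}^2.
\end{align*}

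Finally, the target inequality $\vnorm{\HH\rrtp}^2 \leq \eta^2 \vnorm{\HH\sstp}^2$ becomes
\begin{align*}
\vnorm{\UUT\HH\UU}^2\,\vnorm{\UU\UUT\rrtp}^2 \;\leq\; \eta^2\bigl(\vnorm{\UU\UUT\gg}^2 - \vnorm{\UU\UUT\rrtp}^2\bigr),
\end{align*}
which, after moving the $\vnorm{\UU\UUT\rrtp}^2$ term to the left, is exactly the hypothesis. There is no real obstacle here beyond careful bookkeeping of the invariant-subspace identities $\HH\UU=\UU\Lambda$, $\UUpT\HH\UU=\zero$, and $\UUpT\gg=\zero$; once those are in hand, the proof is essentially a two-line algebraic rearrangement.
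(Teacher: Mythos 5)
Your proof is correct and follows essentially the same line of reasoning as the paper: both rely on \cref{lemma:HUUS} to replace $\HH\rrtp$ by $\HH\UU\UUT\rrtp$, bound this by $\vnorm{\UUT\HH\UU}\vnorm{\UU\UUT\rrtp}$, and then use the Pythagorean identity $\vnorm{\HH\sstp}^{2}=\vnorm{\UU\UUT\gg}^{2}-\vnorm{\UU\UUT\rrtp}^{2}$ before a direct algebraic rearrangement of the hypothesis. You actually fill in more detail than the paper does --- in particular, the observations that $\UUpT\rrtp=\zero$ (so $\rrtp=\UU\UUT\rrtp$) and that the Pythagorean identity follows from the MINRES least-squares orthogonality $\dotprod{\HH\sstp,\rrtp}=0$, which the paper simply asserts.
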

\begin{proof}
	First recall that $ \rrtp \in \Klv_{t}(\HH,\gg) $. We have 
	\begin{align*}
		&\vnorm{\UUT\HH\UU}^{2} \vnorm{\UU\UUT\rrtp}^{2} \leq \eta^{2} \left(\vnorm{\UU\UUT\gg}^{2} - \vnorm{\UU\UUT\rrtp}^{2}\right).
	\end{align*}
	We also get $ \vnorm{\HH\sstp}^{2} = \vnorm{\HH\UU\UUT\sstp}^{2} = \vnorm{\UU\UUT\gg}^{2} - \vnorm{\UU\UUT\rrtp}^{2} $. This, coupled with the fact that $\vnorm{\HH\UU\UUT\rrtp} \leq  \vnorm{\UUT\HH\UU} \vnorm{\UU\UUT \rrtp} $ and noting that by \cref{lemma:HUUS} we have $ \vnorm{\HH\rrtp} = \vnorm{\HH \UU\UUT\rrtp} $, implies the desired result.
\end{proof}
For example, suppose $ \eta $ is large enough such that 
\begin{align*}
	\frac{\eta^{2}}{\displaystyle  \left( \vnorm{\UUT\HH\UU}^{2} + \eta^{2} \right)} > \frac{\vnorm{\left( \UU\UUT - \UU_{i+}\UU_{i+}^{\T} \right) \gg}^2}{\vnorm{\UU\UUT\gg}^2}.
\end{align*}
From \cref{lemma:MINRES_complexity_Sol}, after at most 
\begin{align*}
	t \geq \frac{\sqrt{\max\{\kappap_{i},\kappan_{j}\}}}{4} \log \left(\frac{4}{{\eta^{2}}/{\displaystyle  \left( \vnorm{\UUT\HH\UU}^{2} + \eta^{2} \right)} - \vnorm{\left( \UU\UUT - \UU_{i+}\UU_{i+}^{\T} \right) \gg}^2/\vnorm{\UU\UUT\gg}^2}\right) +1,
\end{align*}
iteration we have a solution $ \sstp $, which satisfies \cref{eq:inexactness}.

\subsubsection{Complexity of Finding NPC Direction}
\label{sec:MINRES_complexity_NPC}

\cref{thm:T_indefinite} shows that as long as $ \HH $ contains any non-positive $\gg$-relevant eigenvalues, MINRES is guaranteed to detect a NPC direction. We now establish a worst-case complexity of MINRES for detecting such directions. To detect an NPC direction, we do not necessarily need to estimate the smallest $\gg$-relevant eigenvalue of $ \HH $. As long as we can crudely approximate \emph{any} negative $\gg$-relevant eigenvalue, we can safely extract the corresponding NPC direction. This observation is basis in the derivation of \cref{lemma:Lanczos_convergence_T_indefinite}. The proof idea below draws upon that in \cite[Sections 4.4 and 6.6]{saad2011numerical}. 

\begin{lemma}
	\label{lemma:Lanczos_convergence_T_indefinite}
	Suppose $ \psip \geq 1 $, $ \psin \geq 1 $, and let $ \zeta_{t} $ be the smallest eigenvalue of $ \TT_{t}$ for $ t \leq g $. For any $ \psip+\psiz+1 \leq j \leq \psi $, we have
	\begin{align*}
		\zeta_{t} - \lambda_{j} \leq  4 \left(\frac{1 - \nuj}{\nuj} \right) \left(\lambda_{1} - \lambda_{j}\right) \left(\frac{\sqrt{\kappa_{j}+1} - 1}{\sqrt{\kappa_{j}+1} + 1}\right)^{2(t-1)},
	\end{align*}
	where
	\begin{align}
		\label{eq:tkappa}
		\kappa_{j} \triangleq \frac{\lambda_1}{-\lambda_{j}},
	\end{align}
	and
	\begin{align}
		\label{eq:nuj}
		\nuj \defeq \frac{\vnorm{\UU_{j-}\UU_{j-}^{\T} \gg}^2}{\vnorm{ \displaystyle \UU\UUT \gg}^2},
	\end{align}
	and $ \lambda_{i} $, $ \UU $ and $\UU_{j-}$ are as in \cref{eq:H_decomp_PSD,eq:H_decomp_PSD_ij}.
\end{lemma}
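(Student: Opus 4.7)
The plan is to apply the classical Kaniel--Saad variational strategy, whose main ingredients are the Courant--Fischer characterization
\[
\zeta_t \;=\; \min_{0 \neq q \in \Klvt(\HH,\gg)} \frac{\dotprod{q,\HH q}}{\dotprod{q,q}},
\]
a polynomial test vector, and a Chebyshev growth estimate. Since every $q \in \Klvt(\HH,\gg)$ has the form $q = p(\HH)\gg$ for some polynomial $p$ of degree at most $t-1$, substituting and diagonalizing via the $\gg$-relevant eigenbasis of \cref{eq:H_lambda_U}, then discarding the nonpositive ($i \ge j$) contributions from the numerator, gives
\[
\zeta_t - \lambda_j \;\le\; \frac{\sum_{i<j}(\lambda_i-\lambda_j)\,\alpha_i\, p(\lambda_i)^2}{\sum_{i=1}^{\psi}\alpha_i\, p(\lambda_i)^2}, \qquad \alpha_i \defeq |\dotprod{u_i,\gg}|^2.
\]

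The key choice will be the Chebyshev polynomial $p(\lambda) = T_{t-1}\!\big(1 - 2\lambda/\lambda_1\big)$, which is bounded in absolute value by $1$ on $[0,\lambda_1]$ (covering the positive $\gg$-relevant spectrum) and is positive and strictly increasing as $\lambda$ decreases below $0$; in particular $p(\lambda_i)^2 \le p(\lambda_j)^2$ for $\lambda_i \in (\lambda_j,\lambda_1]$ and $p(\lambda_i)^2 \ge p(\lambda_j)^2$ for $\lambda_i \le \lambda_j$. Using $\lambda_i - \lambda_j \le \lambda_1 - \lambda_j$ in the numerator and lower-bounding the denominator by retaining only the $i \ge j$ terms, so that $\sum_i \alpha_i p(\lambda_i)^2 \ge p(\lambda_j)^2\, \nu_j \|\gg\|^2$, I would combine these bounds into an estimate of the form $(\lambda_1-\lambda_j)(1-\nu_j)/(\nu_j p(\lambda_j)^2)$.

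The final step is to invoke $T_{t-1}(\mu) \ge \tfrac12 (\mu + \sqrt{\mu^2 - 1})^{t-1}$ for $\mu \ge 1$, specialized to $\mu = 1 + 2/\kappa_j$. A short algebraic manipulation using $(\sqrt{\kappa_j+1}+1)^2 = \kappa_j + 2 + 2\sqrt{\kappa_j+1}$ and $(\sqrt{\kappa_j+1}+1)(\sqrt{\kappa_j+1}-1) = \kappa_j$ yields
\[
\mu + \sqrt{\mu^2 - 1} \;=\; \frac{\sqrt{\kappa_j+1}+1}{\sqrt{\kappa_j+1}-1},
\]
so that $1/p(\lambda_j)^2 \le 4\,\big((\sqrt{\kappa_j+1}-1)/(\sqrt{\kappa_j+1}+1)\big)^{2(t-1)}$, and substitution produces the claimed inequality. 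I expect the main technical obstacle to be controlling the numerator's contribution from any negative $\gg$-relevant eigenvalues lying strictly in $(\lambda_j,0)$, where $p$ is no longer bounded by $1$; the monotonicity estimate $p(\lambda_i)^2 \le p(\lambda_j)^2$ on this sub-interval must be exploited so that this contribution cancels against the matching $p(\lambda_j)^2$ in the denominator and is absorbed into the $(1-\nu_j)$ bad-mass factor, rather than producing a non-decaying residual term in the final bound.
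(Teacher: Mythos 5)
Your overall argument --- Courant--Fischer characterization, polynomial test vector, Chebyshev growth estimate --- is exactly the paper's Kaniel--Saad-style proof, and the two Chebyshev shifts (your $1-2\lambda/\lambda_1$ versus the paper's $1+2(\lambda_{\psip}-2\lambda)/(2\lambda_1-\lambda_{\psip})$) coincide after the paper's own reduction to $C_{t-1}(1+2/\kappa_j)$.

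However, the obstacle you flag in your last sentence is genuine, and the fix you conjecture does not close it: bounding the interior-negative numerator contribution via $p^2(\lambda_i)\le p^2(\lambda_j)$ and the denominator via $\sum_i\alpha_i p^2(\lambda_i)\ge p^2(\lambda_j)\sum_{i\ge j}\alpha_i$ leaves a quotient of order $\bigl(\sum_{\psip+\psiz+1\le i<j}\alpha_i\bigr)/\bigl(\sum_{i\ge j}\alpha_i\bigr)$, which carries no $p^{-2}(\lambda_j)$ factor and therefore does not decay with $t$; it is not absorbed into $(1-\nuj)/(\nuj\,p^2(\lambda_j))$. What you should be aware of is that the paper's own proof contains the same gap. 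Its displayed step, which replaces $\sum_{i\ge\psip+\psiz+1}\lambda_i c_i^2 p^2(\lambda_i)$ by $\lambda_j\sum_{i\ge j}c_i^2 p^2(\lambda_i)$ in the numerator while simultaneously deleting $\sum_{\psip+\psiz+1\le i<j}c_i^2 p^2(\lambda_i)$ from the denominator, is not a valid inequality once interior negative $\gg$-relevant eigenvalues carry mass. For instance, with $\HH=\mathrm{diag}(1,-10^{-2},-1)$, $c_1^2=0.05$, $c_2^2=0.8$, $c_3^2=0.15$, $j=\psi=3$ and $t=2$, one computes $\zeta_2-\lambda_3\approx 0.35$, while
\[
\min_{p\in\mathcal{P}_1}\frac{\sum_{i\le\psip}(\lambda_i-\lambda_j)c_i^2 p^2(\lambda_i)}{\sum_{i\le\psip}c_i^2 p^2(\lambda_i)+\sum_{i\ge j}c_i^2 p^2(\lambda_i)}=0,
\]
attained at $p(\lambda)=1-\lambda$ (which annihilates the numerator but not the denominator); the chain would then assert $0.35\le 0$. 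The stated final bound survives in this example only thanks to the loose factor of $4$ in the Chebyshev denominator estimate, not because the chain is correct. A genuine repair is to choose the Chebyshev polynomial bounded on $[\lambda_{j-1},\lambda_1]$ rather than on $[\lambda_{\psip},\lambda_1]$, so that $p^2(\lambda_i)\le 1$ for \emph{all} $i<j$ and the interior mass really is absorbed into $1-\nuj$, at the price of replacing $\kappa_j$ with a condition number tied to the spectral gap $\lambda_{j-1}-\lambda_j$; alternatively, one can restrict to $j=\psip+\psiz+1$, for which the interior index set is empty and both your argument and the paper's go through unchanged.
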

\begin{proof}   
	We first note that
	\begin{align*}
		\zeta_{t} = \min_{p \in \mathcal{P}_{t-1}} \frac{\dotprod{p(\HH)\gg,\HH p(\HH)\gg}}{\dotprod{p(\HH)\gg,p(\HH)\gg}} \leq \min_{p \in \mathcal{P}_{t-1}} \frac{\dotprod{p(\HH) \UU\UUT\gg,\HH p(\HH)\UU\UUT\gg}}{\dotprod{p(\HH)\UU\UUT\gg,p(\HH)\UU\UUT\gg}},
	\end{align*}
	where $ \mathcal{P}_{t-1} $ is the space of all polynomials of degree not exceeding $ t-1 $ and the inequality follows by \cref{lemma:HUUS} and the fact that $ \vnorm{\UU\UUT p(\HH)\gg} \leq \vnorm{p(\HH)\gg} $. 
	Denote
	\begin{align*}
		\mathcal{I} \defeq \left\{1,\ldots,\psip,\psip+\psiz+1,\ldots,\psi\right\}.
	\end{align*}
	Note that $ \abs{\mathcal{I}} = \psi $ if $ \psiz = 0 $, otherwise $ \abs{\mathcal{I}} = \psi-1 $.
	With $ \UU $ as in \cref{eq:H_decomp_PSD} and defining
	\begin{align*}
		c_i \defeq \frac{\dotprod{\uu_{i},\gg}}{\vnorm{\displaystyle \UU\UUT\gg}},
	\end{align*}
	we have 
	\begin{align*}
		\UU\UUT\gg = \sum_{i \in \mathcal{I}} \dotprod{\uu_{i},\gg} \uu_{i} = \vnorm{\UU\UUT\gg} \sum_{i \in \mathcal{I}} c_i \uu_{i}.
	\end{align*}
	Noting that $ \lambda_{i} < \lambda_{j} \leq 0 $ for any $ \psip+\psiz+1 \leq j < i < \psi$, it follows that for any $ \psip+\psiz+1 \leq j \leq \psi $, we have
	\begin{align*}
		\zeta_{t} - \lambda_{j} &\leq \min_{p \in \mathcal{P}_{t-1}}  \left( \frac{\displaystyle \dotprod{p(\HH) \UU\UUT \gg, \HH p(\HH) \UU\UUT \gg}}{\displaystyle \dotprod{p(\HH) \UU\UUT \gg, p(\HH) \UU\UUT \gg}} - \lambda_{j} \right)  = \min_{p \in \mathcal{P}_{t-1}} \left(\frac{\displaystyle \sum_{i \in \mathcal{I}} \lambda_i c_{i}^{2} p^{2}(\lambda_i)}{\displaystyle \sum_{i \in \mathcal{I}} c_{i}^{2} p^{2}(\lambda_i)} - \lambda_{j}\right) \\
		&= \min_{p \in \mathcal{P}_{t-1}} \left(\frac{\displaystyle \sum_{i \leq \psip} \lambda_i c_{i}^{2} p^{2}(\lambda_i) + \sum_{i \geq \psip+\psiz+1} \lambda_i c_{i}^{2} p^{2}(\lambda_i)}{\displaystyle \sum_{i \in \mathcal{I}} c_{i}^{2} p^{2}(\lambda_i)} - \lambda_{j}\right) \\
		&\leq \min_{p \in \mathcal{P}_{t-1}} \left(\frac{\displaystyle \sum_{i \leq \psip} \lambda_i c_{i}^{2} p^{2}(\lambda_i) + \sum_{j \leq i \leq \psi} \lambda_i c_{i}^{2} p^{2}(\lambda_i)}{\displaystyle \sum_{i \in \mathcal{I}} c_{i}^{2} p^{2}(\lambda_i)} - \lambda_{j}\right)\\
		&\leq \min_{p \in \mathcal{P}_{t-1}} \left(\frac{\displaystyle \sum_{i \leq \psip} \lambda_i c_{i}^{2} p^{2}(\lambda_i) + \lambda_{j} \sum_{j \leq i \leq \psi} c_{i}^{2} p^{2}(\lambda_i)}{\displaystyle \sum_{i \leq \psip} c_{i}^{2} p^{2}(\lambda_i) + \sum_{j \leq i \leq \psi} c_{i}^{2} p^{2}(\lambda_i)} - \lambda_{j}\right)\\
		&= \min_{p \in \mathcal{P}_{t-1}} \left(\frac{\displaystyle \sum_{i \leq \psip} \left(\lambda_{i} -\lambda_{j}\right) c_{i}^{2} p^{2}(\lambda_i)}{\displaystyle \sum_{i \leq \psip} c_{i}^{2} p^{2}(\lambda_i) + \sum_{j \leq i \leq \psi} c_{i}^{2} p^{2}(\lambda_i)}\right)\\
		&\leq \left(\lambda_{1} -\lambda_{j}\right)  \left(\frac{\displaystyle \sum_{i \leq \psip} c_{i}^{2} p^{2}(\lambda_i)}{\displaystyle \sum_{i \leq \psip} c_{i}^{2} p^{2}(\lambda_i) + \sum_{j \leq i \leq \psi} c_{i}^{2} p^{2}(\lambda_i)}\right), \quad \forall p \in \mathcal{P}_{t-1}.
	\end{align*}
	Now, take 
	\begin{align*}
		p(\lambda) = C_{t-1}\left(1 + 2 \left( \frac{\lambda_{\psip} - 2\lambda}{2\lambda_1 - \lambda_{\psip}} \right)\right),
	\end{align*}
	where $ C_{t} $ is the Chebyshev polynomial of the first kind of degree $ t $ defined as 
	\begin{align*}
		C_{t}(x) \triangleq \begin{cases}
			\cos(t \cos^{-1}(x)), \quad \abs{x} \leq 1, \\ \\
			\hf \left(\left(x + \sqrt{x^2 - 1}\right)^{t} + \left(x + \sqrt{x^2 - 1}\right)^{-t}\right), \quad \abs{x} > 1.
		\end{cases} 
	\end{align*}
	Since $ \abs{p(\lambda_{i})} \leq 1, \; 1 \leq i \leq \psip $, we have
	\begin{align*}
		\zeta_{t} - \lambda_{j} &\leq \left(\lambda_{1} -\lambda_{j}\right)  \left(\frac{\displaystyle \sum_{i \leq \psip} c_{i}^{2}}{\displaystyle \sum_{i \leq \psip} c_{i}^{2} p^{2}(\lambda_i) + \sum_{j \leq i \leq \psi} c_{i}^{2} p^{2}(\lambda_i)}\right) \leq \left(\lambda_{1} -\lambda_{j}\right)  \left(\frac{\displaystyle \sum_{\substack{{i \in \mathcal{I}} \\ {i \leq j-1}}} c_{i}^{2}}{\displaystyle \sum_{j \leq i \leq \psi} c_{i}^{2} p^{2}(\lambda_i)}\right).
	\end{align*}
	It is easy to show that $ C_{t}(x) $ is increasing on $ x > 1 $. Indeed, we have 
	\begin{align*}
		\frac{\df C_{t}(x)}{\df x} &= \hf \left(1 + \frac{2 x}{\sqrt{x^2 - 1}}\right)\left(t \left(x + \sqrt{x^2 - 1}\right)^{t-1} - t \left(x + \sqrt{x^2 - 1}\right)^{-t-1}\right) \\
		&= \frac{t}{2} \left(1 + \frac{2 x}{\sqrt{x^2 - 1}}\right) \frac{\left(x + \sqrt{x^2 - 1}\right)^{2t} - 1}{\left(x + \sqrt{x^2 - 1}\right)^{t+1}} > 0.
	\end{align*}
	Hence, $ p(\lambda_{j}) < p(\lambda_{j+1}) < \ldots < p(\lambda_{\psi}) $. It follows that 
	\begin{align*}
		\zeta_{t} - \lambda_{j} \leq \frac{(\lambda_1 - \lambda_{j})}{p^{2}(\lambda_{j})} \left(\frac{\displaystyle \sum_{\substack{{i \in \mathcal{I}} \\ {i \leq j-1}}} c_{i}^{2}}{\displaystyle \sum_{j \leq i \leq \psi} c_{i}^{2} }\right) = \frac{(\lambda_1 - \lambda_{j})}{p^{2}(\lambda_{j})} \left(\frac{\displaystyle \sum_{i \in \mathcal{I}} c_i^2 - \sum_{j \leq i \leq \psi} c_{i}^{2}}{\displaystyle \sum_{j \leq i \leq \psi} c_{i}^{2} }\right).
	\end{align*}
	From
	\begin{align*}
		\frac{\lambda_{\psip} - 2\lambda_{j}}{2\lambda_1 - \lambda_{\psip}} \geq \frac{-\lambda_{j}}{\lambda_1},
	\end{align*}
	we also have 
	\begin{align*}
		p(\lambda_{j}) &= C_{t-1}\left( 1 + 2 \left( \frac{\lambda_{\psip} - 2\lambda_{j}}{2\lambda_1 - \lambda_{\psip}} \right) \right) \geq C_{t-1}\left(1 - \frac{2\lambda_{j}}{\lambda_{1}}\right) = C_{t-1}\left(1 + \frac{2}{\kappa_{j}}\right) \\
		&= \hf \left(\left(1 + \frac{2}{\kappa_{j}} + \sqrt{\left(1 + \frac{2}{\kappa_{j}}\right)^2 - 1}\right)^{t-1} + \left(1 + \frac{2}{\kappa_{j}} + \sqrt{\left(1 + \frac{2}{\kappa_{j}}\right)^2 - 1}\right)^{1-t}\right) \\
		&= \hf \left(\left(\frac{\kappa_{j}+2 + 2\sqrt{\kappa_{j}+1}}{\kappa_{j}}\right)^{t-1} + \left(\frac{\kappa_{j}+2 + 2\sqrt{\kappa_{j}+1}}{\kappa_{j}}\right)^{1-t}\right) \\
		&= \hf \left(\left(\frac{\sqrt{\kappa_{j}+1} + 1}{\sqrt{\kappa_{j}+1} - 1}\right)^{t-1} + \left(\frac{\sqrt{\kappa_{j}+1} - 1}{\sqrt{\kappa_{j}+1} + 1}\right)^{1-t}\right) \geq \hf \left(\frac{\sqrt{\kappa_{j}+1} + 1}{\sqrt{\kappa_{j}+1} - 1}\right)^{t-1},
	\end{align*}
	which gives our desired result.
\end{proof}

As it can be seen, the bound in \cref{lemma:Lanczos_convergence_T_indefinite} is controlled by the proportion \cref{eq:nuj} and also the ``condition number'' $ \kappa_{j} $, which shed light on the factors that affect the performance of MINRES in detecting a negative curvature direction. Indeed, suppose $\psip \geq 1$, $ \psin \geq 1 $ and $ j \geq \psip+\psiz+1 $. The ratio $ \nuj $ measures the portion of $ \gg $ that lies in the relevant sub-space $ \range(\UU_{j-}) $ compared with the ``left-over'' portion that lies in the orthogonal complement in $ \range(\UU) $. It is natural to expect that $ \lambda_{j} $ can be estimated faster when $ \UU_{j-}\UU_{j-}^{\T} \gg $ constitutes a relatively large portion of $ \UU\UUT \gg $. Also, the ``condition number'' $ \kappa_j $ indicates that MINRES can detect a NPC direction faster in cases where $ \lambda_{j} $ is sufficiently larger, in magnitude, compared to $ \lambda_{1} $. In fact, the term ``condition number'' is somewhat a misnomer since $ \kappa_{j} $ can indeed be smaller than one if $ \abs{\lambda_{j}} \geq \lambda_{1}$.

\cref{lemma:Lanczos_convergence_T_indefinite} implies that, for any $ \epsilon > 0 $, after at most  
\begin{align*}
	t = \min \left\{\left \lceil \left(\frac{\sqrt{\kappa_{j}+1}}{4} \right)\log\left({4 \left(1 - \sum_{i=j}^{\psi} c_{i}^{2}\right)}/{\left(\epsilon \sum_{i=j}^{\psi} c_{i}^{2}\right)}\right)+1 \right \rceil, g \right\}
\end{align*}
iterations, for any $ 2 \leq j \leq \psi$, we have 
\begin{align}
	\label{eq:teps_min_np} 
	\zeta_t - \lambda_{j} \leq \epsilon \left(\lambda_{1} - \lambda_{j}\right).
\end{align}
Indeed, it suffices to find $ t $ large enough such that
\begin{align*}
	4 \left( \frac{1 - \sum_{i=j}^{\psi} c_{i}^{2}}{\sum_{i=j}^{\psi} c_{i}^{2}}\right) \left(\frac{\sqrt{\kappa_{j}+1} - 1}{\sqrt{\kappa_{j}+1} + 1}\right)^{2(t-1)} \leq \epsilon.
\end{align*}
If $ 4 \left(1 - \sum_{i=j}^{\psi} c_{i}^{2} \right)/\left(\sum_{i=j}^{\psi} c_{i}^{2}\right) \leq \epsilon$, this holds trivially for any $ t $. Otherwise, this is satisfied if  we have 
\begin{align*}
	t &\geq \hf \log\left(\frac{\epsilon \sum_{i=j}^{\psi} c_{i}^{2}}{4(1-\sum_{i=j}^{\psi} c_{i}^{2})}\right)/ \log \left(\frac{\sqrt{\kappa_{j}+1} - 1}{\sqrt{\kappa_{j}+1} + 1}\right) + 1  \\
	&= \hf \log\left(\frac{4(1-\sum_{i=j}^{\psi} c_{i}^{2})}{\epsilon \sum_{i=j}^{\psi} c_{i}^{2}}\right)/ \log \left(1 + \frac{2}{\sqrt{\kappa_{j}+1} - 1}\right) + 1.
\end{align*}
Now, the last bit is done by noting that for $ b > 0 $ we have
\begin{align*}
	\log\left(1 + \frac{1}{b}\right) \geq \frac{2}{2b + 1}.
\end{align*}

If $ \psin = \psiz = 0 $, then by \cref{thm:T_indefinite}, no NPC direction is detected within MINRES iterations.
Suppose, $ \psin = 0 $ but $ \psiz = 1 $. In this case, as per \cref{lemma:T_PSD}, a zero curvature direction is detected for the first time only at the very last iteration.
However, if $ \psip = 0 $, i.e., $ \HH $ has no positive $\gg$-relevant eigenvalues, then a NPC direction is detected at the very first iteration. Indeed, letting 
$ \rr_{0} = \gg = \sum_{i=1}^{\psi} \xi_{i} \uu_{i} $, we have 
\begin{align*}
	\dotprod{\rr_{0}, \HH \rr_{0}} &= \sum_{i=1}^{\psi} \xi_{i}^{2} \lambda_{i} \dotprod{\uu_{i}, \uu_{i}} \leq 0.
\end{align*}
\cref{cor:MINRES_complexity_NC} gives a complexity for detecting the NPC Condition \cref{eq:NPC} for non-trivial cases where $ \psin \geq 1 $ and $ \psip \geq 1 $.

\begin{corollary}
	\label{cor:MINRES_complexity_NC}
	Suppose $ \HH $ has at least one negative and one positive $\gg$-relevant eigenvalues, i.e., $ \psin \geq 1 $ and $ \psip \geq 1 $. 
	After at most $t = \min \left\{ T , g \right\}$ iterations of \cref{alg:MINRES} (without the inexactness mechanism of \hyperref[alg:MINRES:SOL]{Step 7}), the NPC Condition \cref{eq:NPC} is satisfied, where
	\begin{align*}
		T &\defeq \min \left\{ T_{j} \; \mid \; \psip+\psiz+1 \leq j \leq \psi  \right\}, \\
		T_{j} &\defeq \left \lceil \left(\frac{\sqrt{\kappa_{j}+1}}{4} \right)\log\left[4 \left(\kappa_{j} + 1 \right) \left(\frac{1-\nuj}{\nuj}\right)\right] + 1 \right \rceil,
	\end{align*}
	where $ \UU $, $\UU_{j-} $, $ \kappa_{j} $ and $\nu_{j} $ are as in \cref{eq:H_decomp_PSD,eq:H_decomp_PSD_ij,eq:tkappa,eq:nuj}, respectively.
\end{corollary}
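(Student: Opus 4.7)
The plan is to reduce the NPC detection problem to a sufficient smallness of the smallest eigenvalue $\zeta_t$ of $\TT_t$, and then invoke \cref{lemma:Lanczos_convergence_T_indefinite}. Concretely, since $\psi^{-} \geq 1$, \cref{thm:T_indefinite} guarantees that MINRES detects the NPC condition \cref{eq:NPC} at some iteration $t \leq g$, which justifies the $g$-cap in the bound. By (the referenced) \cref{lemma:MINRES_NPC_detector}, the NPC condition \cref{eq:NPC} at iteration $t$ is equivalent to $\TT_t \not\succ \zero$, i.e., $\zeta_t \leq 0$. Hence, our task reduces to finding the smallest $t$ for which $\zeta_t \leq 0$ can be certified from the convergence bound.

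Fix an arbitrary index $j$ with $\psi^{+} + \psi^{0} + 1 \leq j \leq \psi$, so that $\lambda_{j} < 0$. By \cref{lemma:Lanczos_convergence_T_indefinite},
\begin{align*}
\zeta_{t} \;\leq\; \lambda_{j} + 4\left(\frac{1-\nu_{j}}{\nu_{j}}\right)(\lambda_{1} - \lambda_{j})\left(\frac{\sqrt{\kappa_{j}+1}-1}{\sqrt{\kappa_{j}+1}+1}\right)^{2(t-1)}.
\end{align*}
Requiring the right-hand side to be non-positive and rearranging (using $-\lambda_{j} = |\lambda_{j}|$ and $\lambda_{1} - \lambda_{j} = |\lambda_{j}|(\kappa_{j}+1)$) yields
\begin{align*}
\left(\frac{\sqrt{\kappa_{j}+1}+1}{\sqrt{\kappa_{j}+1}-1}\right)^{2(t-1)} \;\geq\; 4(\kappa_{j}+1)\left(\frac{1-\nu_{j}}{\nu_{j}}\right).
\end{align*}

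Taking logarithms and using the elementary inequality $\log(1 + 1/b) \geq 2/(2b+1)$ with $b = (\sqrt{\kappa_{j}+1}-1)/2$, exactly as in the derivation of \cref{eq:teps_min_np} following \cref{lemma:Lanczos_convergence_T_indefinite}, the condition above is implied by
\begin{align*}
t \;\geq\; \frac{\sqrt{\kappa_{j}+1}}{4}\log\!\left[4(\kappa_{j}+1)\,\frac{1-\nu_{j}}{\nu_{j}}\right] + 1.
\end{align*}
This gives exactly $T_{j}$, provided the argument of the logarithm exceeds one; otherwise the condition already holds at $t=1$, which is why we take the maximum with $1$ in the definition of $T_{j}$. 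Since $j$ was arbitrary in the admissible range, minimizing over $j$ yields $T$, and capping at $g$ (which is a valid termination index by \cref{thm:T_indefinite}) gives the stated bound $t = \min\{T, g\}$.

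The only non-routine piece is setting the target accuracy $\epsilon$ in \cref{eq:teps_min_np} to the value $\epsilon = 1/(\kappa_{j}+1)$ that is precisely what is needed to push $\zeta_{t}$ below zero; everything else is a direct substitution and the logarithm inequality already used in the paragraph following \cref{lemma:Lanczos_convergence_T_indefinite}.
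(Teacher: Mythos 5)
Your proof is correct and follows essentially the same route as the paper's: both proofs reduce NPC detection to certifying $\zeta_t \leq 0$ via \cref{lemma:MINRES_NPC_detector}, then invoke \cref{lemma:Lanczos_convergence_T_indefinite} (equivalently, plug $\epsilon = 1/(\kappa_j+1) = -\lambda_j/(\lambda_1-\lambda_j)$ into \cref{eq:teps_min_np}) and minimize over the admissible $j$. You re-derive the intermediate algebra rather than citing \cref{eq:teps_min_np} directly, but the key substitution and the $\log(1+1/b)\geq 2/(2b+1)$ step are identical to what the paper does.
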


\begin{proof}   
	Let $ \lambda_{j} < 0 $ be a negative $\gg$-relevant eigenvalue of $ \HH $ for some $ \psip+\psiz+1 \leq j \leq \psi $. Letting $ \epsilon \leq - \lambda_{j}/(\lambda_1 - \lambda_{j})$ in \cref{eq:teps_min_np}, we get
	\begin{align*}
		\zeta_t \leq \epsilon \left(\lambda_{1} - \lambda_{j}\right) + \lambda_{j} \leq 0.
	\end{align*}
	Now, the result follows by taking the minimum over $ \psip+\psiz+1 \leq j \leq \psi  $.
\end{proof}

\section{Newton-MR: Algorithms and Convergence Analysis}
\label{sec:NewtonMR}
Building upon the properties of \cref{alg:MINRES} in \cref{sec:MINRES_main}, we now present two variants of Newton-MR for optimization of \cref{eq:obj}. The first variant, discussed in \cref{sec:NewtonMR_first} and depicted in \cref{alg:NewtonMR_1st}, offers first-order approximate optimality guarantee as in \cref{eq:small_g}. In \cref{sec:NewtonMR_second}, we then propose a slightly more refined variant, which comes equipped with second-order approximate optimality guarantees as in \cref{eq:termination_second_order}. 

\subsection{Blanket Assumptions}
To carry out the analysis of this section, we make the following blanket assumptions regarding smoothness of the function $ f $. 

\begin{assumption}[$\gg$-relevant Hessian Boundedness]
	\label{assmpt:Lg}
	There exists a $ 0 \leq \Lg < \infty $ such that, for any $ \xx \in \real^{d} $, we have $\vnorm{\UU^{\T}\HH \UU} \leq \Lg$, where $ \UU $ is as in \cref{eq:H_decomp_PSD}
\end{assumption}
\cref{assmpt:Lg} implies that $ \max\{\lambda_{1},\abs{\lambda_{\psi}}\} \leq \Lg $, i.e., only the $\gg$-relevant subset of the spectrum of $ \HH $ is required to be bounded. In other words, it implies a directional smoothness with respect to a restricted sub-space. Clearly, \cref{assmpt:Lg} is a relaxation of the usual Lipschitz continuity assumption of $ \gg $. Indeed, the widely-used condition 
\begin{align*}
	\vnorm{\nabla f(\xx) - \nabla f(\yy)} &\leq \Lg \vnorm{\xx - \yy}, \quad \forall \xx,\yy \in \real^{d},
\end{align*}
for twice-continuously differentiable $ f $, implies $ \vnorm{\HH} \leq  \Lg$, which is stronger than \cref{assmpt:Lg}.

\begin{assumption}[Smoothness]
	\label{assmpt:lipschitz}
	The function $ f $ is twice continuously differentiable with Lipschitz continuous Hessian. That is, there exist a constant $0 \leq \LH < \infty $ such that for all $ (\xx,\yy) \in \real^{\dn \times \dn}$ we have 
	\begin{align}
		\vnorm{\nabla^2 f(\xx) - \nabla^2 f(\yy)} &\leq \LH \vnorm{\xx - \yy}. \label{eq:lipschitz_H}
	\end{align}
\end{assumption}

\cref{assmpt:lipschitz} is the only uniform smoothness assumption we make in this paper. Furthermore, unlike the analysis in \cite{royer2018complexity,royer2020newton,yao2021inexact}, we make no assumption on the boundedness of $ \gg $.

Recall the familiar relationship $ \TTt = \VVtT \HH \VVt $ where $ \TTt \in \real^{t \times t}$ is the symmetric tridiagonal matrix  obtained in the $ t\th $ iteration of \cref{alg:MINRES}  and $ \VVt \in \real^{d \times t} $ is the basis obtained from the Lanczos process for $ \Klvt(\HH,\gg) $; see \cref{sec:MINRES_details} for more details.
\cref{lemma:MINRES_NPC_detector} guarantees that as long as a NPC direction has not been encountered, we have $ \TTt \succ \zero $. 
\cref{assmpt:T_regularity} entails assuming that there is a small enough $ \sigma > 0 $ such that, as long as the NPC condition \cref{eq:NPC} has not been detected, $ \TTt $ remains uniformly positive definite. 
\begin{assumption}[Krylov Subspace Regularity Property I]
	\label{assmpt:T_regularity}
	There is a constant $ \sigma > 0 $ small enough such that for any $ \xx \in \real^{d} $, as long as the NPC condition \cref{eq:NPC} has not been detected, we have $\TTt \succeq \sigma \eye$. 
\end{assumption}
Clearly, \cref{assmpt:T_regularity} is equivalent to having $ \dotprod{\ss, \HH \ss} \geq \sigma \vnorm{\ss}^2, \; \forall \ss \in \Klv_{t}(\HH,\gg)$ as long as the NPC condition has not been detected, that is as long as $ \Klv_{t}(\HH,\gg) $ does not contain any NPC direction for $ \HH $. Since by \cref{lemma:MINRES_NPC_detector},  for all $ \xxk $, prior to the detection of the NPC direction we have $ \dotprod{\ss, \HHk \ss} > 0, \, \forall \ss \in \Klv_{t}(\HHk,\ggk)$, it follows that \cref{assmpt:T_regularity} trivially holds if we further assume that the sub-level set $ \mathcal{S}(\xx_{0};f) \defeq \left\{\xx \in \real^{d} \mid f(\xx) \leq f(\xx_{0})\right\} $ is compact, which is often made in similar literature, e.g., \cite{royer2018complexity,royer2020newton,yao2021inexact}.
%
We also note that \cref{assmpt:T_regularity} implies a local convexity property for $ f $ in a uniformly small ball within the Krylov subspace $ \Klvt(\HH,\gg) $ prior to the detection of the NPC condition \cref{eq:NPC}.
\begin{corollary}
	\label{coro:rel_conv}
	For any $ \xx \in \real^{d} $, suppose $ 1 \leq t \leq g $ be such that $ \Klvt(\HH(\xx),\gg(\xx)) $ does not contain any NPC direction for $ \HH(\xx) $. Under \cref{assmpt:T_regularity,assmpt:lipschitz},
	\begin{align*}
		f(\xx+\dd) \geq f(\xx) + \dotprod{\gg(\xx), \dd}, \quad \forall \dd \in \mathcal{D}(\xx),
	\end{align*}
	where $ \mathcal{D}(\xx) $ is a small ball within $ \Klvt(\HH(\xx),\gg(\xx)) $, defined as 
	\begin{align*}
		\mathcal{D}(\xx) \defeq \left\{\dd \in \Klvt(\HH(\xx),\gg(\xx)) \mid \vnorm{\dd} \leq \frac{2 \sigma}{\LH}\right\}.
	\end{align*}
\end{corollary}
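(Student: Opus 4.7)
The plan is to apply a second-order Taylor expansion at $\xx$ and then control the remainder via the Hessian Lipschitz property together with the uniform positive definiteness of $\TTt$ guaranteed by \cref{assmpt:T_regularity}.

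First, I would rewrite the integral form of Taylor's theorem:
\begin{align*}
    f(\xx+\dd) - f(\xx) - \dotprod{\gg(\xx), \dd} = \frac{1}{2}\dotprod{\dd, \HH(\xx)\dd} + \int_0^1 (1-\tau)\dotprod{\dd, [\HH(\xx+\tau\dd) - \HH(\xx)]\dd} \, d\tau.
\end{align*}
The first term on the right is precisely where the Krylov structure enters: since $\dd \in \Klvt(\HH(\xx),\gg(\xx))$ and the NPC condition has not been detected by iteration $t$, the discussion immediately following \cref{assmpt:T_regularity} shows that $\dotprod{\dd,\HH(\xx)\dd} \geq \sigma \vnorm{\dd}^2$. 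Hence this term contributes at least $\tfrac{\sigma}{2}\vnorm{\dd}^2$.

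Next, I would bound the remainder integral using \cref{assmpt:lipschitz}. Cauchy--Schwarz together with $\vnorm{\HH(\xx+\tau\dd) - \HH(\xx)} \leq \LH \tau \vnorm{\dd}$ yields
\begin{align*}
    \left| \int_0^1 (1-\tau)\dotprod{\dd, [\HH(\xx+\tau\dd) - \HH(\xx)]\dd} \, d\tau \right| \leq \LH \vnorm{\dd}^3 \int_0^1 (1-\tau)\tau \, d\tau = \frac{\LH}{6}\vnorm{\dd}^3.
\end{align*}
Combining the two estimates gives
\begin{align*}
    f(\xx+\dd) - f(\xx) - \dotprod{\gg(\xx),\dd} \geq \frac{\sigma}{2}\vnorm{\dd}^2 - \frac{\LH}{6}\vnorm{\dd}^3 = \frac{\vnorm{\dd}^2}{2}\left(\sigma - \frac{\LH}{3}\vnorm{\dd}\right).
\end{align*}

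Finally, for $\dd \in \mathcal{D}(\xx)$ we have $\vnorm{\dd} \leq 2\sigma/\LH$, which (being tighter than the $3\sigma/\LH$ threshold arising from the display above) makes the parenthesized factor non-negative, yielding the claim. There is no real obstacle here; the only subtle point is invoking \cref{assmpt:T_regularity} correctly, namely that $\dd$ lying in $\Klvt(\HH,\gg)$ prior to the detection of NPC translates, via $\TTt \succeq \sigma \eye$ and $\dd = \VVt \ww$ for some $\ww$ with $\vnorm{\ww} = \vnorm{\dd}$, into the lower-bound $\dotprod{\dd,\HH\dd} \geq \sigma\vnorm{\dd}^2$ used above.
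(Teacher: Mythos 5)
Your proof is correct and follows essentially the same route as the paper: Taylor-expand with integral remainder, lower-bound $\dotprod{\dd,\HH(\xx)\dd}$ by $\sigma\vnorm{\dd}^2$ via \cref{assmpt:T_regularity} (using $\dd=\VVt\ww$), and bound the remainder via Hessian Lipschitz continuity. The only cosmetic difference is that you carry the precise $(1-\tau)$ weight in the integral remainder and obtain the coefficient $\LH/6$ (hence a slightly larger valid radius $3\sigma/\LH$), while the paper uses a cruder $\tfrac12\int_0^1$ factor yielding $\LH/4$; both comfortably cover the ball $\vnorm{\dd}\le 2\sigma/\LH$.
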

\begin{proof}
	Recall that \cref{assmpt:T_regularity} is equivalent to having $ \dotprod{\dd, \HHk \dd} \geq \sigma \vnorm{\dd}^{2} $ for any $ \dd \in \mathcal{K}_{t}(\HH(\xx), \gg(\xx)) $. It follows that for any $ \dd \in \mathcal{D}(\xx) $,
	\begin{align*}
		f(\xx + \dd) - f(\xx) &=    \dotprod{\nabla f(\xx), \dd} + \hf    \int_{0}^{1} \dotprod{\dd, \nabla^2 f(\xx + t\dd) \dd} \df t \\
		&= \dotprod{\nabla f(\xx), \dd} + \hf    \int_{0}^{1} \dotprod{\dd, \left(\nabla^2 f(\xx + t\dd) - \nabla^2 f(\xx) + \nabla^2 f(\xx)\right) \dd} \df t \\
		&= \dotprod{\nabla f(\xx), \dd} + \hf    \int_{0}^{1} \dotprod{\dd, \nabla^2 f(\xx) \dd} \df t \\
		& + \hf    \int_{0}^{1} \dotprod{\dd, \left(\nabla^2 f(\xx + t\dd) - \nabla^2 f(\xx)\right) \dd} \df t \\
		&\geq \dotprod{\nabla f(\xx), \dd} + \frac{\sigma }{2} \vnorm{\dd}^2 - \frac{\LH}{4} \vnorm{\dd}^3 \geq \dotprod{\nabla f(\xx), \dd}.
	\end{align*}
\end{proof}

\subsection{Newton-MR With First-order Complexity Guarantee}
\label{sec:NewtonMR_first}
In this section, we present our first variant of Newton-MR, detailed in \cref{alg:NewtonMR_1st}, which offers first-order approximate optimality guarantee as in \cref{eq:small_g}. 
Recall from \cref{eq:sufficient_decrease_dTg} that, as long as $ \ggk \neq \zero $, \cref{alg:MINRES} returns a search direction, $ \ddk $, which is always guaranteed to be a descent direction. As a result, the step-size, $ \alphak $, can be chosen using the Armijo-type line-search by finding the largest $ \alphak $ that satisfies 
\begin{align}
	\label{cond:Armijo}
	f(\xxk + \alphak \ddk) \leq f(\xxk) + \rho \alpha_{k} \dotprod{\ggk, \ddk},
\end{align}
where $ \rho > 0 $ is some appropriately chosen line-search parameter. This is often approximately achieved using a back-tracking line-search strategy as in \cref{alg:line_search}. 

\begin{algorithm}[!htbp]
	\caption{Backward Tracking Line-Search}
	\label{alg:line_search}
	\begin{algorithmic}[1]
		\vspace{1mm}
		\STATE \textbf{Input:} $ \alpha = 1 $, $ 0 < \zeta < 1 $
		\vspace{1mm}
		\WHILE {Line-search criterion is not satisfied with $ \alpha $}
		\vspace{1mm}
		\STATE $ \alpha = \zeta \alpha $
		\vspace{1mm}
		\ENDWHILE
		\vspace{1mm}
		\RETURN $ \alpha $
	\end{algorithmic}
\end{algorithm}

In \cref{alg:NewtonMR_1st}, when $ \dt =$ ``SOL'', we set $ \alpha = 1 $ as the initial trial step-size for the backtracking line-search procedure. However, when $ \dt =$ ``NPC'', the proof of  \cref{lemma:NC_distance_opt} below highlights the fact that the chosen step-size may in fact be much larger. In this light, when $ \ddk $ is a direction of non-positive curvature, instead of back-tracking line-search, we can employ a forward-tracking procedure, depicted in \cref{alg:line_search_forward}, to search for larger step-sizes that satisfy the line-search criterion \cref{cond:Armijo}. We note that the notion of forward-tracking procedure has been considered in the literature before in various contexts, e.g., \cite{gould2000exploiting}.

\begin{algorithm}[!htbp]
	\caption{Forward/Backward Tracking Line-Search}
	\label{alg:line_search_forward}
	\begin{algorithmic}[1]
		\vspace{1mm}
		\STATE \textbf{Input:} $ \alpha > 0 $, $ 0 < \zeta < 1 $
		\vspace{1mm}
		\IF {Line-search criterion is not satisfied with $ \alpha $} 
		\vspace{1mm}
		\STATE \text{Call \cref{alg:line_search}}
		\vspace{1mm}
		\ELSE
		\vspace{1mm}
		\WHILE {Line-search criterion is satisfied with $ \alpha $}
		\vspace{1mm}
		\STATE $ \alpha = \alpha/\zeta $
		\vspace{1mm}
		\ENDWHILE
		\vspace{1mm}
		\RETURN $ \zeta \alpha $
		\vspace{1mm}
		\ENDIF
	\end{algorithmic}
\end{algorithm}

\begin{remark}
	The initial trial step-size for \cref{alg:line_search} is set as $ \alpha = 1$. The motivation for this choice is based on the following observation. Suppose MINRES returns a solution direction $ \ddk $ prior to detection of the NPC condition. Consider the second-order Taylor expansion of $ f $ at $ \xxk $ as
	\begin{align*}
		\phi(\alpha) \defeq f(\xxk) + \alpha \dotprod{\ddk, \ggk} + \frac{\alpha^2}{2} \dotprod{\ddk, \HHk \ddk},
	\end{align*}
	which is often taken as a local approximation to $ f $ in many Newton-type algorithms.
	By the \cref{lemma:sTr}, which is a unique property of MINRES, we can upper bound the mismatch between $ \phi(\alpha) $ and $ f(\xxk) $ as
	\begin{align*}
		\phi(\alpha) - f(\xxk) \leq  \left( \frac{\alpha^2}{2} - \alpha \right) \dotprod{\ddk, \HHk \ddk}.
	\end{align*}
	Clearly, $ \alpha = 1 $ is the minimizer of this upper bound, which implies that $ \alpha = 1 $ may, in some sense, be an optimal choice for the initial step-size in Newton-MR. This choice is also often made for the Newton-CG and many quasi-Newton methods. However, the motivation for such a choice for those algorithms is entirely different than the discussion above and is based on achieving quadratic/super-linear local convergence rate. This also raises an interesting research direction to quantify the local convergence of Newton-MR with unit step-size.
\end{remark}

\begin{algorithm}[!htbp]
	\caption{Newton-MR With First-order Complexity Guarantee}
	\label{alg:NewtonMR_1st}
	\begin{algorithmic}[1]
		\vspace{1mm}
		\STATE \textbf{Input:} 
		\vspace{1mm}
		\begin{itemize}[leftmargin=*,wide=0em, noitemsep,nolistsep,label = {\bfseries -}]
			\item Initial point:    $ \xx_{0} $
			\item Approximate first-order optimality tolerance: $ 0 < \epsg \leq 1 $
			\item Inexactness tolerance: $ \theta > 0 $
		\end{itemize}
		\vspace{1mm}
		\STATE $ k = 0 $
		\vspace{1mm}
		\WHILE{$ \| \ggk \| > \epsg $}
		\vspace{1mm}
		\STATE Call \cref{alg:MINRES} as $\left[\ddk,\dt\right] = \text{MINRES}(\HHk, \ggk, \theta \sqrt{\epsg})$
		\vspace{1mm}
		\IF{ $ \dt = \text{`SOL'} $}
		\vspace{1mm}
		\STATE Find the largest $ 0 < \alphak \leq 1$ satisfying \cref{cond:Armijo} with $ 0 < \rho < 1/2 $ using \cref{alg:line_search}
		\ELSE
		\vspace{1mm}
		\STATE Find the largest $ \alphak > 0$ satisfying \cref{cond:Armijo} with $ 0 < \rho < 1 $ using \cref{alg:line_search_forward}
		\vspace{1mm}
		\ENDIF
		\vspace{1mm}
		\STATE $ \xx_{k+1} = \xxk + \alphak \ddk $
		\vspace{1mm}
		\STATE $ k = k + 1 $
		\vspace{1mm}
		\ENDWHILE
		\vspace{1mm}
		\STATE \textbf{Output:} $ \xxk $ satisfying first-order optimality \cref{eq:small_g}
	\end{algorithmic}
\end{algorithm}

\subsubsection{Optimal Iteration Complexity}
\label{sec:opt}
In this section, we provide the first-order iteration complexity analysis of \cref{alg:NewtonMR_1st} and show that it achieves the optimal rate of $ \bigO{\epsg^{-3/2}} $. 

Typically, one might expect the size of the update direction to be directly correlated with the magnitude of the gradient. Indeed, from \cite[Lemma 3.11]{liu2021convergence}, we have $ \| \HHk \sskt \| \leq \| \ggk \| $. Hence, by \cref{assmpt:T_regularity}, 
\begin{align*}
	\vnorm{\sskt} \leq \frac{\dotprod{\sskt, \HHk \sskt}}{\sigma \vnorm{\sskt}} \leq \frac{\vnorm{\HHk \sskt}}{\sigma} \leq \frac{\vnorm{\ggk}}{\sigma},
\end{align*}
as long as no NPC direction has been detected. An important ingredient of our analysis is to establish a converse, which will then allow us to obtain a bound on a worst-case decrease in the objective value.
Suppose MINRES returns an iterate $ \ddk = \ssktp $ that satisfies the inexact condition \cref{eq:inexactness}. \cref{lemma:s_norm_opt} shows that magnitude of the direction serves as an estimate on the gradient norm at the point $ \xxkn = \xxk + \ddk $.
\begin{lemma}
	\label{lemma:s_norm_opt}
	Under \cref{assmpt:Lg,assmpt:lipschitz,assmpt:T_regularity}, if \cref{eq:inexactness} is satisfied with $\eta = \theta \sqrt{\epsg} $ for some $ \theta > 0 $, we have
	\begin{align*}
		\vnorm{\ssktp} \geq c_0  \min\left\{\frac{\vnorm{\nabla f(\xxk + \ssktp)}}{\sqrt{\epsg}}, \sqrt{\epsg}\right\},
	\end{align*}
	where
	\begin{align*}
		c_0 \defeq \left( \frac{2 \sigma}{\theta^{2} \Lg^{2} + \sqrt{\theta \Lg + 2 \sigma^{2} L_{\HH}}}\right),
	\end{align*}
	and $ \Lg$, $\LH $, and $ \sigma $ are as in \cref{assmpt:Lg,assmpt:lipschitz,assmpt:T_regularity}.
\end{lemma}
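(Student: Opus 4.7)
The plan is to convert the inexactness certificate of MINRES into a quadratic inequality that bounds $\vnorm{\nabla f(\xxk+\ssktp)}$ from above by a polynomial in $\vnorm{\ssktp}$, and then invert that inequality. First, I would apply Taylor's theorem with integral remainder, and use the Lipschitz Hessian bound \cref{eq:lipschitz_H} to obtain
\begin{align*}
  \vnorm{\nabla f(\xxk+\ssktp) - \ggk - \HHk\ssktp} \;\leq\; \tfrac{\LH}{2}\vnorm{\ssktp}^{2}.
\end{align*}
Since $\rrktp = -\ggk - \HHk\ssktp$ by definition, this immediately gives
$\vnorm{\nabla f(\xxk+\ssktp)} \leq \vnorm{\rrktp} + (\LH/2)\vnorm{\ssktp}^{2}$.

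Next I need to control $\vnorm{\rrktp}$ by $\vnorm{\ssktp}$. Because $\ssktp$ is returned via the inexactness branch, the NPC condition has not yet triggered, so \cref{assmpt:T_regularity} applies. Since $\rrktp \in \Klvt(\HHk,\ggk)$, we may write $\rrktp = \VVt\yy$ for some $\yy \in \real^{t}$ and deduce
\begin{align*}
  \sigma \vnorm{\rrktp}^{2} \;\leq\; \dotprod{\yy, \TTt \yy} \;=\; \dotprod{\rrktp, \HHk \rrktp} \;\leq\; \vnorm{\rrktp}\,\vnorm{\HHk\rrktp}.
\end{align*}
Cancelling $\vnorm{\rrktp}$, using the inexactness condition \cref{eq:inexactness} with $\eta = \theta\sqrt{\epsg}$, and then using \cref{lemma:HUUS} together with \cref{assmpt:Lg} to get $\vnorm{\HHk \ssktp} = \vnorm{\HHk \UU\UUT \ssktp} = \vnorm{\UU\UUT \HHk \UU\UUT \ssktp} \leq \vnorm{\UUT \HHk \UU}\vnorm{\ssktp} \leq L_{\gg}\vnorm{\ssktp}$, yields
\begin{align*}
  \vnorm{\rrktp} \;\leq\; \frac{\vnorm{\HHk\rrktp}}{\sigma} \;\leq\; \frac{\theta \sqrt{\epsg}}{\sigma}\vnorm{\HHk \ssktp} \;\leq\; \frac{\theta \sqrt{\epsg}\,L_{\gg}}{\sigma}\vnorm{\ssktp}.
\end{align*}

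Combining these two bounds, and writing $s = \vnorm{\ssktp}$, $G = \vnorm{\nabla f(\xxk+\ssktp)}$, $a = \theta\sqrt{\epsg}L_{\gg}/\sigma$, and $b = \LH/2$, we obtain the scalar quadratic inequality $G \leq a s + b s^{2}$. Treating this as a quadratic in $s$ and applying the usual lower-bound formula gives
\begin{align*}
  s \;\geq\; \frac{2G}{a + \sqrt{a^{2} + 4 b G}}.
\end{align*}
The final step is a case split to match the claimed form $c_{0}\min\{G/\sqrt{\epsg}, \sqrt{\epsg}\}$: when $G \leq \epsg$, I bound the $4bG$ term inside the square root by $4b\epsg = 2\LH\epsg$ to absorb the denominator into a constant times $\sqrt{\epsg}$, thereby giving $s \gtrsim G/\sqrt{\epsg}$; when $G \geq \epsg$, monotonicity of the function $r \mapsto 2\sigma r/(\theta L_{\gg} + \sqrt{\theta^{2}L_{\gg}^{2} + 2\sigma^{2}\LH r})$ in $r = G/\epsg \geq 1$ shows the worst case is $r = 1$, which gives $s \gtrsim \sqrt{\epsg}$ with the same constant.

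\textbf{Main obstacle.} The only nontrivial piece is the passage from inexactness to $\vnorm{\rrktp} \lesssim \sqrt{\epsg}\vnorm{\ssktp}$; this hinges on simultaneously exploiting (i) the Krylov regularity assumption to invert $\HHk$ on $\rrktp$ up to the factor $1/\sigma$, and (ii) \cref{lemma:HUUS} together with the $\gg$-relevant boundedness in \cref{assmpt:Lg} (rather than a full operator-norm bound on $\HHk$) to upper bound $\vnorm{\HHk\ssktp}$ by $L_{\gg}\vnorm{\ssktp}$. Once this linear-in-$s$ bound is established, the rest is elementary algebra.
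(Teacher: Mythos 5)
Your proposal is correct and follows the paper's proof essentially step for step: the Lipschitz remainder bound, the use of \cref{assmpt:T_regularity} to get $\vnorm{\rrktp}\leq \vnorm{\HHk\rrktp}/\sigma$, the inexactness bound with \cref{assmpt:Lg} (you make the reliance on \cref{lemma:HUUS} explicit, which the paper leaves implicit), and the inversion of the resulting scalar quadratic with the monotonicity/case argument. One remark: the constant you derive, $c_0 = 2\sigma/\bigl(\theta \Lg + \sqrt{\theta^2\Lg^2 + 2\sigma^2\LH}\bigr)$, agrees with the paper's own proof, while the $c_0$ displayed in the lemma statement appears to contain a typographical slip (the exponents inside the denominator are shuffled).
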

\begin{proof}   
	Firs, we note that since $ \rrktp \in \Klv_{t}(\HHk,\ggk) $ and the NPC condition \cref{eq:NPC} has not yet been detected at iteration $ t $, \cref{assmpt:T_regularity} implies that
	\begin{align*}
		\vnorm{\rrktp}^{2} \leq \dotprod{\rrktp,\HHk \rrktp}/\sigma \leq \vnorm{\rrktp} \vnorm{\HHk\rrktp}/\sigma,
	\end{align*}
	which gives $ \vnorm{\rrktp} \leq \vnorm{\HHk\rrktp}/\sigma $.
	Denote $ \gg_{k+1} \defeq \nabla f(\xxk + \ssktp) $. From \cref{assmpt:Lg,assmpt:lipschitz,eq:inexactness}, we have
	\begin{align*}
		\vnorm{\gg_{k+1}} &= \vnorm{\gg_{k+1} - \ggk - \HHk \ssktp - \rrktp} \leq \vnorm{\gg_{k+1} - \ggk - \HHk \ssktp} + \vnorm{\rrktp} \\
		&\leq \frac{\LH}{2} \vnorm{\ssktp}^{2} + \frac{\vnorm{\HHk \rrktp}}{\sigma} \leq \frac{\LH}{2} \vnorm{\ssktp}^{2} + \frac{\eta \vnorm{\HHk \ssktp}}{\sigma} \\
		&\leq \frac{\LH}{2} \vnorm{\ssktp}^{2} + \frac{\eta \Lg \vnorm{\ssktp}}{\sigma} = \frac{\LH}{2} \vnorm{\ssktp}^{2} + \frac{\theta \sqrt{\epsg}\Lg \vnorm{\ssktp}}{\sigma}
	\end{align*}
	Now, by solving
	\begin{align*}
		\sigma L_{\HH} \vnorm{\ssktp}^2 + 2 \theta \sqrt{\epsg} \Lg \vnorm{\ssktp} - 2\sigma \vnorm{\gg_{k+1}}\geq 0,
	\end{align*}
	we obtain
	\begin{align*}
		\vnorm{\ssktp} &\geq \frac{- 2 \theta \sqrt{\epsg} \Lg + \sqrt{4 \theta^{2} \epsg \Lg^{2} + 8 \sigma^{2} L_{\HH} \vnorm{\gg_{k+1}}}}{2 \sigma L_{\HH}} \\
		&= \left( \frac{- \theta \Lg + \sqrt{\theta^{2} \Lg^{2} + 2\sigma^{2} L_{\HH} \vnorm{\gg_{k+1}}/\epsg}}{\sigma L_{\HH}}\right) \sqrt{\epsg} \\
		&= \left( \frac{2 \sigma \vnorm{\ggkk}/\epsg}{\theta \Lg + \sqrt{\theta^{2} \Lg^{2} + 2 \sigma^{2} L_{\HH} \vnorm{\gg_{k+1}}/\epsg}}\right) \sqrt{\epsg} \\
		&\geq \left( \frac{2 \sigma}{\theta \Lg + \sqrt{\theta^{2} \Lg^{2} + 2 \sigma^{2} L_{\HH}}}\right) \sqrt{\epsg} \min\left\{\vnorm{\gg_{k+1}}/\epsg, 1\right\}.
	\end{align*}    
\end{proof}

The next lemma gives the worst-case amount of descent obtained by using a solution direction satisfying \cref{eq:inexactness}.
\begin{lemma}
	\label{lemma:SOL_distance_opt}
	Suppose \cref{alg:MINRES} returns $ \dt = \text{`SOL'} $. Under \cref{assmpt:Lg,assmpt:lipschitz,assmpt:T_regularity}, in \cref{alg:NewtonMR_1st}, we have
	\begin{align*}
		f(\xxkn) \leq f(\xxk) - \min \left\{c_{1}, c_{2} \epsg, c_{2} {\vnorm{\ggkk}^{2}}/{\epsg} \right\},
	\end{align*}
	where
	\begin{align*}
		c_1 &\defeq \rho \sigma \left(\frac{3 \sigma \left( 1 - 2 \rho\right)}{\LH}\right)^{2}, \quad c_2 \defeq \rho \sigma c_{0}^{2},
	\end{align*}
	$ c_0 $, $ \LH $, and $ \sigma $ are defined, respectively, in \cref{lemma:s_norm_opt,assmpt:T_regularity,assmpt:lipschitz}, and $ 0 < \rho < 1/2 $ is the line-search parameter. 
\end{lemma}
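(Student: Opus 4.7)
The plan is to combine the Armijo sufficient decrease condition \cref{cond:Armijo} with MINRES's strengthened descent property prior to NPC detection, and then invoke \cref{lemma:s_norm_opt} to convert the descent into the form stated. First, since \cref{alg:MINRES} returns $\dt = \text{`SOL'}$, no NPC direction has been encountered and the returned $\ddk = \ssktp \in \Klvt(\HHk,\ggk)$ satisfies $\dotprod{\ggk,\ddk} \leq -\dotprod{\ddk,\HHk\ddk}$ via \cref{lemma:sTr}. \cref{assmpt:T_regularity} then upgrades this to $\dotprod{\ggk,\ddk} \leq -\sigma\vnorm{\ddk}^2$, so with the accepted step-size $\alphak$ the Armijo inequality immediately yields
\[
f(\xxk) - f(\xxkn) \geq -\rho\alphak\dotprod{\ggk,\ddk} \geq \rho\sigma\alphak\vnorm{\ddk}^2.
\]

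The next step is to lower-bound $\alphak$ via case analysis. If $\alphak = 1$ (no backtracking), nothing more is needed. Otherwise, the previous trial $\balpha = \alphak/\zeta \leq 1$ has violated \cref{cond:Armijo}; the Taylor expansion under \cref{assmpt:lipschitz},
\[
f(\xxk + \balpha\ddk) \leq f(\xxk) + \balpha\dotprod{\ggk,\ddk} + \tfrac{\balpha^2}{2}\dotprod{\ddk,\HHk\ddk} + \tfrac{\LH\balpha^3}{6}\vnorm{\ddk}^3,
\]
combined with the violated Armijo inequality and the bound $\dotprod{\ggk,\ddk} \leq -\dotprod{\ddk,\HHk\ddk} \leq -\sigma\vnorm{\ddk}^2$, rearranges to
\[
(1 - \rho - \balpha/2)\sigma\vnorm{\ddk}^2 \leq \tfrac{\LH\balpha^2}{6}\vnorm{\ddk}^3.
\]
Since $\balpha \leq 1$ and $\rho < 1/2$, the left factor is at least $(1/2 - \rho)\sigma\vnorm{\ddk}^2 > 0$, so the same inequality simultaneously yields $\balpha^2 \geq 3(1-2\rho)\sigma/(\LH\vnorm{\ddk})$ and, using $\balpha \leq 1$ once more on the right-hand side, $\vnorm{\ddk} \geq 3(1-2\rho)\sigma/\LH$.

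Finally, I would combine these bounds. In the $\alphak=1$ branch, the descent is $\rho\sigma\vnorm{\ddk}^2$, which via \cref{lemma:s_norm_opt}, i.e., $\vnorm{\ddk}^2 \geq c_0^2\min\{\epsg,\vnorm{\ggkk}^2/\epsg\}$, produces the $c_2\epsg$ and $c_2\vnorm{\ggkk}^2/\epsg$ terms of the minimum. In the backtracking branch, substituting the two complementary lower bounds on $\alphak = \zeta\balpha$ and on $\vnorm{\ddk}$ into $\rho\sigma\alphak\vnorm{\ddk}^2$ eliminates the $\vnorm{\ddk}$ dependence and yields the constant $c_1 = \rho\sigma(3\sigma(1-2\rho)/\LH)^2$ (up to the $\zeta$ factor from backtracking, which can be absorbed into the constants). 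Taking the minimum over the two branches delivers the lemma. The main technical delicacy will be the double use of the displayed inequality $(1 - \rho - \balpha/2)\sigma\vnorm{\ddk}^2 \leq \LH\balpha^2\vnorm{\ddk}^3/6$ to extract simultaneously a lower bound on $\alphak$ and on $\vnorm{\ddk}$, so that the backtracking-branch descent becomes independent of the direction magnitude; the remaining steps are routine algebraic substitutions.
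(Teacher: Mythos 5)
Your proposal is correct and follows essentially the same argument as the paper's proof: Taylor expansion under \cref{assmpt:lipschitz}, the MINRES-specific inequality $\dotprod{\ggk,\ddk}\leq -\dotprod{\ddk,\HHk\ddk}$ from \cref{lemma:sTr}, the uniform Krylov positivity $\dotprod{\ddk,\HHk\ddk}\geq\sigma\vnorm{\ddk}^2$ from \cref{assmpt:T_regularity}, and \cref{lemma:s_norm_opt} in the full-step branch. The only cosmetic differences are the framing of the case split (you split on whether $\alphak=1$ versus backtracking having occurred, the paper splits on whether $\vnorm{\ddk}$ exceeds $3\sigma(1-2\rho)/\LH$ — the two are equivalent) and that you track the backtracking granularity factor $\zeta$ explicitly, which the paper silently subsumes by treating $\alphak$ as the exact largest Armijo-acceptable step; as you note, this $\zeta$ can be absorbed into $c_1$ without affecting the order of the bound.
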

\begin{proof}
	Since $ \dt = \text{`SOL'} $, we have $ \ddk = \ssktp $. 
	From \cref{assmpt:lipschitz}, for any $ 0 < \alpha \leq 1 $, 
	\begin{align*}
		f(\xxk + \alpha \ddk) - f(\xxk) -\alpha \rho \dotprod{\ddk,\ggk} &\leq \alpha \dotprod{\ddk, \ggk} + \frac{\alpha^2}{2} \dotprod{\ddk, \HH \ddk}  + \frac{\LH}{6} \alpha^3 \vnorm{\ddk}^3 - \alpha \rho \dotprod{\ddk,\ggk}\\
		&\leq \alpha (1-\rho) \dotprod{\ddk, \ggk} + \frac{\alpha}{2} \dotprod{\ddk, \HH \ddk}  + \frac{\LH}{6} \alpha^3 \vnorm{\ddk}^3 \\
		&\leq \frac{\alpha(1-2\rho)}{2} \dotprod{\ddk, \ggk} + \frac{\alpha}{2} \left( \dotprod{\ddk, \ggk} + \dotprod{\ddk, \HH \ddk} \right) \\
		& + \frac{\LH}{6} \alpha^3 \vnorm{\ddk}^3 \\
		&\leq \frac{\alpha(1-2\rho)}{2} \dotprod{\ddk, \ggk} + \frac{\LH}{6} \alpha^3 \vnorm{\ddk}^3 \\
		&\leq -\frac{\alpha(1-2\rho)}{2} \dotprod{\ddk, \HHk\ddk} + \frac{\LH}{6} \alpha^3 \vnorm{\ddk}^3\\
		&\leq -\frac{\alpha \sigma (1-2\rho)}{2} \vnorm{\ddk}^{2} + \frac{\LH}{6} \alpha^3 \vnorm{\ddk}^3,
	\end{align*}
	where the last two inequalities follows from \cref{lemma:sTr,assmpt:T_regularity}, respectively.
	To satisfy the line-search criterion \cref{cond:Armijo}, we require that the right hand side is negative, which implies that for the largest $ \alphak $ satisfying the line-search condition \cref{cond:Armijo}, we must have 
	\begin{align*}
		\alphak \geq \min\left\{1,\sqrt{\frac{3 \sigma \left( 1 - 2 \rho\right)}{\LH \vnorm{\ddk}}}\right\}.
	\end{align*}    
	If 
	\begin{align*}
		\vnorm{\ddk} \geq \frac{3 \sigma \left( 1 - 2 \rho\right)}{\LH},
	\end{align*}
	it follows that
	\begin{align*}
		f(\xxk + \alphak \ddk) &\leq f(\xxk) + \alphak \rho \dotprod{\ddk,\ggk} \leq f(\xxk) - \rho \sqrt{\frac{3 \sigma \left( 1 - 2 \rho\right)}{\LH \vnorm{\ddk}}} \dotprod{\ddk,\HHk\ddk} \\
		&\leq f(\xxk) - \rho \sigma  \sqrt{\frac{3 \sigma \left( 1 - 2 \rho\right)}{\LH}} \vnorm{\ddk}^{3/2} \leq f(\xxk) - \rho \sigma \left(\frac{3 \sigma \left( 1 - 2 \rho\right)}{\LH}\right)^{2}.
	\end{align*} 
	Otherwise, we have $ \alphak = 1 $, and 
	\begin{align*}
		f(\xxk + \ddk) &\leq f(\xxk) - \rho \sigma  \vnorm{\ddk}^{2} \leq f(\xxk) - \rho \sigma c_{0}^{2} \min\left\{\frac{\vnorm{\nabla f(\xxk + \ddk)}^{2}}{\epsg}, \epsg\right\},
	\end{align*}
	where the last inequality follows from \cref{lemma:s_norm_opt}.
\end{proof}

Suppose the NPC condition \cref{eq:NPC} is detected at iteration $ t $, before the inexactness condition \cref{eq:inexactness} with some $ \eta = \theta \sqrt{\epsg} > 0 $ is satisfied. By $ \| \HH \rrtp \| > \eta \| \HH \sstp \| $ and by \cref{assmpt:Lg,eq:Hs}, we must have $ \|\rrktp\| > \eta/\sqrt{\Lg^2 + \eta^2} \vnorm{\ggk} $. Our choice of the inexactness tolerance $ \eta = \theta \sqrt{\epsg} $ could raise suspicion that we must have $ \|\rrktp\| \in \Omega(\sqrt{\epsg}) $. However, we now argue that a lower bound for NPC direction must be independent of $ \epsg $. For the NPC direction, $ \rrktp $, we clearly have $ \|\rrktp\| > 0 $. In fact, $ \|\rrktp\| > \|(\eye - \HHk\HHdk)\ggk \| $, and hence as long as $ \ggk \notin \range(\HHk)$, we obtain a lower-bound on $ \|\rrktp\|/\vnorm{\ggk} $, which is independent of the inexactness tolerance as well as the magnitude of $ \ggk $. More generally, from the construction of \cref{alg:MINRES}, we can clearly see that all of the underlying quantities are built from $ \HHk $ and $ \vv_1 = \ggk/\vnorm{\ggk} $. That is, for all $ t \geq 1 $, the relevant factors such as $ \talpha_{t} $, $ \tbeta_{t+1} $, and hence $ \gamma_{t} $, $ s_t $, and $ c_{t} $, detailed in \cref{sec:MINRES_review}, are all independent of inexactness tolerance $ \eta $ and the magnitude of $ \ggk $. As a result, the relative residual $ \| \rrktp \| / \| \ggk \| = \phi_{t}/\phi_{0} = s_1 s_2 \ldots s_{t-1} $ and the NPC condition \cref{eq:NPC} (or equivalently \cref{eq:NPC_cond}) all enjoy the same independence. In addition, the maximum number of iterations to detect an NPC direction, i.e., $ \TN $ in \cref{eq:complexity_NC} below, is also independent of these two factors.
Let us further investigate these observations with some examples. 
\begin{example}
	\label{ex:residual_NPC}
	Consider the following example 
	\begin{align*}
		\HH = \begin{bmatrix}
			\Lg & \\
			& -\mu
		\end{bmatrix}, \quad \gg = -\begin{bmatrix}
			1 \\
			\beps
		\end{bmatrix}, \quad \uu_1 = \begin{bmatrix}
			1 \\
			0
		\end{bmatrix}, \quad \uu_2 = \begin{bmatrix}
			0 \\
			1
		\end{bmatrix}.
	\end{align*}
	Clearly, as $ \beps \rightarrow 0 $, $ \gg $ becomes increasingly orthogonal to $ \uu_2 $. On the other hand, when $ \beps \rightarrow \pm \infty $, $ \gg $ become more aligned with $ \uu_{2} $. Following \cref{alg:MINRES}, we have
	\begin{align*}
		\rr_{0} = - \gg = \begin{bmatrix}
			1 \\
			\beps
		\end{bmatrix}, \quad \vv_1 = \frac{1}{\sqrt{1+\beps^2}}\begin{bmatrix}
			1 \\
			\beps
		\end{bmatrix}.
	\end{align*}
	For $ t = 1 $, we have 
	\begin{align*}
		\talpha_{1} = \vv_1^\T \HH \vv_1 = \frac{\Lg - \mu \beps^2}{1+\beps^2} = \gamma_1, \quad \tbeta_2 = \frac{\beps(\Lg + \mu)}{1+\beps^2}, \quad \vv_2 = \frac{1}{\sqrt{1+\beps^2}}\begin{bmatrix}
			\beps \\
			-1
		\end{bmatrix}
	\end{align*}
	and
	\begin{align*}
		\gamma_{1}^{[2]} = \sqrt{\frac{\Lg^2 + \beps^2 \mu^2}{1+\beps^2}}, \quad c_1 = \frac{\Lg - \mu \beps^2}{\sqrt{(1+\beps^2)(\Lg^2 + \beps^2 \mu^2)}}, \quad s_1 = \frac{\beps(\Lg + \mu)}{\sqrt{(1+\beps^2)(\Lg^2 + \beps^2 \mu^2)}}
	\end{align*}
	Now, considering $ t = 2 $, we get
	\begin{align*}
		\delta_2 = \tbeta_2, \quad \talpha_2 = \vv_2^\T \HH \vv_2 = \frac{\Lg \beps^2 - \mu}{1+\beps^2}, \quad \gamma_2 = s_1 \delta_2 - c_1 \talpha_2 = \Lg \mu \sqrt{\frac{1+\beps^2}{\Lg^2 + \beps^2 \mu^2}},
	\end{align*}
	and $ \tbeta_3 = 0 $, which implies the algorithm terminates after two iterations. Consider the NPC condition \cref{eq:NPC} (or equivalently \cref{eq:NPC_cond}) for $ t=1 $ and $ t=2 $ as
	\begin{align*}
		-c_0 \gamma_1 = \frac{\Lg - \mu \beps^2}{1+\beps^2}, \quad -c_1 \gamma_2 = -\left(\Lg - \mu \beps^2\right) \frac{\Lg \mu}{\Lg^2 + \beps^2 \mu^2}.
	\end{align*}
	For $ \rr_0 $ to be a NPC direction, we need to have $ \beps^2 \geq \Lg / \mu $. In this case, $ \rr_{0} = \gg $ and we have the relative residual $ \| \rr_0 \| / \| \gg \| = 1 $. Otherwise, if $ \beps^2 \leq \Lg / \mu $, then $ \rr_{1} $ is a NPC direction. In this case, the angle between the direction of $ \gg $ and $ \uu_2 $ can be represented as
	\begin{align*}
		\angle_2 = \abs{\dotprod{\frac{\gg}{\vnorm{\gg}}, \uu_2}} = \abs{\frac{\beps}{\sqrt{1+\beps^2}}}.
	\end{align*}
	Hence, we have $ \beps^2 \geq {\angle_2^2}/{(1-\angle_2^2)} $, and 
	\begin{align*}
		\frac{\vnorm{\rr_1}}{\vnorm{\gg}} = s_1 = \frac{\Lg + \mu}{\sqrt{(1/\beps^2+1)(\Lg^2/\beps^2 + \mu^2)}} \geq \frac{\angle_2(\Lg + \mu)}{\sqrt{\Lg^2(1-\angle_2^2)/\angle_2^2 + \mu^2}},
	\end{align*}
	which is independent of either $ \eta $ or the magnitude of $ \gg $.
\end{example}

\begin{example}
	\label{exp:MR}
	Consider a random matrix $ \HH $ with $ \dn = 20 $, containing $ 18 $ positive eigenvalues, $ 1 $ negative eigenvalue, and a one dimensional null-space. We consider the following vectors. Let $ \gg_1 = \mathbf{1} $ be the vector of all ones, $ \gg_2 $ be a random vector drawn from the multivariate normal distribution $\mathcal{N}(0,1) $. We scale $ \gg_{2} $ appropriately such that $ \| \gg_2 \| = \| \gg_1 \| $. Also, let $ \gg_3 = 10 \times \gg_1 $ and $ \gg_4 = 10 \times \gg_2 $. 
	
	According to our observations above, the smallest eigenvalue of $ \TTt $ using either $ \gg_1 $ or $ \gg_3 $ must be identical. Similarly, $ \gg_2 $ and $ \gg_4 $ generate identical Krylov sub-spaces, and hence must detect a NPC direction in an identical manner. \cref{fig:mr} depicts the results of \cref{alg:MINRES} using these four vectors. 
	While the residual itself depends on the choice of $ \gg_1 $ or $ \gg_3 $, the relative residual and $ \lambda_{\min}(\TTt) $ behave identically, resulting in the detection of the NPC condition \cref{eq:NPC} at the same iterations for both. The same is also seen to be true for $ \gg_2 $ and $ \gg_4 $. These examples verify our observation that $ \vnorm{\rrtp}/\vnorm{\gg} $ is independent of the magnitude of $ \gg$.  
	Note that a zero curvature direction is detected at the last iteration for all four examples, which is expected as per \cref{lemma:T_PSD}-\labelcref{lemma:T_PSD_ii}. 
\end{example}
\begin{figure}[!htbp]
	\centering
	\includegraphics[width=1\textwidth]{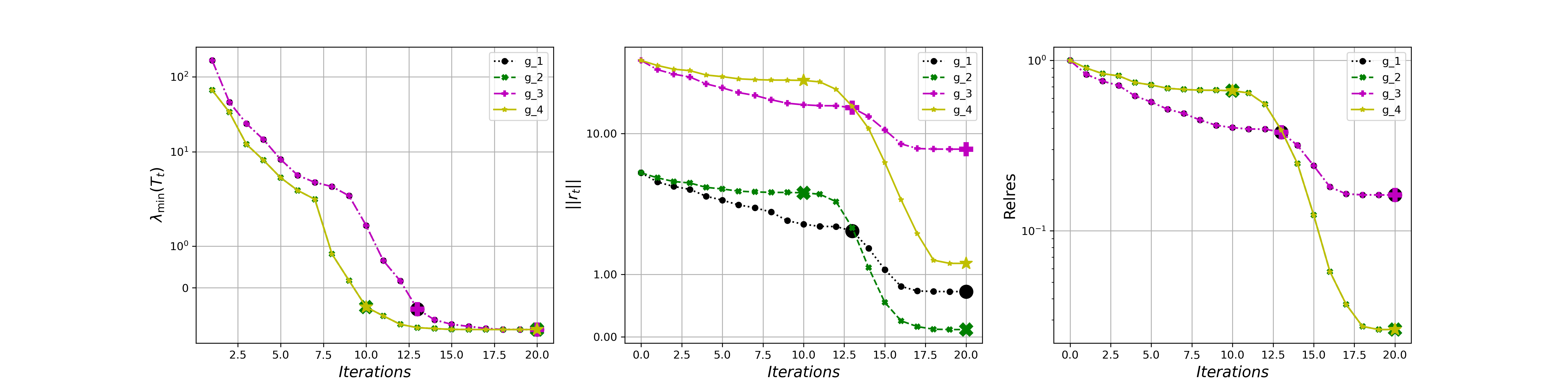}
	\caption{The behavior of \cref{alg:MINRES} in \cref{exp:MR}. Iterations where a NPC direction is detected are highlighted by enlarged marks. \label{fig:mr}}
\end{figure}

These observations lead us to safely conclude that there must exist $ \omega > 0 $, not depending on $ \eta $ or $ \ggk $, for which we have $ \vnorm{\rrktp} \geq \omega \vnorm{\ggk} $. \cref{assmpt:residual_NPC} entails the existence of a uniform $ \omega $ for all iterates of \cref{alg:NewtonMR_1st}.
\begin{assumption}[Krylov Subspace Regularity Property II]
	\label{assmpt:residual_NPC}
	There exist some $ \omega > 0 $ such that for any $ \xxk $ from \cref{alg:NewtonMR_1st}, the NPC direction satisfies $ \vnorm{\rrktp} \geq \omega \vnorm{\ggk} $.
\end{assumption}

Similar to \cref{lemma:SOL_distance_opt}, \cref{lemma:NC_distance_opt} gives a worst-case estimate on the reduction in $ f $ obtained with a NPC direction.
\begin{lemma}
	\label{lemma:NC_distance_opt}   
	Suppose \cref{alg:MINRES} returns $ \dt = \text{`NPC'} $. 
	Under \cref{assmpt:Lg,assmpt:lipschitz,assmpt:residual_NPC}, in \cref{alg:NewtonMR_1st}, we have
	\begin{align*}
		f(\xxkn) - f(\xxk) \leq -c_3 \vnorm{\ggk}^{3/2},
	\end{align*}
	where
	\begin{align*}
		c_3 \triangleq \rho \omega^{3/2} \sqrt{\frac{6  \left( 1-\rho\right)}{\LH}},
	\end{align*}
	$ \LH $, $ \omega $ are defined in \cref{assmpt:lipschitz,assmpt:residual_NPC}, respectively, and $ 0 < \rho < 1 $ is the line-search parameter. 
\end{lemma}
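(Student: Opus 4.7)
The plan is to mirror the argument used for \cref{lemma:SOL_distance_opt}, but now exploit the specific structure of a non-positive curvature direction rather than the inexactness bound. When \cref{alg:MINRES} returns $\dt = \text{`NPC'}$, the search direction is $\ddk = \rrktp$ and enjoys two key properties: (i) $\dotprod{\ddk, \HHk \ddk} \leq 0$ by \cref{eq:NPC}; and (ii) the MINRES residual identity $\dotprod{\ggk, \rrktp} = -\vnorm{\rrktp}^{2}$, which follows by writing $\ggk = -\rr_{k}^{(0)}$ and invoking the standard orthogonality $\dotprod{\rrktp, \HHk \ssktp} = 0$ already encoded in \cref{lemma:MINRES_properties}. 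Combined with \cref{assmpt:residual_NPC}, (ii) yields the clean directional derivative bound $-\dotprod{\ggk, \ddk} = \vnorm{\rrktp}^{2} \geq \omega^{2} \vnorm{\ggk}^{2}$.

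Next, I would invoke the Lipschitz-Hessian Taylor expansion from \cref{assmpt:lipschitz}: for any $\alpha > 0$,
\begin{align*}
f(\xxk + \alpha \ddk) - f(\xxk) - \alpha \rho \dotprod{\ggk, \ddk} &\leq \alpha(1-\rho)\dotprod{\ggk, \ddk} + \frac{\alpha^{2}}{2}\dotprod{\ddk, \HHk \ddk} + \frac{\LH}{6}\alpha^{3} \vnorm{\ddk}^{3}.
\end{align*}
Since the quadratic curvature term is non-positive and the linear term is strictly negative, dropping the curvature term and solving the resulting cubic inequality in $\alpha$ gives the sufficient condition
\begin{align*}
\alpha^{2} \leq \frac{6(1-\rho)(-\dotprod{\ggk, \ddk})}{\LH \vnorm{\ddk}^{3}}
\end{align*}
for \cref{cond:Armijo} to hold. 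Consequently the largest admissible step-size, and hence the $\alphak$ returned by \cref{alg:line_search_forward}, satisfies a matching lower bound. Substituting into the Armijo inequality $f(\xxkn) - f(\xxk) \leq \alphak \rho \dotprod{\ggk, \ddk}$ yields
\begin{align*}
f(\xxkn) - f(\xxk) &\leq -\rho \sqrt{\frac{6(1-\rho)}{\LH}} \frac{(-\dotprod{\ggk, \ddk})^{3/2}}{\vnorm{\ddk}^{3/2}} = -\rho \sqrt{\frac{6(1-\rho)}{\LH}} \vnorm{\rrktp}^{3/2},
\end{align*}
where the equality uses $\ddk = \rrktp$ together with $-\dotprod{\ggk, \rrktp} = \vnorm{\rrktp}^{2}$. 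A final application of \cref{assmpt:residual_NPC} converts $\vnorm{\rrktp}^{3/2}$ into $\omega^{3/2}\vnorm{\ggk}^{3/2}$, producing the stated constant $c_{3}$.

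The main obstacle is really only the first step: securing the sharp identity $\dotprod{\ggk, \rrktp} = -\vnorm{\rrktp}^{2}$. Without it, one could only appeal to Cauchy--Schwarz to get $-\dotprod{\ggk, \ddk} \leq \vnorm{\ggk}\vnorm{\rrktp}$, and the ensuing descent bound would lose the clean cube root structure that is needed to match the $\bigO{\epsg^{-3/2}}$ complexity later established for \cref{alg:NewtonMR_1st}. A minor technicality is that \cref{alg:line_search_forward} produces step-sizes quantized by the backtracking factor $\zeta$; however, since we only need a lower bound on $\alphak$, and since \cref{alg:NewtonMR_1st} is stated as selecting the \emph{largest} admissible step-size, this issue is absorbed into the constant and does not affect the leading-order bound.
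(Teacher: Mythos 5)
Your proposal is correct and follows essentially the same route as the paper's own proof: identify $\ddk = \rrktp$, use the MINRES residual identity $\dotprod{\rrktp,\ggk} = -\vnorm{\rrktp}^2$ from \cref{eq:rTg}, drop the non-positive curvature term in the cubic Taylor bound from \cref{assmpt:lipschitz}, solve the resulting inequality for the guaranteed step-size lower bound, and substitute back into \cref{cond:Armijo} before invoking \cref{assmpt:residual_NPC}. The only cosmetic difference is that you carry $(-\dotprod{\ggk,\ddk})^{3/2}/\vnorm{\ddk}^{3/2}$ symbolically until the last line, whereas the paper substitutes $\dotprod{\ddk,\ggk} = -\vnorm{\ddk}^2$ immediately; both lead identically to $-\rho\sqrt{6(1-\rho)/\LH}\,\vnorm{\rrktp}^{3/2}$ and then to $c_3$.
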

\begin{proof}       
	Since $ \dt = \text{`NPC'} $, we have $ \ddk = \rrktp$ and we must also have $\vnorm{\ddk} \geq \omega \vnorm{\ggk}$. From, \cref{eq:rTg}, we get
	\begin{align*}
		\dotprod{\ddk, \ggk} = \dotprod{\rrktp, \ggk} = - \vnorm{\rrktp}^2 = -\vnorm{\ddk}^{2}.
	\end{align*}
	From \cref{assmpt:lipschitz}, it follows that, for any $ \alpha > 0 $,
	\begin{align*}
		f(\xxk + \alpha \ddk) - f(\xxk) &\leq \alpha \dotprod{\ddk, \ggk} + \frac{\alpha^2}{2} \dotprod{\ddk, \HHk \ddk}  + \frac{\LH}{6} \alpha^3 \vnorm{\ddk}^3 \\
		&\leq - \alpha \vnorm{\ddk}^{2} + \frac{\LH}{6} \alpha^3 \vnorm{\ddk}^3.
	\end{align*}
	To satisfy the line-search criterion \cref{cond:Armijo}, we require that $ \alpha > 0 $ satisfies
	\begin{align*}
		- \alpha \vnorm{\ddk}^{2} + \frac{\LH}{6} \alpha^3 \vnorm{\ddk}^3 &\leq \alpha \rho \dotprod{\ddk,\ggk} = - \alpha \rho \vnorm{\ddk}^2,
	\end{align*}
	which implies that for the largest $ \alphak $ satisfying the line-search condition \cref{cond:Armijo}, we must have 
	\begin{align*}
		\alphak \geq \sqrt{\frac{6 \left( 1-\rho\right)}{\LH \vnorm{\ddk}}}.
	\end{align*}    
	It follows that
	\begin{align*}
		f(\xxk + \alphak \ddk) &\leq f(\xxk) + \alphak \rho \dotprod{\ddk,\ggk} \leq f(\xxk) - \rho \sqrt{\frac{6  \left( 1-\rho\right)}{\LH \vnorm{\ddk}}} \vnorm{\ddk}^{2}\\
		&= f(\xxk) - \rho \sqrt{\frac{6  \left( 1-\rho\right)}{\LH}} \vnorm{\ddk}^{3/2} \\
		&\leq f(\xxk) - \rho \omega^{3/2} \sqrt{\frac{6  \left( 1-\rho\right)}{\LH}} \vnorm{\ggk}^{3/2}.
	\end{align*} 
\end{proof}
Note that the result of \cref{lemma:NC_distance_opt} holds as long as the direction $ \ddk $ is a non-positive curvature direction. In other words, not only does a negative curvature direction can yield descent in $ f $, but also any direction of zero curvature can also be used to obtain reduction in the objective value.

Now, altogether, we can obtain the following optimal complexity result for \cref{alg:NewtonMR_1st}.

\begin{theorem}[Optimal Iteration Complexity of \cref{alg:NewtonMR_1st}]
	\label{thm:complexity_first_alg_opt}
	Under \cref{assmpt:Lg,assmpt:lipschitz,assmpt:T_regularity,assmpt:residual_NPC}, after at most
	\begin{align*}
		K \defeq \frac{\left(f(\xx_{0}) - f^{\star}\right) \epsg^{-3/2}}{\min \left\{ c_{1}, c_{2}, c_3\right\}},
	\end{align*}
	iterations of \cref{alg:NewtonMR_1st}, the approximate first-order optimality \cref{eq:small_g} is satisfied. Here, $c_{1}$, $c_{2}$ and $c_{3}$, are defined in \cref{lemma:SOL_distance_opt,lemma:NC_distance_opt}, and $ f^{\star} \defeq \min f(\xx) $.
\end{theorem}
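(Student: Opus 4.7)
The plan is a standard ``sum the per-step descent'' argument, where the per-step descent is furnished by \cref{lemma:SOL_distance_opt,lemma:NC_distance_opt}, and the main work is book-keeping across the two cases (SOL vs.\ NPC) as well as the termination boundary. Concretely, suppose the algorithm has not yet terminated at iteration $k$, so that $\vnorm{\ggk} > \epsg$. By construction, \cref{alg:MINRES} called with inexactness tolerance $\theta\sqrt{\epsg}$ will return either a $\mathrm{SOL}$ direction satisfying \cref{eq:inexactness} with $\eta = \theta\sqrt{\epsg}$, or a $\mathrm{NPC}$ direction. In the former case, \cref{lemma:SOL_distance_opt} gives
\begin{align*}
	f(\xxk) - f(\xxkn) \geq \min \left\{ c_{1},\, c_{2} \epsg,\, c_{2} \vnorm{\ggkk}^{2}/\epsg \right\},
\end{align*}
and in the latter case, \cref{lemma:NC_distance_opt} gives $f(\xxk) - f(\xxkn) \geq c_{3} \vnorm{\ggk}^{3/2}$.

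Next, I will convert each of these worst-case decrements into a uniform lower bound of order $\epsg^{3/2}$. Since $\epsg \in (0,1]$, we have $\epsg \ge \epsg^{3/2}$, so $c_{1} \geq c_{1}\epsg^{3/2}$ and $c_{2}\epsg \geq c_{2}\epsg^{3/2}$. For the NPC branch, $\vnorm{\ggk} > \epsg$ immediately gives $c_{3}\vnorm{\ggk}^{3/2} > c_{3}\epsg^{3/2}$. The only term that is not uniformly $\Omega(\epsg^{3/2})$ is $c_{2}\vnorm{\ggkk}^{2}/\epsg$, which can be small only if $\vnorm{\ggkk} \leq \epsg$, i.e., only at an iteration in which the algorithm is about to terminate. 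This is the one subtlety in the accounting.

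To handle it, let $K$ be the (first) iteration at which $\vnorm{\gg_{K}} \leq \epsg$, so that $\vnorm{\ggk} > \epsg$ for every $k \in \{0, 1, \ldots, K-1\}$. For every $k \in \{0,\ldots,K-2\}$ we have both $\vnorm{\ggk} > \epsg$ and $\vnorm{\ggkk} > \epsg$, so the descent in iteration $k$ is at least $\min\{c_{1}, c_{2}, c_{3}\}\epsg^{3/2}$. Telescoping,
\begin{align*}
	f(\xx_{0}) - f^{\star} \;\geq\; f(\xx_{0}) - f(\xx_{K-1}) \;\geq\; (K-1)\min\{c_{1}, c_{2}, c_{3}\}\,\epsg^{3/2},
\end{align*}
which rearranges to $K \leq (f(\xx_{0}) - f^{\star})/\bigl(\min\{c_{1},c_{2},c_{3}\}\,\epsg^{3/2}\bigr) + 1$, matching the stated bound (the $+1$ is absorbed into the constants, or handled by noting that iteration $K-1$ itself also achieves at least a positive descent so the algorithm exits no later than the claimed count).

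The main (mild) obstacle is the case distinction around the $c_{2}\vnorm{\ggkk}^{2}/\epsg$ term: one must be careful to count only iterations at which both endpoints are still above the tolerance when invoking the full $\Omega(\epsg^{3/2})$ descent. Everything else is a direct application of the two preceding descent lemmas and the standing smoothness/regularity assumptions \cref{assmpt:Lg,assmpt:lipschitz,assmpt:T_regularity,assmpt:residual_NPC}.
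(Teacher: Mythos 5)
Your proposal follows essentially the same route as the paper: the case split between \texttt{SOL} and \texttt{NPC} via \cref{lemma:SOL_distance_opt,lemma:NC_distance_opt}, the observation that the $c_2\vnorm{\ggkk}^2/\epsg$ term can only drop below $c_2\epsg$ at the terminal iteration, the restriction of the telescoping sum to iterations where both endpoints have gradient norm above $\epsg$, and the final comparison against $f(\xx_0)-f^{\star}$. That is precisely the paper's argument; the only difference is that the paper phrases it as a contradiction (``suppose $\|\gg_k\|\le\epsg$ for the first time at $k=K+1$''), which lets it sum the full $K$ good iterations $k=0,\dots,K-1$ and arrive directly at the stated $K$, whereas your forward-counting version only credits $K^*-1$ iterations (up to $k=K^*-2$) with the $\Omega(\epsg^{3/2})$ decrement and therefore ends with $K^*\le K+1$, one unit slack.

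The one thing I would flag is how you try to dispose of that extra $+1$. Saying ``iteration $K-1$ itself also achieves at least a positive descent'' does not close the gap: in the \texttt{SOL} branch that descent is bounded below only by $c_2\vnorm{\gg_{K^*}}^2/\epsg$, which can be made arbitrarily tiny since $\vnorm{\gg_{K^*}}\le\epsg$ and could be essentially zero, so it contributes nothing of order $\epsg^{3/2}$. Likewise ``absorbed into the constants'' is not available here because $c_1,c_2,c_3$ are fixed explicit quantities and the theorem's bound is stated exactly, not up to $\bigO{\cdot}$. The honest fix is to do what the paper does: assume termination has not occurred by iteration $K$, observe that then \emph{all} of $\gg_0,\dots,\gg_K$ have norm above $\epsg$, so iterations $0,\dots,K-1$ each decrease $f$ by at least $\min\{c_1,c_2,c_3\}\epsg^{3/2}$, and the accumulated decrease already reaches $f(\xx_0)-f^{\star}$, a contradiction. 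This is a cosmetic restructuring of your proof, not a different idea, but it is what removes the spurious $+1$ without any hand-waving.
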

\begin{proof}
	Suppose $ \| \ggk \| \leq \epsg $, for the first time, at $ k = K + 1 $. Then, we must have $ \vnorm{\gg_{k}} > \epsg, \; k = 0, \ldots, K $. Suppose at some iteration $ 0 \leq k \leq K-1 $, we have $ \dt = \text{`SOL'} $ in which case by \cref{lemma:SOL_distance_opt}, we get 
	\begin{align*}
		f(\xx_{k}) - f(\xx_{k+1}) \geq \min \left\{c_{1}, c_{2} \epsg, c_{2} {\vnorm{\ggkk}^{2}}/{\epsg} \right\} \geq  \min \left\{c_{1}, c_{2}\right\} \epsg.
	\end{align*}    
	Alternatively, if $ \dt = \text{`NPC'} $, then by \cref{lemma:NC_distance_opt}, we have 
	\begin{align*}
		f(\xx_{k}) - f(\xx_{k+1}) \geq c_3 \vnorm{\ggk}^{3/2} \geq c_3 \epsg^{3/2}.
	\end{align*}    
	It follows that 
	\begin{align*}
		f(\xx_{0}) - f(\xx_{K}) &\geq \sum_{k=0}^{K-1} f(\xx_{k}) - f(\xx_{k+1}) \\
		&> \sum_{k=0}^{K} \min \left\{\min \left\{ c_{1}, c_{2}\right\} \epsg, c_3 \epsg^{3/2}\right\} \\
		&> K \min \left\{ c_{1}, c_{2},c_3\right\} \epsg^{3/2} > f(\xx_{0}) - f^{\star},
	\end{align*}
	This implies $ f^{\star} > f(\xx_{K}) $, which is a contradiction. 
\end{proof}

\subsubsection{Optimal Operation Complexity}
\label{sec:op_comp}
In order to establish a complexity bound for \cref{alg:MINRES} to return a direction that either satisfies the inexactness condition \cref{eq:inexactness} or the NPC condition \cref{eq:NPC}, we will now make a few assumptions to uniformly bound some of the quantities that appear in \cref{sec:MINRES_complexity}.

\begin{assumption}
	\label{assmpt:prop}
	Recall the decomposition in \cref{eq:H_decomp_PSD,eq:H_decomp_PSD_ij}. There exists $\mu>0$, and $ \Lg^{2}/\left(\Lg^{2} + \theta^{2} \epsg \right) \leq \nu \leq 1 $ such that, for any $ \xx \in \real^{d} $, if $ \gg \notin \Null(\HH) $, then \underline{at least one} of the following holds. 
	\begin{enumerate}[label = {\bfseries (\roman*)}]
		\item \label{assmpt:prop:pseudo} If $ \psip \geq 1 $ and $ \psin \geq 1 $, there exists some $ 1 \leq j \leq \psip$ and $ \psip+\psiz+1 \leq j \leq \psi$ for which 
		\begin{subequations}
			\begin{align}
				\min\left\{\lambda_{i}, \abs{\lambda_{j}}\right\} &\geq \mu  \label{eq:property_pseudo}\\
				\vnorm{\left(\UU_{i+} \UU_{i+}^{\T} + \UU_{j-} \UU_{j-}^{\T}\right) \gg}^{2} &\geq \nu \vnorm{\UU\UUT\gg}^{2} \label{eq:property_null}
			\end{align}
			\item \label{assmpt:prop:pos} If $ \psip \geq 1 $, there exists some $ 1 \leq i \leq \psip$, for which 
			\begin{align}
				\lambda_{i} &\geq \mu \label{eq:property_pseudo_pos}\\
				\vnorm{\UU_{i+} \UU_{i+}^{\T} \gg}^{2} &\geq \nu \vnorm{\UU\UUT\gg}^{2}   \label{eq:property_null_pos}
			\end{align}
			\item \label{assmpt:prop:neg} If $ \psin \geq 1 $, there exists some $ \psip+\psiz+1 \leq j \leq \psi$, for which 
			\begin{align}
				\abs{\lambda_{j}} &\geq \mu \label{eq:property_gap}\\ 
				\vnorm{\UU_{j-} \UU_{j-}^{\T} \gg}^{2} &\geq \nu \vnorm{\UU\UUT \gg}^{2}   \label{eq:property_null_neg}
			\end{align}
		\end{subequations}
	\end{enumerate}
\end{assumption}
\cref{assmpt:prop} is relatively mild. In essence, \cref{assmpt:prop} only necessitates that $ \HH $ has \emph{some} $\gg$-relevant eigenvalue which is sufficiently large, and $ \gg $ has non-trivial projection on the corresponding eigen-space. 
In fact, \cref{eq:property_pseudo,eq:property_null} are significant relaxations of \cite[Assumptions 3 and 4]{roosta2018newton}. Indeed, the regularity conditions \cref{eq:property_pseudo,eq:property_null} are only required to hold for some $ 1 \leq i \leq \psip$ and $ \psip+\psiz+1 \leq j \leq \psi$ among the $\gg$-related spectrum of $ \HH $, whereas in \cite{roosta2018newton}, such conditions are assumed for the entire spectrum of $ \HH $. 
More importantly, \cite[Assumption 4]{roosta2018newton} partitions $ \real^{d} $ into to complementary spaces, namely $ \range(\HH) $ and its orthogonal complement, $ \Null(\HH) $, and then relates the magnitude of the projection of $ \gg $ on these two sub-spaces. Assumption \cref{eq:property_null} does a similar partitioning but restricted to the sub-space $ \range(\UU) $, as opposed to the entire $ \real^{d} $. More specifically, \cref{eq:property_null} merely relates the magnitude of the projection of $ \gg $ on the sub-space $ \UU_{i+} $ and/or $ \UU_{j-} $ to that on the complement of this space with respect to $ \UU $. 
In fact, \cref{eq:property_null} is always trivially satisfied with $ \nu = 1 $, $ i=\psip $ and $ j = \psip+\psiz+1 $. As a result, replacing the entirety of \cref{assmpt:prop} with a stronger requirement that $ \vnorm{\UUT \HHd \UU} \leq 1/\mu $ (or equivalently $ \min\left\{\lambda_{\psip}, \abs{\lambda_{\psip+\psiz+1}}\right\}  \geq \mu$) is entirely sufficient to establish all the results of this paper with $ \nu =1 $. 
%
We also note that none of the properties \labelcref{assmpt:prop:pseudo,assmpt:prop:pos,assmpt:prop:neg} in \cref{assmpt:prop} is strictly weaker/stronger than the other two. In fact, we can see that either condition \cref{eq:property_null_pos} or \cref{eq:property_null_neg} implies \cref{eq:property_null}, where as \cref{eq:property_pseudo} is stronger than either of \labelcref{eq:property_pseudo_pos} or \labelcref{eq:property_gap}.

Under Assumptions \ref{assmpt:Lg} and \ref{assmpt:prop}-\labelcref{assmpt:prop:neg}, \cref{cor:MINRES_complexity_NC} implies that if $ \psin \geq 1 $, then after at most 
\begin{align}   
	\label{eq:complexity_NC}
	\TN &\defeq \min \left\{\left \lceil \left(\frac{\sqrt{2(\Lg +\mu)/\mu}}{4} \right)\log\left[\frac{2 (\Lg +\mu) (1-\nu)}{\mu\nu}\right] +1\right \rceil, g \right\},
\end{align}
iterations of \cref{alg:MINRES}, the NPC condition \cref{eq:NPC} must be detected. Of course, if $ \gg \in \Null(\HH) $, then $ \gg $ itself is immediately declared as the zero curvature direction at the outset. If $ \nu = 1 $, then Assumptions \ref{assmpt:prop}-\labelcref{assmpt:prop:neg} implies that all non-zero $ \gg- $relevant eigenvalues of $ \HH $ are in fact negative, which in turn implies $ \TN = 1 $, i.e., the NPC direction is detected in a single iteration.
Also, under \cref{assmpt:prop}, we have from \cref{lemma:MINRES_complexity_Sol,cor:MINRES_complexity_Sol,lemma:inexactness} that after at most 
\begin{align}
	\label{eq:complexity_Sol}
	\TS &\defeq \min \left\{ \left \lceil\frac{\sqrt{\Lg/\mu}}{4} \log \left[ {4}/\left({\frac{\eta^{2}}{\left( \Lg^{2} + \eta^{2} \right)} - (1-\nu)}\right)\right] +1\right \rceil, g\right\},
\end{align}
iterations, the inexactness condition \cref{eq:inexactness} is satisfied. 

Putting this all together, we conclude that if Property  \labelcref{assmpt:prop:neg} of \cref{assmpt:prop} holds, \cref{alg:MINRES} returns a direction in at most $ \min\{\TN,\TS\} $ iterations. Indeed, in this case, it takes at most $ \TN $ iterations to detect the NPC condition \cref{eq:NPC}. It also takes at most $ \TS $ iterations to satisfy the inexactness condition \cref{eq:inexactness}. \cref{alg:MINRES} terminates if either of these occurs first. If $ \psin = 0 $, however, no negative curvature is detected across iterations, and \cref{alg:MINRES} terminates after st most $ \TS $ iteration. The returned direction in this case always satisfies \cref{eq:inexactness}.

We can now combine \cref{thm:complexity_first_alg_opt} with the complexity analysis of MINRES in \cref{sec:MINRES_complexity} to obtain optimal first-order operation complexity, i.e., the total number of gradient and Hessian-vector product evaluations for \cref{alg:NewtonMR_1st} to find a point that satisfies \cref{eq:small_g}. Note that, unlike the iteration complexity of \cref{thm:complexity_first_alg_opt}, to obtain operation complexity, we need to consider \cref{assmpt:prop}.

\begin{theorem}[Optimal Operation Complexity of \cref{alg:NewtonMR_1st}]
	\label{thm:complexity_first_Hv_opt}
	Suppose $ \dn $ is sufficiently large. Under \cref{assmpt:Lg,assmpt:prop,assmpt:T_regularity,assmpt:lipschitz,assmpt:residual_NPC}, after at most $\bigO{\epsg^{-3/2}}$ gradient and Hessian-vector product evaluations, \cref{alg:NewtonMR_1st} satisfies the approximate first-order optimality \cref{eq:small_g}. 
\end{theorem}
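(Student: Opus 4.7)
The plan is to combine the outer iteration complexity from \cref{thm:complexity_first_alg_opt} with a worst-case bound on the per-iteration cost of the MINRES sub-solver. Since \cref{alg:MINRES} uses exactly one Hessian-vector product per inner iteration (cf.\ \hyperref[alg:MINRES]{Step 5}), and since Newton-MR performs only a constant number of gradient evaluations per outer iteration (one to assemble $\ggk$ and check the stopping criterion), it suffices to control the inner-iteration count of \cref{alg:MINRES}.

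First, by \cref{thm:complexity_first_alg_opt}, the number of outer iterations required to meet \cref{eq:small_g} is bounded by $K = \mathcal{O}(\epsg^{-3/2})$. Next, at outer iteration $k$, \cref{alg:MINRES} is invoked with tolerance $\eta = \theta\sqrt{\epsg}$, and the discussion immediately preceding the theorem shows that it returns after at most $\min\{\TN, \TS, g\}$ inner iterations, with $\TN$ and $\TS$ given by \cref{eq:complexity_NC,eq:complexity_Sol}. Under \cref{assmpt:prop}, $\TN$ depends only on $L_{\gg}$, $\mu$, and $\nu$, and is therefore $\tilde{\mathcal{O}}(\sqrt{L_{\gg}/\mu})$, independent of $\epsg$. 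For $\TS$, substituting $\eta = \theta\sqrt{\epsg}$ and using the lower bound on $\nu$ from \cref{assmpt:prop}, the argument of the logarithm in \cref{eq:complexity_Sol} is
\begin{align*}
\frac{4}{\theta^{2}\epsg/(L_{\gg}^{2}+\theta^{2}\epsg) - (1-\nu)},
\end{align*}
which, for $\nu$ strictly separated from its lower limit $L_{\gg}^{2}/(L_{\gg}^{2}+\theta^{2}\epsg)$, is $\mathcal{O}(\text{poly}(1/\epsg))$, so $\TS = \tilde{\mathcal{O}}(\sqrt{L_{\gg}/\mu})$ with the $\tilde{\mathcal{O}}$ absorbing $\log(1/\epsg)$ factors. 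The hypothesis that $\dn$ be sufficiently large ensures the $\min$ with $g \leq \dn$ in \cref{eq:complexity_NC,eq:complexity_Sol} is not the binding term, so the per-call Hessian-vector product cost is $\tilde{\mathcal{O}}(1)$ in $\epsg$.

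Multiplying the per-iteration cost by the number of outer iterations gives a total operational cost of $\mathcal{O}(\epsg^{-3/2}) \cdot \tilde{\mathcal{O}}(1) = \tilde{\mathcal{O}}(\epsg^{-3/2})$ Hessian-vector products, plus $\mathcal{O}(\epsg^{-3/2})$ gradient evaluations. Note that function evaluations incurred during line search (\cref{alg:line_search,alg:line_search_forward}) are not counted among the dominant costs, consistent with the conventions adopted in the excerpt.

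The main obstacle is treating the logarithmic term in $\TS$ cleanly at the boundary $\nu = L_{\gg}^{2}/(L_{\gg}^{2}+\theta^{2}\epsg)$, where the denominator collapses and the bound degenerates. Resolving this is precisely the role of the ``$\dn$ sufficiently large'' hypothesis together with the intrinsic cap $g \leq \dn$: when $\nu$ is genuinely at the boundary and $\TS$ blows up, one falls back on \cref{lemma:T_PSD} to terminate in at most $g$ iterations, and the complexity is controlled in terms of $\dn$ rather than $\epsg$. For the asymptotic regime $\epsg \to 0$ of interest, the $\TS$ branch dominates and yields the claimed $\tilde{\mathcal{O}}(\epsg^{-3/2})$ bound.
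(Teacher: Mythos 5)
Your argument is essentially the same as the paper's: one Hessian--vector product per inner MINRES iteration, inner iterations bounded by $\TS=\tbigO{1}$ under \cref{assmpt:Lg,assmpt:prop}, outer iterations bounded by $\bigO{\epsg^{-3/2}}$ from \cref{thm:complexity_first_alg_opt}, multiply. The paper chooses to partition the outer iterations into at most $\KS=\bigO{\epsg^{-1}}$ \texttt{SOL} steps and at most $\KN=\bigO{\epsg^{-3/2}}$ \texttt{NPC} steps before multiplying by $\TS$, but since both groups carry the same per-call MINRES cost, this finer split buys nothing here and your coarser bound $K=\bigO{\epsg^{-3/2}}$ yields the identical conclusion.

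One point in your last paragraph is misstated and should be corrected. You suggest that when $\nu$ sits exactly at its lower limit $\Lg^2/(\Lg^2+\theta^2\epsg)$ and the logarithmic term in \cref{eq:complexity_Sol} degenerates, one can ``fall back on \cref{lemma:T_PSD} to terminate in at most $g$ iterations'' and that this is where ``$\dn$ sufficiently large'' earns its keep. That is backwards: if the per-call cost were only controlled by the cap $g\le \dn$, the total cost would be $\dn\,\bigO{\epsg^{-3/2}}$---which, with $\dn$ large, is the \emph{opposite} of what you want and is precisely the fallback bound the paper states when \cref{assmpt:prop} fails or $\dn$ is small. The role of ``$\dn$ sufficiently large'' is exactly to ensure the $\min\{\cdot,g\}$ in \cref{eq:complexity_Sol} is \emph{not} binding, so the logarithmic bound, not the cap, controls $\TS$. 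The degenerate-$\nu$ case is not saved by the cap; it is precluded by \cref{assmpt:prop}, which one should read as keeping $\tfrac{\eta^2}{\Lg^2+\eta^2}-(1-\nu)$ bounded below by a quantity decaying no faster than polynomially in $\epsg$, so that the logarithm only contributes a $\log(1/\epsg)$ factor absorbed into $\tbigO{\cdot}$.
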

\begin{proof}
	Every iterations of \cref{alg:MINRES} requires one Hessian-vector product. By the proof of \cref{thm:complexity_first_alg_opt}, we see that at most 
	\begin{align*}
		\KS \triangleq \frac{\left(f(\xx_{0}) - f^{\star}\right) \epsg^{-1}}{\min \left\{ c_{1}, c_{2}\right\}},
	\end{align*}
	iterations of \cref{alg:NewtonMR_1st} uses directions with $ \dt = \text{``SOL''} $ from \cref{alg:MINRES}. Similarly, at most 
	\begin{align*}
		\KN \triangleq \frac{\left(f(\xx_{0}) - f^{\star}\right) \epsg^{-3/2}}{c_3},
	\end{align*}
	iterations employs a NPC direction. 
	the total number of gradient or Hessian-vector products for \cref{alg:NewtonMR_1st} to reach a solution satisfying \cref{eq:small_g} is at most
	\begin{align*}
		\TS \cdot \KS + \TN \cdot \KN \in \tbigO{1} \bigO{\epsg^{-1}} + \bigO{1} \bigO{\epsg^{-3/2}} \in \bigO{\epsg^{-3/2}}.
	\end{align*} 
\end{proof}

Note that, for the first-order operation complexity, Newton-MR \cref{alg:NewtonMR_1st} achieves the optimal\footnote{Recall that the optimal first-order iteration complexity is also $ \bigO{\epsg^{-3/2}} $ \cite{cartis2022evaluation}.} rate of $ \bigO{\epsg^{-3/2}} $, which is a significant improvement over the rate $ \tbigO{\epsg^{-7/4}} $ in Newton-CG variants \cite{royer2020newton,curtis2021trust}. This is  thanks to the fact that, unlike the Newton-CG variants in \cite{royer2020newton,curtis2021trust}, which employ a $\epsH$-perturbed version of the Hessian matrix,  the sub-problems of \cref{alg:NewtonMR_1st}  involve unperturbed Hessian. We end this section by noting that without \cref{assmpt:prop}, or when $ d $ is small, the total number of operations with gradients and Hessian-vector products for \cref{alg:NewtonMR_1st} to obtain \cref{eq:small_g} is at most $ \dn \bigO{\epsg^{-3/2}} $.

\subsection{Newton-MR With Second-order Complexity Guarantee}
\label{sec:NewtonMR_second}
\cref{alg:NewtonMR_1st} provides a first-order complexity guarantee \cref{eq:small_g} and hence the iterations are terminated once $ \vnorm{\ggk} \leq \epsg $. 
In non-convex optimization, however, $ \vnorm{\ggk} \leq \epsg $, in particular $ \ggk=\zero $, does not necessarily imply that the algorithm has converged to (a vicinity of) a local minimum. Hence, it if often desired to provide stronger convergence guarantees in the form of both \cref{eq:small_g} and \cref{eq:small_eig}. \cref{alg:NewtonMR_2nd} depicts a variant of Newton-MR for which we can establish second-order complexity guarantees. As it can be seen, for the most part, \cref{alg:NewtonMR_2nd} is identical to \cref{alg:NewtonMR_1st}. The main difference lies in the cases of $ \vnorm{\ggk} \le \epsg $ where \cref{alg:NewtonMR_1st} terminates, while \cref{alg:NewtonMR_2nd} aims to either find a negative curvature direction or to certify \cref{eq:small_eig}.

\begin{algorithm}
	\caption{Newton-MR With Second-order Complexity Guarantee}
	\label{alg:NewtonMR_2nd}
	\begin{algorithmic}[1]
		\vspace{1mm}
		\STATE \textbf{Input:} 
		\vspace{1mm}
		\begin{itemize}[leftmargin=*,wide=0em, noitemsep,nolistsep,label = {\bfseries -}]
			\item Initial point:    $ \xx_{0} $
			\item Approximate second-order optimality tolerance: $ 0 < \epsg \leq 1 $ and $ 0 < \epsH \leq 1 $
			\item Inexactness tolerance: $ \theta >  0$
		\end{itemize}
		\STATE $ k = 0 $
		\vspace{1mm}
		\WHILE {not terminate}
		\vspace{1mm}
		\IF{$ \| \ggk \| > \epsg $}
		\vspace{1mm}
		\STATE Call \cref{alg:MINRES} as $\left[\ddk,\dt\right] = \text{MINRES}(\HHk, \ggk, \theta \sqrt{\epsg})$
		\vspace{1mm}
		\IF{ $ \dt = \text{`SOL'} $}
		\vspace{1mm}
		\STATE Find the largest $ 0 < \alphak \leq 1$ satisfying \cref{cond:Armijo} with $ 0 < \rho < 1/2 $ using \cref{alg:line_search}
		\ELSE
		\vspace{1mm}
		\STATE Find the largest $ \alphak > 0$ satisfying \cref{cond:Armijo} with $ 0 < \rho < 1 $ using \cref{alg:line_search_forward}
		\vspace{1mm}
		\ENDIF
		\vspace{1mm}
		\ELSE
		\vspace{1mm}
		\STATE Randomly generate $ \tgg $ from the uniform distribution on the unit sphere
		\vspace{1mm}
		\STATE \label{alg:NewtonMR_2nd_Lanczos} Call \cref{alg:MINRES} as $\displaystyle \left[\ddk,\dt\right] = \text{MINRES}\left(\HHk+\frac{\varepsilon_H}{2} \eye, \tgg, 0\right)$
		\vspace{1mm}
		\IF{ $ \dt = \text{`NPC'} $}
		\vspace{1mm}
		\STATE \label{alg:NewtonMR_2nd_Lanczos_sign} $ \ddk = - \sign(\dotprod{\ggk, \ddk}) \ddk / \| \ddk \| $ \quad (NB: ``$\sign(0)$'' can be arbitrarily chosen as ``$\pm 1$'') 
		\vspace{1mm}
		\STATE Find the largest $ \alphak > 0$ satisfying \cref{cond:Armijo_2nd} with $ 0 < \rho < 1 $ using \cref{alg:line_search_forward}
		\vspace{1mm}
		\ELSE
		\vspace{1mm}
		\STATE Terminate
		\vspace{1mm}
		\ENDIF
		\vspace{1mm}
		\ENDIF
		\vspace{1mm}
		\STATE $ \xx_{k+1} = \xxk + \alphak \ddk $
		\vspace{1mm}
		\STATE $ k = k + 1 $
		\vspace{1mm}
		\ENDWHILE
		\vspace{1mm}
		\STATE \textbf{Output:} $ \xxk $ satisfying second-order optimality \cref{eq:termination_second_order}
	\end{algorithmic}
\end{algorithm}

When $ \ggk $ is too small, calling \cref{alg:MINRES} with such $ \ggk $ may cease to provide beneficial utility in obtaining a descent direction. In fact, in the extreme case where $ \ggk = \zero $, \cref{alg:MINRES} returns $ \ddk = \zero $ and the optimization algorithm stagnates. 
As a remedy, and in the context of Newton-CG, \cite{royer2020newton} replaces $ \ggk $ with some appropriately chosen random vector $ \tgg $, namely drawn from the uniform distribution over the unit sphere. In addition, in lieu of the true Hessian $ \HHk $, some perturbation as $ \tHHk = \HHk + 0.5 \epsH \eye $ is used within the standard CG method. If $ \lambda_{\min}(\tHH) \leq -\epsH/2 $, then (with high probability) a negative curvature direction for $ \tHHk $ is encountered in certain numbers of iterations. Otherwise, it is concluded that $ \tHHk \succeq -\epsH/2 $ (with high probability), which in turn implies \cref{eq:small_eig}. In our Newton-MR setting, we adopt a similar approach.

\subsubsection{Optimal Iteration Complexity}    
We can use the complexity results from \cref{sec:NewtonMR_first} for all the iterations where $ \vnorm{\ggk} \geq \epsg $. The only missing piece is the implication of using the direction obtained from MINRES in the case where $ \vnorm{\ggk} \leq \epsg $.             
For small $ \ggk $, the line-search condition \cref{cond:Armijo} no longer provides a meaningful criterion for choosing the step-size. Hence, we replace the line-search \cref{cond:Armijo} with 
\begin{align}
	\label{cond:Armijo_2nd}
	f(\xxk + \alphak \ddk) \leq f(\xxk) + \hf \rho \alpha_{k}^2 \dotprod{\ddk, \HHk \ddk},
\end{align}
for some $ 0 < \rho < 1 $. Note that we can evaluate the term ``$ \dotprod{\ddk, \HHk \ddk} $'' in \cref{cond:Armijo_2nd} without any additional Hessian-vector product. Indeed, from \hyperref[alg:NewtonMR_2nd_Lanczos_sign]{Step 15} of \cref{alg:NewtonMR_2nd}, we have $ \ddk = - \sign{\dotprod{\ggk,\ddk}} \ddk / \| \ddk \| $, which using \cref{eq:NPC_curve}, gives
\begin{align*}
	\dotprod{\ddk, \HHk \ddk} &= \dotprod{\ddk, \tHHk \ddk} - \frac{\epsH}{2} \vnorm{\ddk}^2 \\
	&= \dotprod{\frac{\rrktp}{\vnorm{\rrktp}}, \frac{\tHHk \rrktp}{\vnorm{\rrktp}}} - \frac{\epsH}{2} = -c_{t-1} \gamma_{t} - \frac{\epsH}{2},
\end{align*}
where the residual $ \rrktp = -\tgg - \tHHk \ssktp$ and the quantity $c_{t-1} \gamma_{t}$ are those from calling \cref{alg:MINRES} as $\text{MINRES}\left(\HHk+0.5\epsH \eye, \tgg, 0\right)$.

\cref{lemma:NC_distance_pert} gives a worst-case decrease in $ f $ using the direction from \hyperref[alg:NewtonMR_2nd_Lanczos_sign]{Step 15} of \cref{alg:NewtonMR_2nd}.
\begin{lemma}
	\label{lemma:NC_distance_pert}  
	Suppose $ \lambda_{\min}(\HHk) \leq -\epsH $ and  \hyperref[alg:NewtonMR_2nd_Lanczos]{Step 13} of \cref{alg:NewtonMR_2nd} successfully returns a NPC direction. Under \cref{assmpt:lipschitz}, in \cref{alg:NewtonMR_2nd}, we have
	\begin{align*}
		f(\xxkn) - f(\xxk) \leq - c_4 \epsH^3,
	\end{align*}
	where
	\begin{align*}
		c_4 \triangleq - \frac{9 \rho  \left(1- \rho\right)^{2}}{16 \LH^{2}},
	\end{align*}
	$ \LH $, and $ \rho $ are as in \cref{assmpt:lipschitz,cond:Armijo_2nd}. 
\end{lemma}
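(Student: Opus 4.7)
The plan is to mirror the structure of \cref{lemma:NC_distance_opt}, combining three ingredients: (i) a characterization of the direction $\ddk$ produced by Steps~13--15, (ii) the cubic Taylor bound from \cref{assmpt:lipschitz}, and (iii) a lower bound on $\alphak$ forced by the modified line-search criterion \cref{cond:Armijo_2nd}. First I would observe that after the normalization/sign flip in \hyperref[alg:NewtonMR_2nd_Lanczos_sign]{Step 15} we have $\vnorm{\ddk}=1$ and $\dotprod{\ggk,\ddk}\le 0$. Since $\ddk$ is certified by \cref{alg:MINRES} to be a NPC direction for $\tHHk=\HHk+\tfrac{\epsH}{2}\eye$, \cref{eq:NPC_curve} gives $\dotprod{\ddk,\tHHk\ddk}\le 0$, which, using $\vnorm{\ddk}=1$, rearranges to the key curvature bound $\dotprod{\ddk,\HHk\ddk}\le -\epsH/2$.

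Next I would expand $f(\xxk+\alpha\ddk)$ via \cref{assmpt:lipschitz} and drop the first-order term (non-positive by the sign flip):
\begin{align*}
f(\xxk+\alpha\ddk) - f(\xxk) \le \frac{\alpha^{2}}{2}\dotprod{\ddk,\HHk\ddk} + \frac{\LH}{6}\alpha^{3}.
\end{align*}
Setting the right-hand side $\le \tfrac{\rho}{2}\alpha^{2}\dotprod{\ddk,\HHk\ddk}$ and simplifying shows that the modified line-search criterion \cref{cond:Armijo_2nd} is satisfied whenever $\alpha \le \tfrac{3(1-\rho)}{\LH}\bigl(-\dotprod{\ddk,\HHk\ddk}\bigr)$. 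Consequently, by the backtracking/forward-tracking procedure (\cref{alg:line_search_forward}), the accepted step-size obeys
\begin{align*}
\alphak \ge \frac{3(1-\rho)}{\LH}\bigl(-\dotprod{\ddk,\HHk\ddk}\bigr).
\end{align*}

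Plugging this lower bound for $\alphak$ back into \cref{cond:Armijo_2nd} yields
\begin{align*}
f(\xxkn) - f(\xxk) &\le -\frac{\rho}{2}\alphak^{2}\bigl(-\dotprod{\ddk,\HHk\ddk}\bigr) \\
&\le -\frac{9\rho(1-\rho)^{2}}{2\LH^{2}}\bigl(-\dotprod{\ddk,\HHk\ddk}\bigr)^{3}.
\end{align*}
Finally, using $-\dotprod{\ddk,\HHk\ddk}\ge \epsH/2$ from the first step gives $\bigl(-\dotprod{\ddk,\HHk\ddk}\bigr)^{3}\ge \epsH^{3}/8$, hence $f(\xxkn)-f(\xxk)\le -\tfrac{9\rho(1-\rho)^{2}}{16\LH^{2}}\epsH^{3}$, which is the claimed bound (modulo what appears to be a stray sign in the stated definition of $c_{4}$).

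There is no serious obstacle; the main subtlety is purely bookkeeping: the NPC certificate is for the shifted matrix $\tHHk$, so it only delivers $\dotprod{\ddk,\HHk\ddk}\le -\epsH/2$ rather than $-\epsH$, and this factor of one half propagates into the constant $c_{4}$. Also noteworthy is that the hypothesis $\lambda_{\min}(\HHk)\le -\epsH$ is never actually used in the chain above beyond guaranteeing that \cref{alg:MINRES} in \hyperref[alg:NewtonMR_2nd_Lanczos]{Step 13} does produce a NPC direction with high probability (handled elsewhere in the analysis of second-order complexity).
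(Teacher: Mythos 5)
Your proposal is correct and follows essentially the same route as the paper's proof: obtain $\dotprod{\ddk,\HHk\ddk}\le -\epsH/2$ from the NPC certificate on $\tHHk$ together with unit normalization, use the Lipschitz--Hessian cubic Taylor bound and the non-positivity of $\dotprod{\ddk,\ggk}$ to derive the step-size lower bound $\alphak \ge \tfrac{3(1-\rho)\epsH}{2\LH}$, and substitute back into \cref{cond:Armijo_2nd}. The only cosmetic difference is that you carry $-\dotprod{\ddk,\HHk\ddk}$ symbolically one step longer before replacing it by $\epsH/2$, whereas the paper substitutes earlier; the final constant is identical. You are also right that the leading minus sign in the paper's displayed definition of $c_4$ is a typographical error (as written, $-c_4\epsH^3>0$ and the bound would be vacuous); the intended constant is $c_4=\tfrac{9\rho(1-\rho)^2}{16\LH^2}>0$.
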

\begin{proof} 
	By assumption, \cref{alg:MINRES} returns a NPC direction $ \rrktp = -\tgg - \tHHk \ssktp$ for some $ 1 \leq t \leq d $. Since $ \ddk = - \sign{\dotprod{\rrktp, \ggk}} \rrktp / \vnorm{\rrktp} $, we have 
	\begin{align*}
		\dotprod{\ddk, \ggk} &\leq 0, \quad \text{and} \quad \dotprod{\ddk, \HHk \ddk} \leq - \frac{\epsH}{2} \vnorm{\ddk}^2 = - \frac{\epsH}{2}.
	\end{align*}    
	Similar to the proof of \cref{lemma:NC_distance_opt}, we have 
	\begin{align*}
		f(\xxk + \alpha \ddk) - f(\xxk) &\leq \alpha \dotprod{\ddk, \ggk} + \frac{\alpha^2}{2} \dotprod{\ddk, \HHk \ddk}  + \frac{\LH}{6} \alpha^3 \vnorm{\ddk}^3 \\
		&\leq \frac{\alpha^{2}}{2} \dotprod{\ddk, \HHk \ddk} + \frac{\LH}{6} \alpha^3.
	\end{align*}
	and it follows that for the largest $ \alphak $ satisfying the line-search condition \cref{cond:Armijo_2nd}, we must have 
	\begin{align*}
		\alphak \geq \frac{3 \left(1- \rho\right) \epsH}{2 \LH}.
	\end{align*}    
	So, it follows that
	\begin{align*}
		f(\xxk + \alphak \ddk) - f(\xxk) \leq \hf \rho \alpha_{k}^2 \dotprod{\ddk, \HHk \ddk} \leq - \frac{1}{4} \rho \alpha_{k}^2 \epsH < - \frac{9 \rho  \left(1- \rho\right)^{2} \epsH^{3}}{16 \LH^{2}}.
	\end{align*}
\end{proof}

\cref{lemma:NC_distance_pert}  assumes that \hyperref[alg:NewtonMR_2nd_Lanczos]{Step 13} of \cref{alg:NewtonMR_2nd} successfully returns a NPC direction, which can be guaranteed to hold with probability one over the draws of $ \tgg $. 
Indeed, by randomly drawing $ \tgg $ from the uniform distribution on the unit sphere, we ensure that, with probability one, the grade of $ \tgg $ with respect to $ \tHHk $ is $ |\Theta(\tHHk)| $, i.e., $ \Psi(\tHH, \tgg) = \Theta(\tHH) $. Hence, \cref{thm:H_PSD} implies that, with probability one, $ \tHHk \not \succeq \zero $ if and only if \cref{alg:MINRES} detects a NPC direction for $ \tHHk $. This, in turn, amounts to either obtaining a direction of negative curvature for $ \HHk $ or else guaranteeing that $ \tHHk \succ \zero $, which is in fact a certificate for \cref{eq:small_eig}. As a result, if $ \lambda_{\min}(\HHk) \leq -\epsH $, \hyperref[alg:NewtonMR_2nd_Lanczos]{Step 13} of \cref{alg:NewtonMR_2nd} successfully returns a NPC direction with probability one.

We are now in a position to give the iteration and operation complexities of \cref{alg:NewtonMR_2nd} to achieve approximate second-order optimality condition \cref{eq:termination_second_order}.

\begin{theorem}[Optimal Iteration Complexity of \cref{alg:NewtonMR_2nd}]
	\label{thm:complexity_second_alg}
	Define 
	\begin{align*}
		K \defeq \left \lceil\frac{2 \left(f(\xx_{0}) - fs\right)}{\min \left\{ c_{1}, c_{2}, c_{3}, c_4\right\}} \max\left\{\epsg^{-3/2}, \epsH^{-3}\right\} + 1 \right \rceil,
	\end{align*}
	where $ c_1 $, $ c_2 $, $ c_3 $ and $ c_4 $ are as in \cref{lemma:SOL_distance_opt,lemma:NC_distance_opt,lemma:NC_distance_pert}, and $ f^{\star} \defeq \min f(\xx) $. Under \cref{assmpt:Lg,assmpt:lipschitz,assmpt:T_regularity,assmpt:residual_NPC}, after at most $ K $ iterations of \cref{alg:NewtonMR_2nd}, the approximate second-order optimality \cref{eq:termination_second_order} is satisfied with probability one.
\end{theorem}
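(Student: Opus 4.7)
The plan is a contradiction argument paralleling that of \cref{thm:complexity_first_alg_opt}, extended to accommodate the second branch of \cref{alg:NewtonMR_2nd}. Suppose none of $\xx_0, \ldots, \xx_K$ satisfies \cref{eq:termination_second_order}; then for every $k = 0, \ldots, K-1$, at least one of $\vnorm{\ggk} > \epsg$ or $\lambda_{\min}(\HHk) < -\epsH$ holds, so the algorithm does not terminate and produces a well-defined update at every iteration.

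First I would dispose of the randomized branch. When $\vnorm{\ggk} \leq \epsg$ and $\lambda_{\min}(\HHk) < -\epsH$, the vector $\tgg$ is drawn uniformly from the unit sphere, so with probability one its grade with respect to $\tHHk \defeq \HHk + (\epsH/2)\eye$ is maximal; equivalently $\Psi(\tHHk,\tgg) = \Theta(\tHHk)$ almost surely. Since $\lambda_{\min}(\tHHk) \leq -\epsH/2 < 0$, we have $\tHHk \not\succ \zero$, and \cref{thm:H_PSD} then ensures that \cref{alg:MINRES} at \hyperref[alg:NewtonMR_2nd_Lanczos]{Step 13} returns $\dt = \text{`NPC'}$ almost surely. \cref{lemma:NC_distance_pert} therefore yields $f(\xxk) - f(\xxkn) \geq c_4 \epsH^3$ at this iteration with probability one.

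For iterations with $\vnorm{\ggk} > \epsg$, \cref{lemma:NC_distance_opt} gives $f(\xxk) - f(\xxkn) \geq c_3 \epsg^{3/2}$ under the NPC outcome, and \cref{lemma:SOL_distance_opt} gives $f(\xxk) - f(\xxkn) \geq \min\{c_1,\, c_2\epsg,\, c_2\vnorm{\ggkk}^2/\epsg\}$ under SOL. The awkward $c_2\vnorm{\ggkk}^2/\epsg$ term is the main obstacle, since it need not be $\Omega(\epsg^{3/2})$. I resolve it by pairing consecutive iterations: if a SOL step at iteration $k$ yields $\vnorm{\ggkk} < \epsg^{5/4}$, then $\vnorm{\gg_{k+1}} < \epsg$ (using $\epsg \leq 1$), and iteration $k+1$ must enter the second branch; the no-termination hypothesis then forces $\lambda_{\min}(\HH_{k+1}) < -\epsH$, and the previous paragraph guarantees an almost-sure decrease of $c_4\epsH^3$ at iteration $k+1$. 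Hence every pair of consecutive non-terminating iterations contributes at least $\min\{c_1,\,c_2\epsg^{3/2},\,c_3\epsg^{3/2},\,c_4\epsH^3\}$ in objective value, and every single iteration whose SOL bound is not binding contributes at least the same amount.

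Summing these guarantees over $K$ iterations and using $f(\xx_K) \geq f^\star$ gives
\[
f(\xx_0) - f^\star \;\geq\; \frac{K-1}{2}\,\min\{c_1,\,c_2\epsg^{3/2},\,c_3\epsg^{3/2},\,c_4\epsH^3\} \;\geq\; \frac{K-1}{2}\min\{c_1,c_2,c_3,c_4\}\,\min\{\epsg^{3/2},\epsH^3\},
\]
where the second inequality uses $\epsg,\epsH \leq 1$ together with the identity $\max\{\epsg^{-3/2},\epsH^{-3}\}\cdot\min\{\epsg^{3/2},\epsH^3\} = 1$. Rearranging contradicts the definition of $K$. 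The ``probability one'' qualifier originates entirely from the grade-of-$\tgg$ argument of the second paragraph via \cref{thm:H_PSD}, and the pairing trick for the SOL branch is the key technical step that distinguishes this second-order analysis from the first-order one.
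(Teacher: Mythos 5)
Your proof is correct and takes essentially the same route as the paper's: a contradiction/telescoping argument that partitions iterations by gradient magnitude and MINRES outcome, invokes \cref{lemma:SOL_distance_opt,lemma:NC_distance_opt,lemma:NC_distance_pert} for the respective branches, invokes \cref{thm:H_PSD} with the almost-sure maximal-grade property of the random $\tgg$ to supply the probability-one qualifier, and handles the uninformative $c_2\vnorm{\ggkk}^2/\epsg$ term by observing that such iterations force $\vnorm{\gg_{k+1}} \leq \epsg$ and so can be paired with the ensuing small-gradient iteration. The only cosmetic difference is the threshold you use to trigger the pairing ($\vnorm{\ggkk} < \epsg^{5/4}$ versus the paper's cleaner $\vnorm{\ggkk} \leq \epsg$, which defines its set $\mathcal{K}_3$ and yields $\abs{\mathcal{K}_3} \leq \abs{\mathcal{K}_2}+1$); both deliver the same $\bigO{\max\{\epsg^{-3/2},\epsH^{-3}\}}$ bound.
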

\begin{proof}
	We provide the analysis, conditioned on the event that \hyperref[alg:NewtonMR_2nd_Lanczos]{Step 13} of \cref{alg:NewtonMR_2nd} is successful. Incorporating failure probabilities is straightforward.
	Suppose $ \| \ggk \| \leq \epsg $ and $ \lambda_{\min}(\HHk) \geq -\epsH $, for the first time, at $ k = K + 1 $. Then, we must have $ \vnorm{\gg_{k}} > \epsg$ or $\lambda_{\min}(\HHk) \leq -\epsH $ for any $k = 0, \ldots, K $.  
	Any iteration $k = 0, \ldots, K $ must belong to one of these three sets
	\begin{align*}
		\mathcal{K}_1 &= \{0 \leq k \leq K \mid \min \left\{\vnorm{\ggk}, \vnorm{\ggkk}\right\} > \epsg \}, \\
		\mathcal{K}_2 &= \{0 \leq k \leq K \mid \vnorm{\ggk} \leq \epsg \} = \{0 \leq k \leq K \mid \vnorm{\ggk} \leq \epsg \; \text{and} \; \lambda_{\min}(\HHk) \leq -\epsH \} \\
		\mathcal{K}_3 &= \{0 \leq k \leq K \mid  \vnorm{\ggk} > \epsg \geq \vnorm{\ggkk}\}.
	\end{align*}
	So, it follows that
	\begin{align*}
		f(\xx_{0}) - f^{\star} &\geq f(\xx_{0}) - f(\xx_{K+1}) = \sum_{k=0}^{K} f(\xx_{k}) - f(\xx_{k+1}) \\
		&\geq \sum_{k \in \mathcal{K}_1} f(\xx_{k}) - f(\xx_{k+1}) \geq \abs{\mathcal{K}_1} \min \left\{ c_{1}, c_{2}, c_{3}\right\} \epsg^{3/2},
	\end{align*}
	where the last inequality follows from \cref{lemma:SOL_distance_opt,lemma:NC_distance_opt}.
	Similarly, from \cref{lemma:NC_distance_pert}, we have
	\begin{align*}
		f(\xx_{0}) - f^{\star} &\geq \sum_{k=0}^{K} f(\xx_{k}) - f(\xx_{k+1}) \geq \sum_{k \in \mathcal{K}_2} f(\xx_{k}) - f(\xx_{k+1}) \geq \abs{\mathcal{K}_2} c_{4}  \epsH^{3}.
	\end{align*}
	Lastly, if $ k \in \mathcal{K}_3 $, then $ k+1 \in \mathcal{K}_2 $, and hence $ \abs{\mathcal{K}_3} \leq \abs{\mathcal{K}_2} + 1 $.
	
	Putting this all together, we have
	\begin{align*}
		\abs{\mathcal{K}_1} + \abs{\mathcal{K}_2} + \abs{\mathcal{K}_3} \leq \frac{\left(f(\xx_{0}) - f^{\star}\right)}{\min \left\{ c_{1}, c_{2}, c_{3}\right\}} \epsg^{-3/2} + \frac{2 \left(f(\xx_{0}) - f^{\star}\right)}{c_{4}} \epsH^{-3} + 1.
	\end{align*}
\end{proof}

\subsubsection{Optimal Operation Complexity}
We now discuss the complexity of \hyperref[alg:NewtonMR_2nd_Lanczos]{Step 13} of \cref{alg:NewtonMR_2nd}. 
Since \cref{alg:MINRES} builds on the exact same Krylov sub-space as the randomized Lanczos method, in the absence of any additional structural assumption on $ f $, we can use a similar complexity analysis as that in \cite{kuczynski1992estimating,royer2020newton}. Indeed, from \cite[Theorem 4.2]{kuczynski1992estimating} or \cite[Lemma 9]{royer2020newton} (adopted to our setting), with probability at least $ 1 - \delta $, after at most
\begin{align*}
	t \geq \min \left\{\left \lceil \left(\frac{1}{4\sqrt{\epsilon}} \right)\log\left[\frac{2.75 d}{\delta^{2}}\right] +1\right \rceil, d \right\},
\end{align*}
iterations, we have
\begin{align*}
	\lambda_{\min}(\tTTt) - \lambda_{\min}(\tHH) \leq \epsilon \left(\lambda_{\max}(\tHH) - \lambda_{\min}(\tHH)\right),
\end{align*}
where $ 0 < \delta < 1 $, $ \epsilon > 0 $, and $ \tTTt $ is the tridiagonal matrix as in \cref{eq:tridiagonal_T} obtained from $ \tHH = \HH + 0.5 \epsH \eye$ and $ \tgg $. Letting $ \epsilon = \epsH / (4\Lg) $, we get
\begin{align*}
	\lambda_{\min}(\tTTt) - \lambda_{\min}(\tHH) &\leq \frac{\epsH}{4 \Lg} \left(\lambda_{\max}(\tHH) - \lambda_{\min}(\tHH)\right) \\
	&= \frac{\epsH}{4 \Lg} \left(\lambda_{\max}(\HH) + \epsH + (-\lambda_{\min}(\HH) - \epsH)\right) \leq \frac{\epsH}{2},
\end{align*}
which implies $ \lambda_{\min}(\HH) = \lambda_{\min}(\tHH) - \epsH / 2 \geq \lambda_{\min}(\tTTt) - \epsH $. As a result, if after 
\begin{align}
	\label{eq:complexity_NC_Lanczos}   
	\TNL &\defeq \min \left\{\left \lceil \left(\frac{\sqrt{\Lg/\epsH}}{2} \right)\log\left[\frac{2.75 d}{\delta^{2}}\right] +1\right \rceil, d \right\},
\end{align}
iterations, no NPC direction is detected in \hyperref[alg:NewtonMR_2nd_Lanczos]{Step 13} of \cref{alg:NewtonMR_2nd}, i.e., $ \tTTt \succ \zero $ for all $t \leq \TNL$ (cf.\ \cref{lemma:MINRES_NPC_detector}), then we must have $ \lambda_{\min}(\HH) > - \epsH $, with probability $ 1-\delta $. In other words, if $ \lambda_{\min}(\HHk) \leq -\epsH $, \hyperref[alg:NewtonMR_2nd_Lanczos]{Step 13} of \cref{alg:NewtonMR_2nd} can find a direction of negative curvature for $ \tHHk $, with probability $ 1-\delta $, in at most $ \TNL $ iterations.


\hyperref[alg:NewtonMR_2nd_Lanczos]{Step 13} of \cref{alg:NewtonMR_2nd} is a probabilistic procedure. Hence, in order to guarantee success over the life of \cref{alg:NewtonMR_2nd}, we need to ensure a small failure probability for this step across all iterations. In particular, in order to get an overall and accumulative success probability of $ 1 - \delta $ for the entire $ K  $ iterations, the per-iteration failure probability is set as $(1- \sqrt[K]{(1-\delta)} )\in \mathcal{O}(\delta/K)$. This failure probability appears only in the ``log factor'' in \cref{eq:complexity_NC_Lanczos}, and so it is not a dominating cost. Hence, requiring that all $ K $ iterations are successful for a large $ K $, only necessitates a small (logarithmic) increase in worst-case iteration count of \cref{alg:MINRES} as part of \hyperref[alg:NewtonMR_2nd_Lanczos]{Step 13} of \cref{alg:NewtonMR_2nd}. For example, for $ K \in \bigO{\max\left\{\epsg^{-3/2}, \epsH^{-3}\right\}}$, we can set the per-iteration failure probability to $ \delta \cdot \min \left\{\epsg^{3/2}, \epsH^{3}\right\} $, and ensure that when Algorithm \ref{alg:NewtonMR_2nd} terminates, \hyperref[alg:NewtonMR_2nd_Lanczos]{Step 13} have been reliably successful, with an accumulative probability of $ 1-\delta $. 

\begin{theorem}[Operation Complexity of \cref{alg:NewtonMR_2nd}]
	\label{thm:complexity_second_Hv}
	Suppose $ \dn $ is sufficiently large. Under \cref{assmpt:Lg,assmpt:prop,assmpt:T_regularity,assmpt:lipschitz,assmpt:residual_NPC}, after at most $\max \left\{\tbigO{\epsg^{-3/2}}, \bigO{\epsH^{-7/2}} \right\}$ gradient and Hessian-vector product evaluations, \cref{alg:NewtonMR_2nd} satisfies the approximate second-order optimality \cref{eq:termination_second_order} with probability $ 1-\delta $. 
\end{theorem}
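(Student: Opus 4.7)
The plan is to combine the iteration-count bound of \cref{thm:complexity_second_alg} with per-iteration MINRES costs analogous to those used in \cref{thm:complexity_first_Hv_opt}, but now separating iterations into two regimes depending on the magnitude of $\ggk$. Concretely, given the overall iteration budget $K \in \mathcal{O}(\max\{\epsg^{-3/2},\epsH^{-3}\})$ from \cref{thm:complexity_second_alg}, I partition the indices $0\le k\le K$ into $\mathcal{K}_{\text{large}}=\{k:\|\ggk\|>\epsg\}$ and $\mathcal{K}_{\text{small}}=\{k:\|\ggk\|\le\epsg\}$, and bound the total gradient and Hessian-vector product cost contributed by each.

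For $k\in\mathcal{K}_{\text{large}}$, \cref{alg:NewtonMR_2nd} invokes MINRES exactly as in \cref{alg:NewtonMR_1st}, so under \cref{assmpt:prop} the per-call cost is at most $\min\{\TN,\TS\}\in\tbigO{1}$ by \cref{eq:complexity_NC,eq:complexity_Sol}. Mirroring the bookkeeping in the proof of \cref{thm:complexity_first_Hv_opt}, the number of ``SOL''-type iterations within $\mathcal{K}_{\text{large}}$ is $\bigO{\epsg^{-1}}$ and the number of ``NPC''-type iterations is $\bigO{\epsg^{-3/2}}$ (each using the corresponding per-iteration descent from \cref{lemma:SOL_distance_opt,lemma:NC_distance_opt}). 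This contributes $\tbigO{\epsg^{-3/2}}$ operations in total. For $k\in\mathcal{K}_{\text{small}}$, \cref{alg:NewtonMR_2nd} instead calls MINRES on the perturbed pair $(\HHk+\tfrac{\epsH}{2}\eye,\tgg)$, which by \cref{eq:complexity_NC_Lanczos} terminates in at most $\TNL\in\tbigO{\epsH^{-1/2}}$ iterations. Since by \cref{lemma:NC_distance_pert} each such iteration either triggers termination or produces a function decrease of order $\epsH^{3}$, the cardinality of $\mathcal{K}_{\text{small}}$ is $\bigO{\epsH^{-3}}$. Multiplying, the contribution from this regime is $\bigO{\epsH^{-3}}\cdot\tbigO{\epsH^{-1/2}}=\tbigO{\epsH^{-7/2}}$. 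Summing the two regimes and taking the dominating term yields the claimed $\max\{\tbigO{\epsg^{-3/2}},\tbigO{\epsH^{-7/2}}\}$ bound.

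For the probabilistic statement, I set the per-call failure probability of \hyperref[alg:NewtonMR_2nd_Lanczos]{Step 13} to $\delta/K$ (equivalently $\delta\cdot\min\{\epsg^{3/2},\epsH^{3}\}$ up to constants) and apply a union bound across all at most $K$ invocations to guarantee an aggregate success probability of at least $1-\delta$. Because this probability enters \cref{eq:complexity_NC_Lanczos} only through a $\log(1/\delta_{\text{per-iter}})$ factor, the adjustment inflates $\TNL$ only by an additional logarithmic factor in $1/\epsg$, $1/\epsH$, and $1/\delta$, which is absorbed into the $\tbigO{\cdot}$ notation. Conditioned on all such calls succeeding, the deterministic analysis of the previous paragraph applies verbatim.

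The main obstacle I foresee is not in the arithmetic of combining the two cost regimes, but in making the probabilistic bookkeeping watertight: since $K$ itself is defined in terms of $\epsg$ and $\epsH$, the per-iteration failure probability must be chosen before knowing exactly how many iterations will actually occur, and one must verify that inflating $\TNL$ by $\log(K/\delta)$ still keeps the per-call cost at $\tbigO{\epsH^{-1/2}}$ (the dependence on $K$ and $\delta$ is logarithmic and hence hidden in the $\tbigO{\cdot}$). A secondary but routine point is confirming that the assumption ``$\dn$ sufficiently large'' lets us drop the truncation ``$\min\{\cdot,d\}$'' in \cref{eq:complexity_NC,eq:complexity_Sol,eq:complexity_NC_Lanczos}, so that the logarithmic Krylov bounds are the binding ones.
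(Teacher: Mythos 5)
Your proposal takes essentially the same route as the paper: combine the iteration budget from \cref{thm:complexity_second_alg} with the per-call MINRES costs, splitting by gradient magnitude and assigning $\TS \in \tbigO{1}$ to the large-gradient regime and $\TNL \in \tbigO{\epsH^{-1/2}}$ (from \cref{eq:complexity_NC_Lanczos}) to the small-gradient regime, then union-bound the $\hyperref[alg:NewtonMR_2nd_Lanczos]{\text{Step 13}}$ failure probabilities across all $K$ invocations so that the $\log(K/\delta)$ inflation of $\TNL$ is absorbed into $\tbigO{\cdot}$. The paper uses the three-way partition $\mathcal{K}_1,\mathcal{K}_2,\mathcal{K}_3$ explicitly and writes the cost as $\TS(\abs{\mathcal{K}_1}+\abs{\mathcal{K}_3}) + \TNL\abs{\mathcal{K}_2}$, whereas you collapse $\mathcal{K}_1\cup\mathcal{K}_3$ into $\mathcal{K}_{\text{large}}$.

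That coarser split hides a bookkeeping gap you should flag. You assert that within $\mathcal{K}_{\text{large}}$ the number of ``SOL''-type iterations is $\bigO{\epsg^{-1}}$ by invoking \cref{lemma:SOL_distance_opt}, mirroring the argument in \cref{thm:complexity_first_Hv_opt}. But that lemma's decrease is $\min\{c_1, c_2\epsg, c_2\vnorm{\ggkk}^2/\epsg\}$, and for an index $k$ with $\vnorm{\ggk}>\epsg\geq\vnorm{\ggkk}$ (i.e.\ $k\in\mathcal{K}_3$), the last term can be arbitrarily small, so the $\bigO{\epsg^{-1}}$ count does not follow from descent alone. These iterations must instead be controlled via the combinatorial fact $\abs{\mathcal{K}_3}\leq\abs{\mathcal{K}_2}+1\in\bigO{\epsH^{-3}}$ established in the proof of \cref{thm:complexity_second_alg}, contributing an extra $\TS\cdot\abs{\mathcal{K}_3}\in\tbigO{\epsH^{-3}}$ to the total. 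Since this is dominated by $\tbigO{\epsH^{-7/2}}$, your final bound is unaffected, but as written the intermediate ``$\bigO{\epsg^{-1}}$ SOL-type iterations in $\mathcal{K}_{\text{large}}$'' claim is not supported and should be replaced by the $\mathcal{K}_1/\mathcal{K}_3$ accounting. The rest of the argument, including the probabilistic union bound and the observation that large $\dn$ makes the logarithmic Krylov bounds binding, is correct and in line with the paper.
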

\begin{proof}
	Again, we provide a deterministic analysis, which can trivially be modified to incorporate failure probabilities.
	Every iterations of \cref{alg:MINRES} requires one Hessian-vector product. Hence, similar to the proof of \cref{thm:complexity_first_Hv_opt}, under \cref{assmpt:prop}, using \cref{eq:complexity_Sol,eq:complexity_NC_Lanczos}, the total number of Hessian-vector products is no more than
	\begin{align*}
		\TS \left( \abs{\mathcal{K}_{1}} + \abs{\mathcal{K}_{3}} \right) + \TNL \abs{\mathcal{K}_{2}} 
		&\in \tbigO{1} \left( \bigO{\epsg^{-3/2}} + \bigO{\epsH^{-3}} \right) + \bigO{\epsH^{-1/2}} \bigO{\epsH^{-3}}.
	\end{align*}
	
\end{proof}

Clearly, if $ \epsH^{2} = \epsg = \varepsilon $, the operation complexity of \cref{alg:NewtonMR_2nd}} to  achieve \cref{eq:termination_second_order} is $ \bigO{\epsH^{-7/2}} $, which matches those of alternative algorithms with similar state-of-the-art guarantees. Finally, without \cref{assmpt:prop}, or in small dimensional problems, the operation complexity of \cref{alg:NewtonMR_2nd} to obtain \cref{eq:termination_second_order} is at most $ \dn \max \left\{\bigO{\epsg^{-3/2}}, \bigO{\epsH^{-3}}\right\} $.

\paragraph{Benign Saddle Regions.} The bound given in \cref{eq:complexity_NC_Lanczos} is obtained directly from the complexity result of the randomized Lanczos method \cite{kuczynski1992estimating} without any additional structural assumption on $ f $ other than \cref{assmpt:Lg}. It turns out that as long as the regions near the saddle points of $ f $ exhibit sufficiently large negative curvature, one can obtain an improved operation complexity as compared with \cref{thm:complexity_second_Hv}. 
\begin{assumption}[Benign Saddle Property]
	\label{assmp:saddle}
	The function $ f $ has the $ (\iota, \mu, \varsigma) $-benign saddle property, i.e., there exists $ \iota > 0$, $\mu > 0$,  and $0 \leq \varsigma < \mu$, such that for any point $ \xx \in \real^{d} $, at least one of the following holds:
	\begin{enumerate}[label = {\bfseries (\roman*)}]
		\item \label{eq:saddle:grad} $ \| \nabla f(\xx) \| \geq \iota $, 
		\item \label{eq:saddle:strict} $ \lambda_{\min}(\nabla^2 f(\xx)) \leq - \mu $, or
		\item \label{eq:saddle:convex} $ \lambda_{\min}(\nabla^2 f(\xx)) \geq - \varsigma $. 
	\end{enumerate}
\end{assumption}
Clearly, if we allow $\varsigma = \mu$, then \cref{assmp:saddle} would be trivially satisfied by all twice differentiable functions. Non-triviality of \cref{assmp:saddle} lies in the discrepancy between $\varsigma$ and $\mu$. Coupled with Hessian continuity from \cref{assmpt:lipschitz}, the optimization landscape of functions satisfying \cref{assmp:saddle} is in essence structured such that going from the vicinity of saddle points to near local minima entails navigating steep regions with large enough gradients. 

It turns out that   
\cref{assmp:saddle} is in fact a relaxation of the strict saddle property, which has become a standard assumption in analyzing non-convex optimization algorithms that can escape saddle points, e.g., \cite{ge2015escaping,sun2015nonconvex,reich2010nonlinear,lee2016gradient,panageas2017gradient,panageas2019first,lee2019first,achour2021global}. Recall that a function is said to satisfy the $ (\iota, \mu, \vartheta, \Delta) $-strict saddle property, if for any $ \xx  \in \real^{d}$, we have either $ \| f(\xx) \| \geq \iota > 0 $, $ \lambda_{\min}(\nabla^2 f(\xx)) \leq - \mu < 0 $, or there is a local minimum $ \xxs $ such that $ \| \xx - \xxs \| \leq \Delta $ and $ \lambda_{\min}(\nabla^2 f(\xx)) \geq \vartheta > 0 $ in the neighborhood $ \| \xx - \xxs \| \leq 2 \Delta $. It has been shown that many interesting machine learning problems satisfy the strict saddle property, e.g., online tensor decomposition \cite{sun2015nonconvex}, dictionary recovery problems \cite{sun2016complete}, the (generalized) phase retrieval problems \cite{sun2018geometric}, and the phase synchronization and community detection problems \cite{boumal2016nonconvex,bandeira2016low}. It is easy to see that $ (\iota, \mu, \vartheta, \Delta) $-strict saddle property implies $ (\iota, \mu, 0) $-benign saddle property. In this light, there are many more functions that enjoy $ (\iota, \mu, 0) $-benign saddle property than those with a  $ (\iota, \mu, \vartheta, \Delta) $-strict saddle characteristic. 

Leveraging the benign saddle assumption, we will apply \cref{cor:MINRES_complexity_NC} to obtain an alternative bound to $ \TNL $ in \cref{eq:complexity_NC_Lanczos}. For this, we need to find an estimate on the projection of $\tgg$ on a given eigenspace of $\tHH$, i.e., $\nu_{j} $ as in \cref{eq:nuj}. Fortunately, the particular choice for generating $ \tgg $ allows us to do just that. Indeed, suppose $ d \geq 3 $ and let $ \tgg $ be randomly generated from a uniform distribution on the unit sphere, i.e., $ \tgg = [\tg_1, \tg_2, \ldots, \tgd]^{\T} / \| \tgg \| $, where $ \| \tgg \|^{2} = {\sum_{i=1}^{d} \tg_{i}^2} $ and $ \tg_{i} \overset{iid}{\sim} \mathcal{N}(0,1) $. Consider $ \tnu = \left(\dotprod{\tgg,\uu}\right)^{2} $ where $ \uu $ is any unit eigenvector of $ \tHH $. By the spherical symmetry, the distribution of this dot product is the same that of $\left(\dotprod{\tgg,\ee_{1}}\right)^{2}$, where $ \ee_{1} $ is the first column of the identity matrix, so we consider $ \tnu = \left(\dotprod{\ee_{1}, \tgg}\right)^{2} = \tg^{2}_{1} / \| \tgg \|^{2} $. Recall that $ \tnu \sim \mathcal{B}(1/2, {(d-1)}/{2}) $, where $ \mathcal{B} $ denotes the beta distribution. 
Hence, for any $ 0 < \nu < 1 $, we obtain
\begin{align*}
	\delta  \defeq \Pr(\tnu \leq \nu) =  \int_{0}^{\nu} c(d) t^{-1/2}(1 - t)^{(d-3)/2} d t \leq 2 c(d) \sqrt{\nu},
\end{align*}
where
\begin{align}
	\label{eq:gamma_fun}
	c(d) \triangleq \frac{\Gamma({d}/{2})}{\sqrt{\pi}\Gamma({(d-1)}/{2})},
\end{align}
and $ \Gamma $ is the Gamma function. So, it follows that, with probability $ 1-\delta $, we have
$ \tnu \geq \delta^{2}/\left(4 c^{2}(d)\right) $.

Putting this all together, consider any function with $ (\iota, \mu, \varsigma) $-benign saddle property. Letting $ 0 < \epsg \leq \iota $ and $ \varsigma < \epsH/2 <  \mu $, we can apply \cref{cor:MINRES_complexity_NC} with $ \tHH $ and $ \tgg $, to guarantee that, with probability $ 1-\delta $, in at most 
\begin{align}   
	\label{eq:complexity_NC_pert}
	\TP &\defeq \min \left\{\max\left\{ \left \lceil \left(\frac{1}{4}\sqrt{\frac{\Lg + \mu}{\mu - 0.5 \epsH}} \right)\log\left[\frac{4 (\Lg + \mu)}{(\mu - 0.5 \epsH)} \left(\frac{4 c^{2}(d)}{\delta^{2}} - 1\right)\right] +1\right \rceil, 3 \right\}, d \right\},
\end{align}
iterations, a NPC direction for $ \tHH $ is detected, where $ c(d) $ is as in \cref{eq:gamma_fun}. 
In other words, if a NPC direction is never detected in $ \TP $ iterations, then with probability $ 1 - \delta $, we have $ \HH \succeq -\varsigma$.
%
Finally, we obtain the following improved operation complexity for functions that enjoy benign saddle property, whose proof is almost identical to \cref{thm:complexity_second_Hv} and hence it omitted.
\begin{corollary}[Operation Complexity of \cref{alg:NewtonMR_2nd} for Function with Benign Saddle Regions]
	\label{cor:complexity_second_Hv_saddle}
	Suppose $ \dn $ is sufficiently large, and \cref{assmpt:Lg,assmpt:prop,assmpt:T_regularity,assmpt:lipschitz,assmpt:residual_NPC} hold. Further, suppose the function $ f $ also satisfied the $ (\mu, \iota, \varsigma) $-benign saddle property as in \cref{assmp:saddle}. Let $ 0 < \epsg \leq \iota $ and $ \varsigma < \epsH/2 < \mu $. After at most $\max \left\{\tbigO{\epsg^{-3/2}}, \tbigO{\epsH^{-3}}\right\}$ gradient and Hessian-vector product evaluations, \cref{alg:NewtonMR_2nd} finds a point $ \xx $ such that $ \vnorm{\gg(\xx)} \leq \epsg $ and $ \lambda_{\min}(\HH(\xx)) \geq - \varsigma $, with probability $ 1-\delta $.
\end{corollary}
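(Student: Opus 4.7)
The plan is to mimic the operation complexity argument of \cref{thm:complexity_second_Hv} essentially verbatim, replacing only the Lanczos-based bound $ \TNL $ from \cref{eq:complexity_NC_Lanczos} by the structure-aware bound $ \TP $ from \cref{eq:complexity_NC_pert} whenever we account for an invocation of MINRES on the perturbed Hessian $ \tHHk = \HHk + (\epsH/2)\eye $. First, one invokes \cref{thm:complexity_second_alg} to deduce that \cref{alg:NewtonMR_2nd} halts within at most $ K \in \bigO{\max\left\{\epsg^{-3/2},\epsH^{-3}\right\}}$ outer iterations, and the partition of iteration indices into $ \mathcal{K}_1,\mathcal{K}_2,\mathcal{K}_3 $ used in that proof transfers directly. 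Since each outer iteration calls MINRES exactly once, it suffices to bound the MINRES iteration count on each of the three sets.

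For outer iterations in $ \mathcal{K}_1 \cup \mathcal{K}_3 $, where $ \vnorm{\ggk} > \epsg $, \cref{assmpt:prop} lets us apply \cref{lemma:MINRES_complexity_Sol,cor:MINRES_complexity_NC} to $ (\HHk,\ggk) $ and deduce the uniform bound $ \TS \in \tbigO{1} $ from \cref{eq:complexity_Sol}, exactly as in the proof of \cref{thm:complexity_first_Hv_opt}. The key new ingredient enters on $ \mathcal{K}_2 $: for such an index $ k $ one has $ \vnorm{\ggk}\le \epsg \le \iota $, so case \labelcref{eq:saddle:grad} of \cref{assmp:saddle} is ruled out, leaving only the dichotomy $ \lambda_{\min}(\HHk) \le -\mu $ or $ \lambda_{\min}(\HHk) \ge -\varsigma $. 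The hypothesis $ \varsigma < \epsH/2 < \mu $ guarantees that the shift $ \tHHk = \HHk + (\epsH/2)\eye $ is either strictly indefinite (with $\lambda_{\min}(\tHHk) \le -(\mu-\epsH/2) < 0$) or strictly positive definite (with $\lambda_{\min}(\tHHk) \ge \epsH/2 - \varsigma > 0$). In the first case, a NPC direction must be returned by MINRES on $(\tHHk,\tgg)$; by \cref{cor:MINRES_complexity_NC} together with the standard lower tail bound $ \tnu \ge \delta^{2}/(4c^{2}(\dn)) $ on the projection of a uniform random unit vector onto any eigenspace of $ \tHHk $, this happens within $ \TP \in \tbigO{1} $ iterations with probability $ 1-\delta $, uniformly in $ \epsH $. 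In the second case, \cref{thm:T_indefinite} (applied to $ \tHHk $, whose $\tgg$-relevant spectrum almost surely coincides with its full spectrum) implies that no NPC direction is ever detected, so \cref{alg:NewtonMR_2nd} correctly terminates at $ \xxk $ with the guarantee $ \lambda_{\min}(\HHk) \ge -\varsigma $.

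Summing over the three sets, the total count of gradient and Hessian-vector products is bounded by
\[
\TS\bigl(\abs{\mathcal{K}_1}+\abs{\mathcal{K}_3}\bigr) + \TP \abs{\mathcal{K}_2}\in \tbigO{1}\cdot\bigO{\epsg^{-3/2}} + \tbigO{1}\cdot\bigO{\epsH^{-3}} = \max\bigl\{\tbigO{\epsg^{-3/2}},\tbigO{\epsH^{-3}}\bigr\}.
\]
To propagate the probabilistic guarantee uniformly across all $ K $ executions of \hyperref[alg:NewtonMR_2nd_Lanczos]{Step 13}, one sets the per-call failure probability to $ \delta/K $, which only enters $ \TP $ through a logarithm and is therefore absorbed into the $ \tbigO{\cdot} $. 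The only genuine subtlety of the argument is verifying that the benign saddle gap $ \varsigma < \epsH/2 < \mu $ produces the clean dichotomous behavior of $ \tHHk $ described above, so that the bound $ \TP $ from \cref{eq:complexity_NC_pert} may legitimately replace the generic Lanczos bound $ \TNL $; no new inequalities beyond those already established for \cref{alg:MINRES} are required.
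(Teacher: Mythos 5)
Your proof is correct and follows the same route the paper intends: since the paper explicitly notes that the proof of \cref{cor:complexity_second_Hv_saddle} is ``almost identical to \cref{thm:complexity_second_Hv}'' with $\TP$ from \cref{eq:complexity_NC_pert} replacing $\TNL$ from \cref{eq:complexity_NC_Lanczos}, your verbatim-mimic strategy is exactly what is meant. You also supply the one piece the paper leaves implicit — the verification that $0<\epsg\le\iota$ rules out case \labelcref{eq:saddle:grad} of \cref{assmp:saddle} on $\mathcal{K}_2$, and that $\varsigma<\epsH/2<\mu$ then forces the perturbed Hessian $\tHHk$ to be either strictly indefinite or strictly positive definite, so that $\TP\in\tbigO{1}$ genuinely applies or the algorithm correctly terminates with $\lambda_{\min}(\HHk)\ge-\varsigma$. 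The only small imprecision in your bookkeeping is writing $\TS(\abs{\mathcal{K}_1}+\abs{\mathcal{K}_3})\in\tbigO{1}\cdot\bigO{\epsg^{-3/2}}$; since $\abs{\mathcal{K}_3}\le\abs{\mathcal{K}_2}+1\in\bigO{\epsH^{-3}}$, this term is really $\tbigO{1}\bigl(\bigO{\epsg^{-3/2}}+\bigO{\epsH^{-3}}\bigr)$, but the final $\max\{\tbigO{\epsg^{-3/2}},\tbigO{\epsH^{-3}}\}$ is unaffected.
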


\section{Numerical Experiments}
\label{sec:exp}

In this section, we will evaluate the performance of \cref{alg:NewtonMR_1st} on several examples\footnote{In our implementations, we only aim to reach an approximate first-order optimal point, and leave a thorough numerical evaluation of \cref{alg:NewtonMR_2nd} to a follow up empirical work.}\!\!, namely, non-linear least squares (\cref{sec:nls}), deep auto-encoders (\cref{sec:ae}), and a series of problems from the \texttt{CUTEst} test collection (\cref{sec:cute}). We compare \cref{alg:NewtonMR_1st} with the following alternative Newton-type methods. 
\begin{itemize}[label = -]
	\item \texttt{L-BFGS} \cite[Algorithm 7.5]{nocedal2006numerical}. A limit memory quasi-Newton method with strong Wolfe line-search \cite[Algorithm 3.5]{nocedal2006numerical}. 
	\item \texttt{Newton-CR} \cite[Algorithm 3]{dahito2019conjugate}. A line search Newton-CR method  with strong Wolfe line-search \cite[Algorithm 3.5]{nocedal2006numerical}.
	\item \texttt{Newton-CG-LS} \cite[Algorithm 3]{royer2020newton}. A line search Newton-CG method with small Hessian perturbations. 
	\item \texttt{Newton-CG-LS-FW}. This is identical to \texttt{Newton-CG-LS}, except that, when negative curvature directions are encountered, we incorporate  forward/backward tracking line-search, \cref{alg:line_search_forward}, within the framework of \cite[Algorithm 3]{royer2020newton}. This is mainly to create as much of a level playing field as possible among various methods, in particular for Newton-CG variants. We also note that this change is in fact consistent with the theoretical analysis of \cite{royer2020newton} and does not raise any theoretical concerns.
	\item \texttt{Newton-CG-TR-Steihaug} \cite[Algorithm 4.1]{nocedal2006numerical}. A trust-region method with CG-Steihaug sub-problem solver. 
	\item \texttt{Newton-CG-TR} \cite[Algorithm 4.1]{curtis2021trust}. A trust-region method based on Capped-CG algorithm and small Hessian perturbations.
\end{itemize}

We set the maximum number of iterations and the respective inexactness tolerance for all sub-problems solvers, i.e., CG/CR/MINRES, to be, respectively, $1,000$ and $ 0.1 $. For trust-region methods, the radius is enlarged by a factor of $ 3 $ when the reduction in the objective function is larger than $ 20\% $ of what is predicted by the underlying quadratic model. Otherwise, the radius is cut in half. The initial and the  maximum trust region radii are, respectively, chosen to be $ 1 $, and $ 10^{10} $.
For line-search methods, the line-search algorithm is initialized with the unit step-size, the Armijo parameter is set to $ 10^{-4} $, and the parameter for the Wolfe's curvature condition \cite{nocedal2006numerical} is $ 0.1 $. Also, the maximum iterations of the line-search is set to $ 1,000 $. We terminate the optimization algorithms if the norm of gradient falls below $ \epsg = 10^{-10} $, which signifies a successful termination. The algorithms fail to converge if they are terminated prematurely, i.e., if the total number of oracle calls exceeds $10^{5}$, or if the step-size/trust region radius shrink to less than $ 10^{-18} $.

For experiments of \cref{sec:ae,sec:nls}, we demonstrate the performance of the algorithms as measured by the objective value and gradient norm in light of the total number of calls to the function oracle (or equivalent operations) \cite{roosta2018newton} as well as the ``wall-clock'' time. In \cref{sec:cute}, however, to empirically evaluate various algorithms on CUTEst problem sets, we plot the performance profiles \cite{dolan2002benchmarking,gould2016note} for objective value and gradient norm metrics.

\begin{figure}[htb]
	\centering
	\includegraphics[width=.95\textwidth]{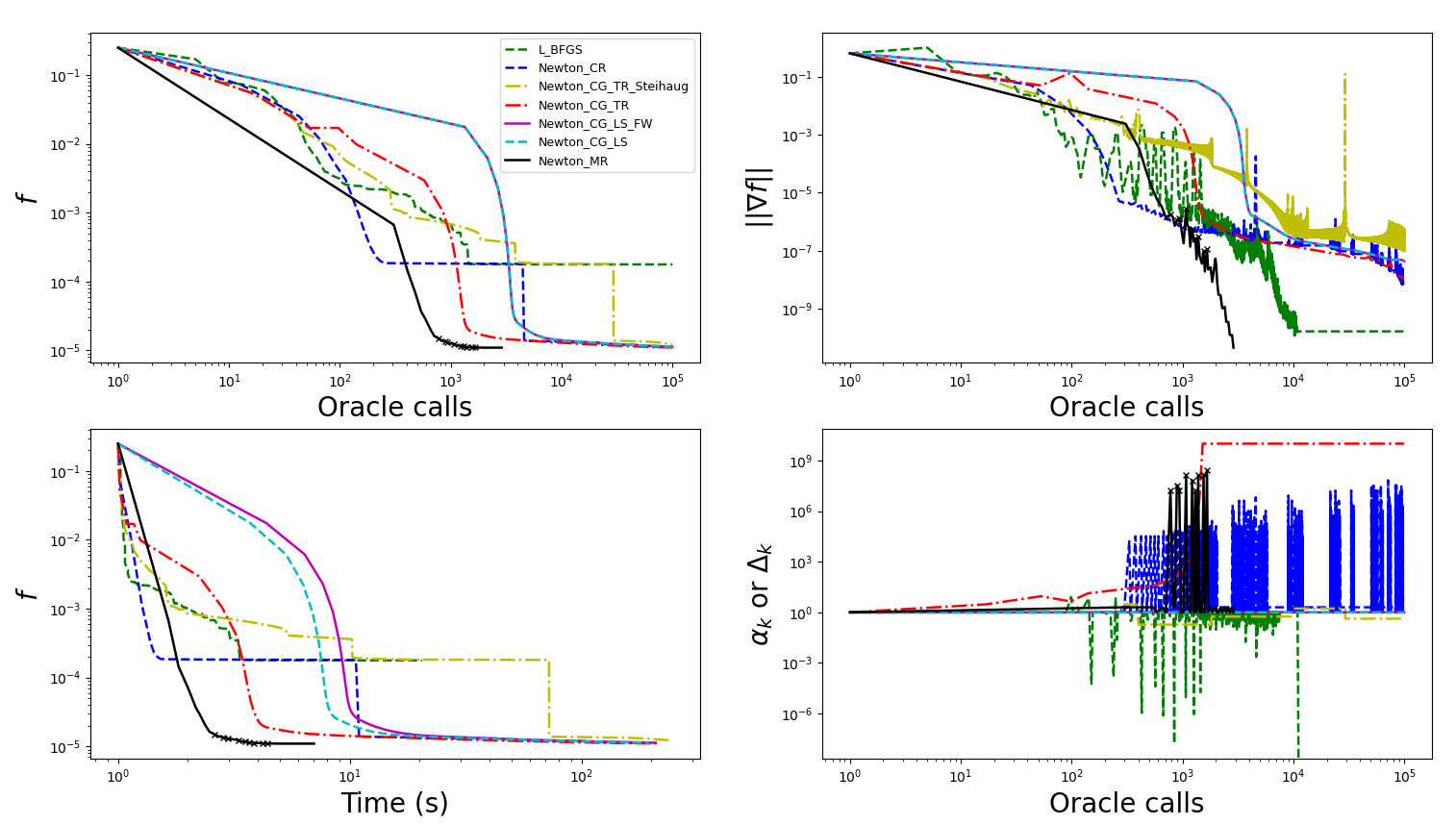}
	\caption{The performance of various algorithms on the model problem of \cref{sec:nls} using the \texttt{Gisette} dataset \cite{guyon2004result} ($ n=6,000 $ and $ d = 5,000 $). The special marks on the plots signify the iterations where a negative/non-positive curvature direction is detected and subsequently used within the respective algorithms. An ``oracle call'' refers to an operation that is equivalent, in terms of complexity, to a single function evaluation. Time is measured in seconds. \texttt{Newton-MR} achieves a solution faster than all other methods. Also, note that, for this problem, \texttt{Newton-MR} is the only method that leverages the NPC directions when they arise. \label{fig:gisette}}
\end{figure}

\begin{figure}[htb]
	\centering
	\includegraphics[width=.95\textwidth]{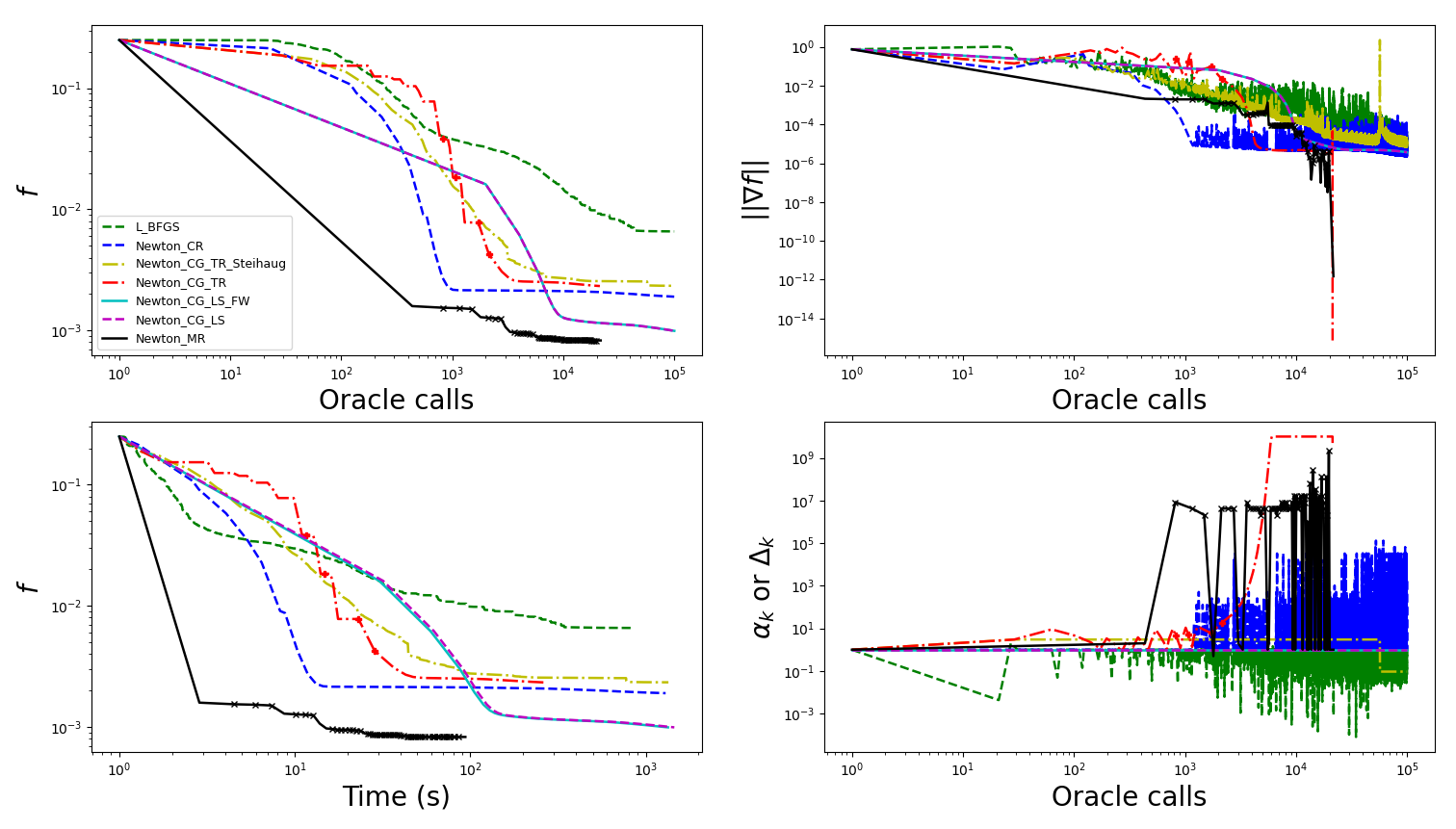}
	\caption{The performance of various algorithms on the model problem of \cref{sec:nls} using the \texttt{STL10} dataset \cite{coates2011analysis} ($ n=5,000 $ and $ d = 27,648$). The special marks on the plots signify the iterations where a negative/non-positive curvature direction is detected and subsequently used within the respective algorithms. An ``oracle call'' refers to an operation that is equivalent, in terms of complexity, to a single function evaluation. Time is measured in seconds. One can clearly see the superior performance of \texttt{Newton-MR} in achieving a better quality solution in less time/computational efforts. \label{fig:stl10}}
\end{figure}

\subsection{Non-linear Least-square Problem}
\label{sec:nls}
We first consider a regularized non-linear least squares problem,
\begin{align*}
	f(\xx) = \frac{1}{n}\sum_{i=1}^{n} \left( b_i - \frac{1}{\displaystyle 1+e^{-\dotprod{\aa_i, \xx}}} \right)^2 + \lambda \psi(\xx),
\end{align*}
where $\{(\aa_i, b_i)\}_{i=1}^n \subset \bbR^d \times \{0, 1\}$, and $ \psi(\xx) = \sum_{i=1}^{d} x_i^2/(1+x_i^2) $ is a non-convex regularization. We run the experiments on two datasets, namely \texttt{Gisette} and \texttt{STL10}\footnote{The \texttt{STL10} dataset contains colored images in ten classes. We relabel the even classes as ``$ 0 $'' and the odd one as ``$ 1 $''.}, and set $ \lambda $ to $ 10^{-7} $ and $ 10^{-6} $, respectively. All algorithms are initialized from the same instance drawn randomly from the standard normal distribution.

In \cref{fig:gisette}, we see that \texttt{Newton-MR} terminates successfully much faster than all other algorithms and it does so by achieving the lowest objective value. This might be related to the observation that, among all methods, \texttt{Newton-MR} is the only one method that leverages the NPC directions when they arise (emphasized by special marks on the plot). When NPC directions are not encountered, \texttt{Newton-CG-LS-FW} and \texttt{Newton-CG-LS} perform almost identically. \texttt{Newton-CR}, \texttt{L-BFGS}, and \texttt{Newton-CG-TR-Steihaug} stagnate around a saddle point for a long time. Eventually, \texttt{Newton-CR} and \texttt{Newton-CG-TR-Steihaug} escape the saddle region, while \texttt{L-BFGS} fails to make sufficient progress. The superior performance of \texttt{Newton-MR} in  \cref{fig:stl10} is even more pronounced. In fact, no other algorithm could obtain a similar quality solution in the same amount of time and computational efforts. Note that using a NPC direction, the step-size returned from forward/backward tracking line-search strategy \cref{alg:line_search_forward} within \texttt{Newton-MR} can reach $ 10^{7} $ to provide sufficient decrease in $ f $. Similar performance can also be seen in the following experiment.

\begin{figure}[htb]
	\centering
	\includegraphics[width=.95\textwidth]{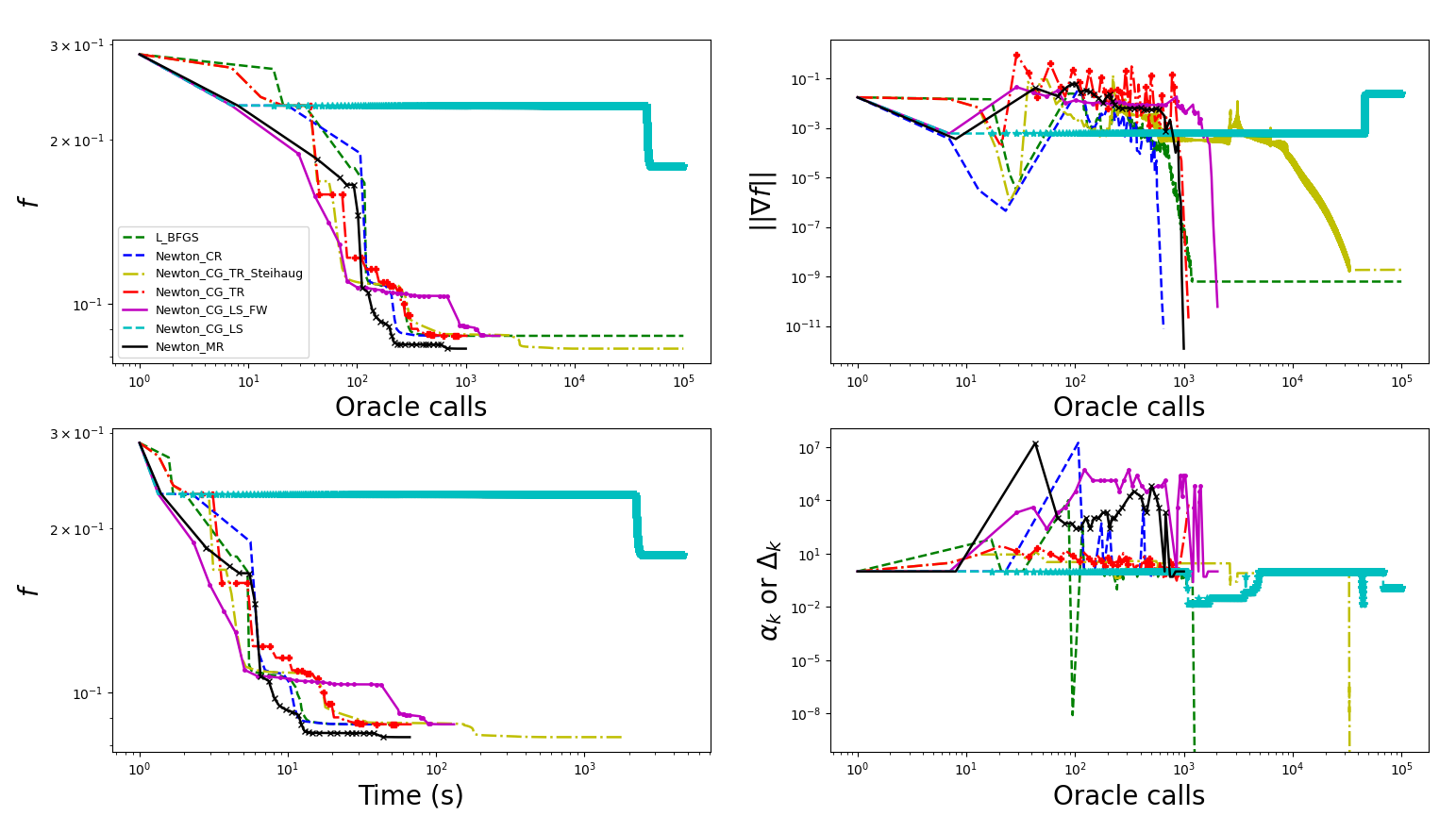}
	\caption{Deep auto-encoder with \texttt{CIFAR10} dataset \cite{krizhevsky2009learning}. The special marks on the plots signify the iterations where a negative/non-positive curvature direction is detected and subsequently used within the respective algorithms. An ``oracle call'' refers to an operation that is equivalent, in terms of complexity, to a single function evaluation. Time is measured in seconds. \texttt{Newton-MR} clearly outperforms all other methods. Notable is the poor performance of \texttt{Newton-CG-LS}, which is hindered by encountering NPC directions too often and yet not employing a forward tracking strategy.   \label{fig:CIFAR10}}
\end{figure}

\begin{figure}[htb]
	\centering
	\includegraphics[width=.95\textwidth]{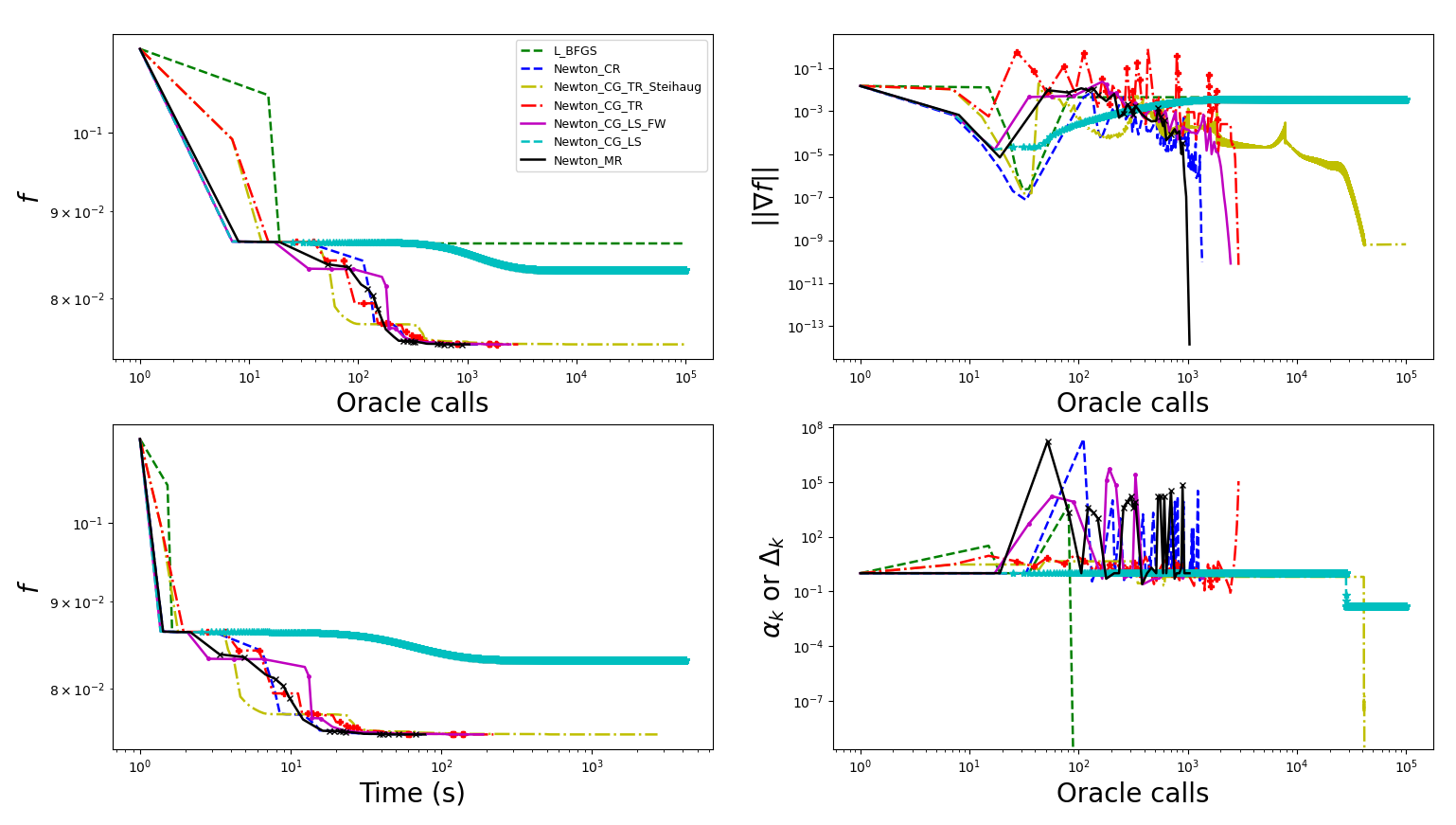}
	\caption{Deep auto-encoder with  \texttt{MNIST} dataset \cite{lecun1998gradient}.  The special marks on the plots signify the iterations where a negative/non-positive curvature direction is detected and subsequently used within the respective algorithms. An ``oracle call'' refers to an operation that is equivalent, in terms of complexity, to a single function evaluation. Time is measured in seconds.  The \texttt{MNIST} dataset gives rise to an arguably much simpler problem and hence we see that, with the exception of  \texttt{Newton-CG-LS}, all methods perform similarly. \label{fig:MNIST}}
\end{figure}

\subsection{Auto-encoder}
\label{sec:ae}
We now consider a non-convex deep auto-encoder problem as
\begin{align*}
	f(\xx_{\mathcal{E}},\xx_{\mathcal{D}}) = \frac{1}{n}\sum_{i=1}^{n} \vnorm{ \aa_i - \mathcal{D}(\mathcal{E}(\aa_{i}; \xx_{\mathcal{E}}); \xx_{\mathcal{D}}) }^{2} + \lambda \psi(\xx_{\mathcal{E}},\xx_{\mathcal{D}}),
\end{align*}
where the structures of the encoder $ \mathcal{E}: \real^{p} \to \real^{q}$ and the decoder $ \mathcal{D}: \real^{q} \to \real^{p}$ are as described in \cite{xuNonconvexEmpirical2017,martens2010deep,hinton2006reducing}. Here $ \xx \defeq [\xx^{\T}_{\mathcal{E}};\xx^{\T}_{\mathcal{E}}]^{\T} \in \real^{d} $. We add the same non-convex regularization function $ \psi  $ as in \cref{sec:nls} and set the regularization parameter to $ \lambda = 10^{-3} $. Unlike \cite{hinton2006reducing}, we use ``Tanh'' for the non-linear activation function, and our encoder-decoder structure is entirely symmetric, i.e., we do not apply an extra nonlinear activator at the end of the decoder network.  We run the experiments on two popular machine learning datasets, namely \texttt{CIFAR10} and \texttt{MNIST}; see \cref{table:para}. The algorithms are initialized from the same point, which is chosen randomly near the origin, i.e., the $ d $ components of the starting point are drawn independently from a normal distribution with standard deviation $ 10^{-8} $. The results are depicted in \cref{fig:CIFAR10,fig:MNIST}. 

In both experiments, \texttt{Newton-CG-LS} performs very poorly, which is mainly due to encountering NPC directions too often and yet not employing a forward tracking strategy. At the same time, the \texttt{Newton-CG-LS-FW} variant, which employs forward/backward tracking line-search strategy \cref{alg:line_search_forward}, shows significantly improved performance. In \cref{fig:CIFAR10}, among all methods, \texttt{Newton-CG-TR-Steihaug} and \texttt{Newton-MR} converge to the most optimal solution, albeit the \texttt{Newton-CG-TR-Steihaug} method converges relatively slower and the norm of the gradient never reaches the preset threshold. 
With the \texttt{MNIST} dataset \cref{fig:MNIST}, which arguably gives rise to a much easier problem than that using the \texttt{CIFAR10} dataset \cref{fig:CIFAR10}, with the exception of \texttt{Newton-CG-LS} and \texttt{L-BFGS}, all other methods converge to a similar solution within a comparable amount of time and computational effort. However, \texttt{Newton-MR} arrives at a point with a much smaller gradient.

\begin{table}[htb]
	\begin{center}
		\scalebox{1}{
			\begin{tabular}{|c|c|c|c|}
				\hline
				Dataset & n & d & Encoder network architecture \\ 
				\hline &&& \\ [-2ex]
				\texttt{CIFAR10} & 50,000 & 1,664,232 & 3,072-256-128-64-32-16-8 \\ [1ex]
				\hline &&& \\ [-2ex]
				\texttt{MNIST} & 60,000 & 1,154,784 & 784-512-256-128-64-32-16\\ [1ex]
				\hline
		\end{tabular}}
	\end{center}
	\caption{\small Auto-encoder model for \texttt{CIFAR10} and \texttt{MNIST} datasets. \label{table:para}}
\end{table}

\subsection{CUTEst test problems}
\label{sec:cute}

We now focus our efforts in evaluating the performance of the algorithms on a series of test problems from the  \texttt{CUTEst} test collection \cite{gould2015cutest}. In particular, we focused on the unconstrained problems, excluding the problems whose objective function is constant, linear, undefined, or unbounded below. This amounted to a test set of $ 237 $ problems. All algorithms are initialized by the same instance drawn randomly from the uniform distribution on the unit sphere. \cref{fig:CUTEST} depicts the performance profile \cite{dolan2002benchmarking,gould2016note} of various methods as measured by objective value and gradient norm. We found that the default maximum oracle call of $10^5$ was adequate to allow all methods to achieve their best possible outcome. To highlight the performance of the methods more clearly, in \cref{fig:CUTEST_cut}, the horizontal axis is capped at $ \tau = 100 $.

\begin{figure}[!htb]
	\centering
	\subfigure[Performance profile in terms of $ f(\xxk) $]
	{\includegraphics[width=.45\textwidth]{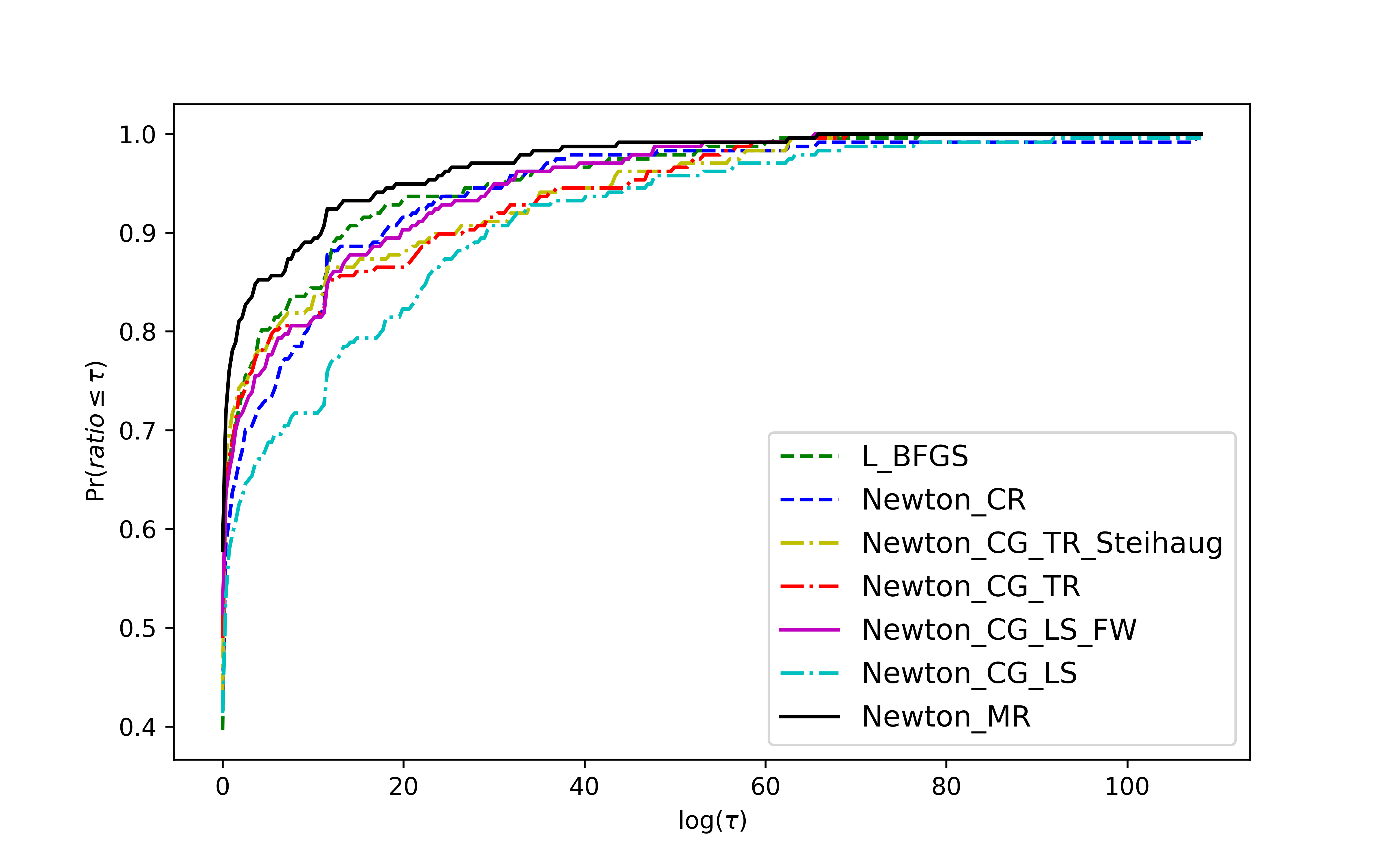}}
	\subfigure[Performance profile in terms of $ \vnorm{\nabla f(\xxk)} $]
	{\includegraphics[width=.45\textwidth]{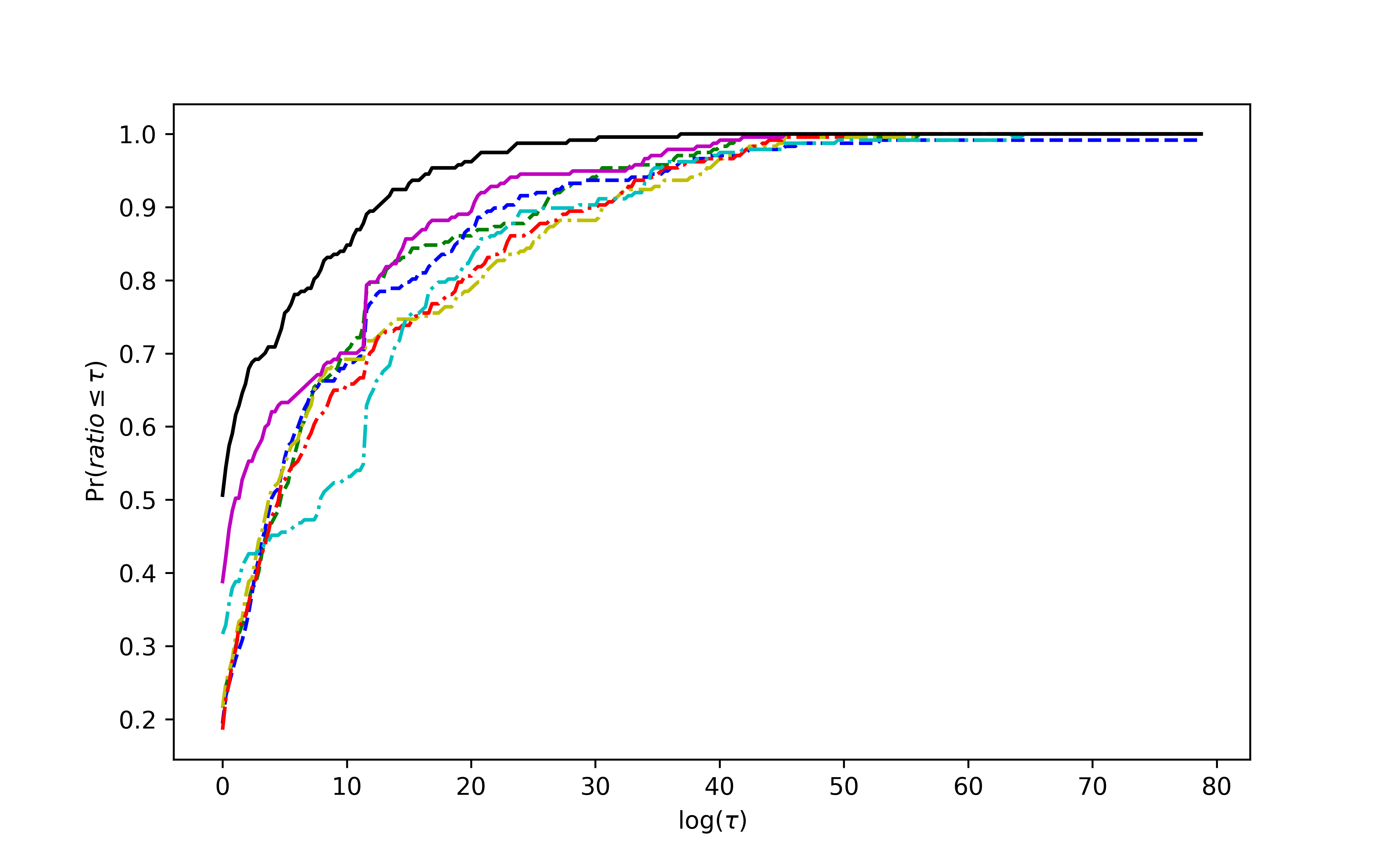}}
	\caption{The performance profile of various Newton-type methods on $ 237 $  \texttt{CUTEst} problems. For
		a given $ \tau $ in the x-axis, the corresponding value on the y-axis is the proportion of times that a given solver's performance lies within a factor $ \tau $ of the best possible performance over all runs. \label{fig:CUTEST}}
\end{figure}

From \cref{fig:CUTEST,fig:CUTEST_cut}, clearly \texttt{Newton-MR} outperforms all other algorithms both in terms of obtaining the lowest objective value and the smallest gradient norm. What is somewhat striking, however, is the relatively consistent poor performance of \texttt{Newton-CG-LS}. Of course, incorporating forward/backward tracking line-search in \texttt{Newton-CG-LS-FW} has helped improve the performance. Nonetheless, on these experiments, the advantages of employing MINRES as sub-problem solver as opposed to alternatives such as CG or CR is evidently clear.

\begin{figure}[!htb]
	\centering
	\subfigure[Performance profile in terms of $ f(\xxk) $]
	{\includegraphics[width=.45\textwidth]{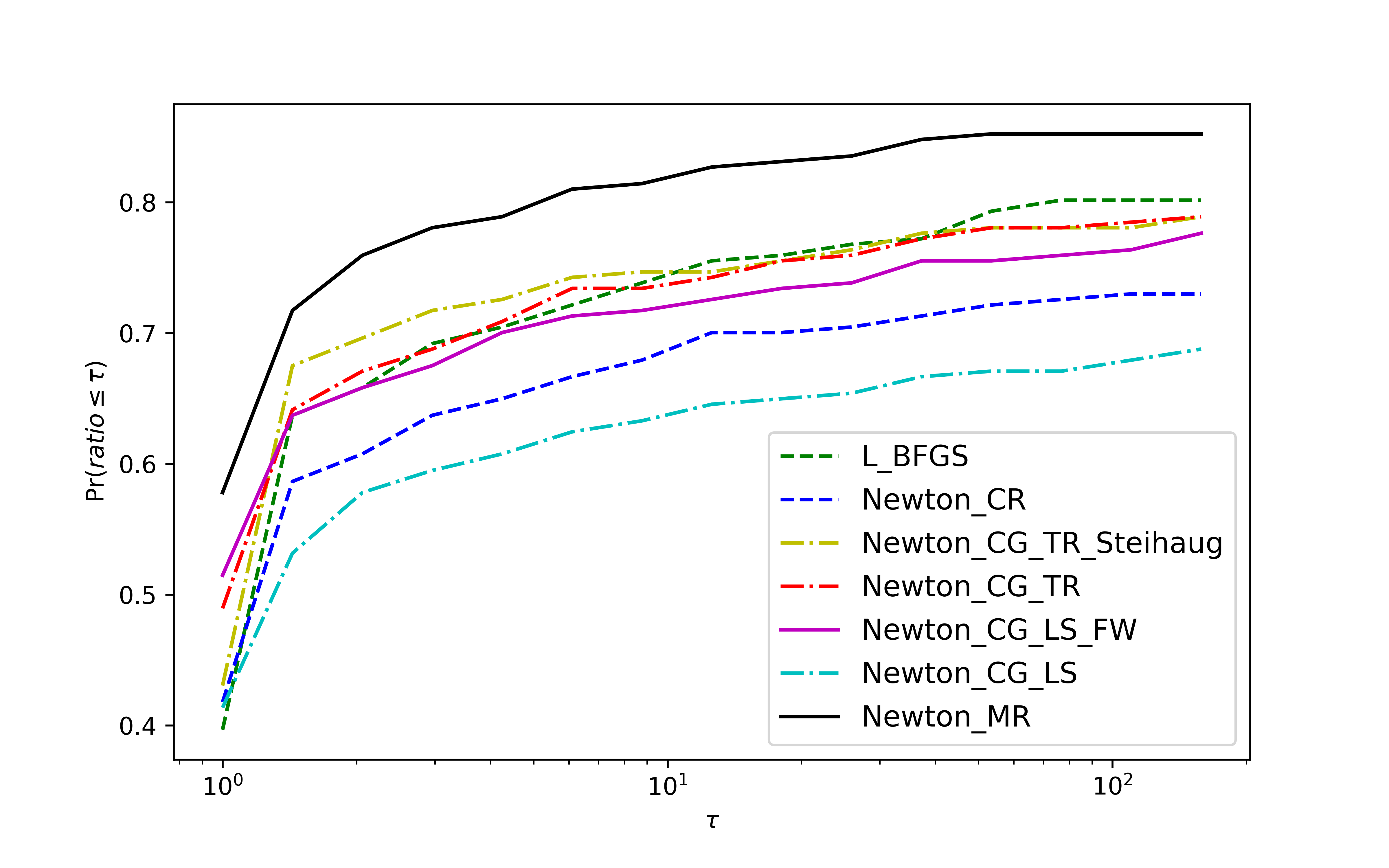}}
	\subfigure[Performance profile in terms of $ \vnorm{\nabla f(\xxk)} $]
	{\includegraphics[width=.45\textwidth]{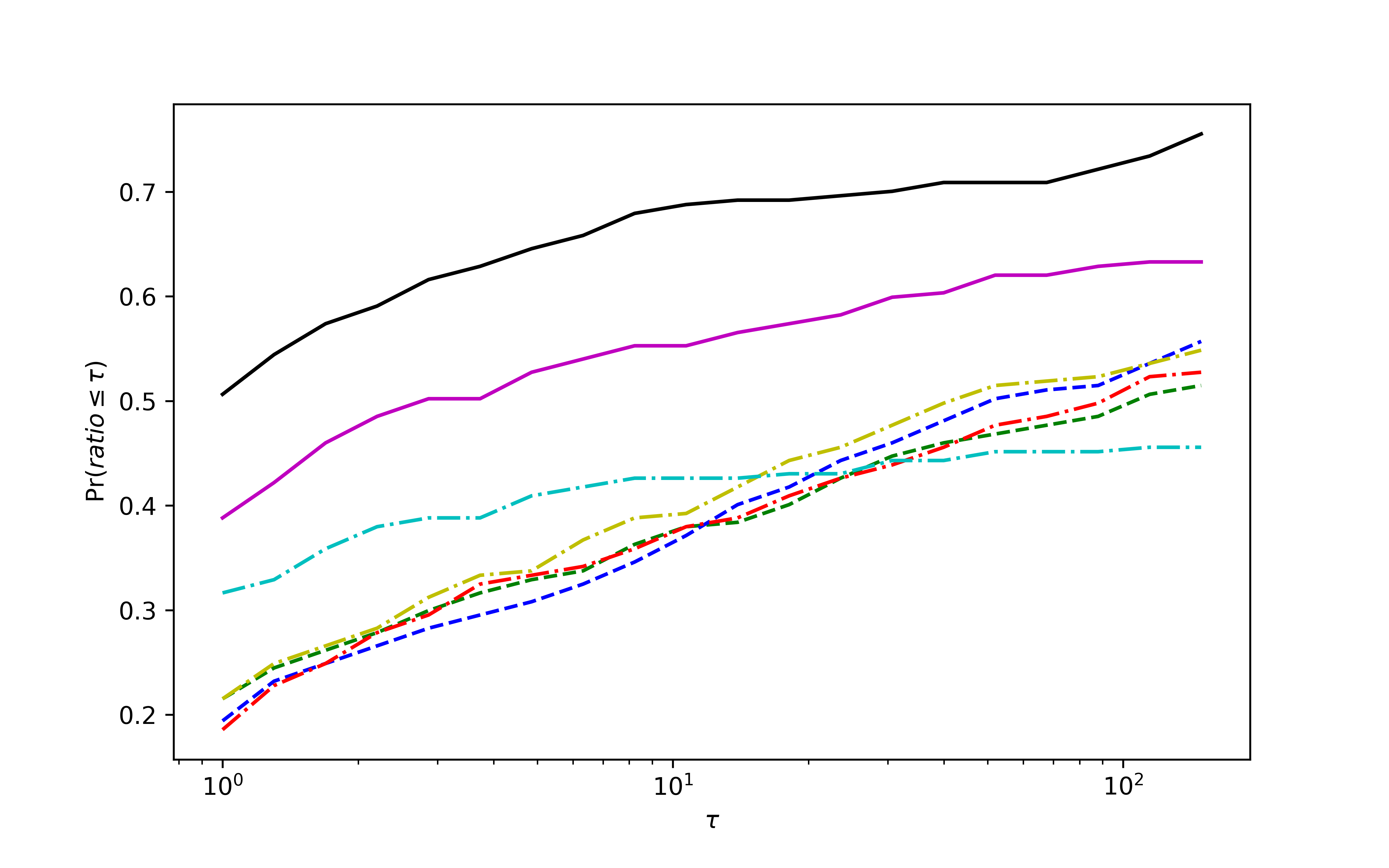}}
	\caption{The performance profile of various Newton-type methods on $ 237 $  \texttt{CUTEst} problems. For
		a given $ \tau $ in the x-axis, the corresponding value on the y-axis is the proportion of times that a given solver's performance lies within a factor $ \tau $ of the best possible performance over all runs. To distinguish the performance of the methods more clearly, $ \tau $ is capped at $100$. \label{fig:CUTEST_cut}}
\end{figure}

\section{Conclusions}
\label{sec:conc}
Building on recent results regarding various properties of MINRES \cite{liu2022minres}, we extended the Newton-MR algorithm, initially proposed in \cite{roosta2018newton} and limited to invex optimization problems, to more general non-convex settings. This is done by leveraging non-positive curvature directions, when they arise, as part of MINRES iterations. We established complexity guarantees for convergence to first and second-order approximate optimality, which are known to be optimal. To achieve this, we provided a novel convergence analysis for MINRES, which improves upon the existing bounds in terms of dependence on the spectrum for indefinite matrices. Furthermore, under the benign saddle property, a novel assumption which is weaker than the widely used strict saddle property, we were able to greatly improve the second-order complexity guarantee and obtain a rate that, to our knowledge, is the state-of-the-art. In contrast to similar alternative methods where CG iterations are greatly modified to extract NPC directions, our algorithms are simple in that the NPC directions, if they exist, arise naturally within MINRES iterations without additional algorithmic modifications. We also demonstrated the superior performance of our algorithm, as compared with several alternative Newton-type methods, on several non-convex problems.

\appendix 
\section{MINRES: Review and Further Details}
\label{sec:MINRES_review}

In this section, for the sake of completeness, we review MINRES in some details and highlight the useful theoretical results that are relevant to the analysis of this paper. In doing so, we follow the presentation of \cite{liu2022minres} almost verbatim, however, we adjust the notation to match our setting here. For more details on MINRES and its various theoretical properties, see \cite{liu2022minres} and references therein. 
In this section, for notational simplicity, we drop the dependence on MINRES iterations $ \sskt $ on the outer iterate $ \xxk$, i.e., we use $ \sst $ instead.

\subsection{Algorithmic Details}
\label{sec:MINRES_details}
Recall MINRES, depicted in \cref{alg:MINRES}, is a method using Krylov subspace methods to solve the symmetric linear least-squares problem \cref{eq:MINRES}. MINRES involves three major ingredients: Lanczos process, QR decomposition, and the update of its iterates.

\subsubsection*{Lanczos process}
With $\vv_{1} = \gg/\vnorm{\gg}$, recall that after $ t $ iterations of the Lanczos process, and in the absence of round-off errors, the Lanczos vectors form an orthogonal matrix $ \VV_{t+1} = [ \vv_1 \mid \vv_2 \mid \dots  \mid \vv_{t+1} ] \in \real^{\dn \times (t+1)} $, whose columns span $ \Klv_{t+1}(\HH,\gg) $ and satisfy the familiar relation $ \HH \VV_{t} = \VV_{t+1} \bTT_{t} $, where $ \bTT_{t} \in \real^{(t+1) \times t} $ is an upper-Hessenberg matrix of the form
\begin{align}
	\label{eq:tridiagonal_T}
	\bTT_{t} &= 
	\begin{bmatrix}
		\talpha_1 & \tbeta_2 & & & \\
		\tbeta_2 & \talpha_2 & \tbeta_3 & & \\
		& \tbeta_3 & \talpha_3 & \ddots & \\
		& & \ddots & \ddots & \tbeta_{t} \\
		& & & \tbeta_{t} & \talpha_{t} \\
		\hdashline
		& & & & \tbeta_{t+1} \\
	\end{bmatrix}
	\triangleq
	\begin{bmatrix}
		\TT_{t} \\
		\tbeta_{t+1}\ee^\TT_{t}
	\end{bmatrix}.
\end{align}
Subsequently, we get the three-term recursion
\begin{align*}
	\HH \vv_{t} = \tbeta_{t} \vv_{t-1} + \talpha_{t} \vv_{t} + \tbeta_{t+1} \vv_{t+1}, \quad t \geq 2,
\end{align*}
and the Lanczos process is terminated when $ \tbeta_{t+1} = 0 $. 

Noting that $ \mathcal{K}_{t}(\HH, \gg) = \range(\VV_{t}) = \Span\{\vv_1, \dots, \vv_{t}\} $, we let $ \ss_{t} = \VV_{t}\yy_{t} $ for some $ \yy_{t} \in \real^{t} $, and it follows that the residual $ \rr_{t} $ can be written as 
\begin{align*}
	\rr_{t} = - \gg - \HH \ss_{t} = - \gg - \HH \VV_{t} \yy_{t} = - \gg - \VV_{t+1} \bTT_{t} \yy_{t} = - \VV_{t+1}(\vnorm{\gg} \ee_1 + \bTT_{t} \yy_{t}).
\end{align*}
This gives rise to the well-known sub-problems of MINRES as
\begin{align}
	\label{eq:MINRES_sub}
	\min_{\yy_{t} \in \real^{t}} \vnorm{\tbeta_1 \ee_1 + \bTT_{t} \yy_{t}}, \quad \tbeta_{1} = \vnorm{\gg}.
\end{align}

\subsubsection*{QR decomposition}
\label{sec:QR}
Recall that \cref{eq:MINRES_sub} is solved using the QR factorization of $ \bTT_{t} $. Let $ \QQ_{t} \bTT_{t} = \bRR_{t} $ be the full QR decomposition of $ \bTT_{t} $ where $ \QQ_{t} \in \real^{(t+1) \times (t+1)} $ and $ \bRR_{t} \in \real^{(t+1) \times t} $. Typically, $\QQ_{t}$ is formed, implicitly, by the application of series of Householder reflections to transform $ \bTT_{t} $ to the upper-triangular matrix $ \bRR_{t} $. Recall that each Householder reflection only affects two rows of the matrix that is being triangularized. More specifically, two successive application of Householder reflections can be compactly written by only considering the elements of the matrix that are being affected as 
\begin{align*}
	\begin{bmatrix}
		1 & 0 & 0 \\
		0 & c_{i-1} & s_{i-1} \\
		0 & s_{i-1} & -c_{i-1}
	\end{bmatrix} \begin{bmatrix}
		c_{i-2} & s_{i-2} & 0\\
		s_{i-2} & -c_{i-2} & 0 \\
		0 & 0 & 1 
	\end{bmatrix} 
	\begin{bmatrix}
		\gamma_{i-2} & \delta_{i-1} & 0 & 0 \\
		\tbeta_{i-1} & \talpha_{i-1} & \tbeta_{i} & 0 \\
		0 & \tbeta_{i} & \talpha_{i} & \tbeta_{i+1}
	\end{bmatrix}&
	\nonumber \\ 
	= \begin{bmatrix}
		1 & 0 & 0 \\
		0 & c_{i-1} & s_{i-1} \\
		0 & s_{i-1} & -c_{i-1}
	\end{bmatrix} \begin{bmatrix}
		\gamma_{i-2}^{[2]} & \delta_{i-1}^{[2]} & \epsilon_{i} & 0 \\
		0 & \gamma_{i-1} & \delta_{i} & 0 \\
		0 & \tbeta_{i} & \talpha_{i} & \tbeta_{i+1} \\
	\end{bmatrix}& \nonumber \\ 
	= \begin{bmatrix}
		\gamma_{i-2}^{[2]} & \delta_{i-1}^{[2]} & \epsilon_{i} & 0 \\
		0 & \gamma_{i-1}^{[2]} & \delta_{i}^{[2]} & \epsilon_{i+1} \\
		0 & 0 & \gamma_{i} & \delta_{i+1} \\
	\end{bmatrix}&,
\end{align*}
where $ 3 \leq i \leq t-1 $ and 
\begin{align}
	\label{eq:c_s_r2}
	c_{j}   = \frac{\gamma_{j}}{\gamma_{j}^{[2]}}, \quad s_{j} = \frac{\tbeta_{j+1}}{\gamma_{j}^{[2]}}, \quad \gamma_{j}^{[2]} = \sqrt{(\gamma_{j})^2 + \tbeta_{j+1}^2} = c_{j} \gamma_{j} + s_{j} \tbeta_{j+1}, \quad \quad 1 \leq j \leq t.
\end{align}
Here, the $ 2 \times 2 $ sub-matrix made of $c_{j}$ and $s_{j}$ is the special case of a Householder reflector in dimension two \cite[p.\ 76]{trefethen1997numerical}.

Consequently, we can rewrite $\QQ_{t}$ and $ \bRR_{t} $ in block form as 
\begin{subequations}
	\label{eq:T_QR_decomp}
	\begin{align}
		\QQ_{t} \bTT_{t} = \bRR_{t} \triangleq 
		\begin{bmatrix}
			\RR_{t}\\
			\zero^{\T}
		\end{bmatrix}, \quad 
		\RR_{t} =
		\begin{bmatrix}
			\gamma_1^{[2]} & \delta_2^{[2]} & \epsilon_3 & \\
			&\gamma_2^{[2]} & \delta_3^{[2]} & \ddots \\
			& & \ddots & \ddots & \epsilon_{t} \\
			& & & \gamma_{t-1}^{[2]}  & \delta_{t}^{[2]} \\
			& & & & \gamma_{t}^{[2]} \\
		\end{bmatrix}&. \label{eq:block_R} \\
		\QQ_{t} \triangleq \prod_{i=1}^{t}\QQ_{i,i+1}, \quad \QQ_{i,i+1} \triangleq 
		\begin{bmatrix}
			\eye_{i-1} & & & \\
			& c_{t} & s_{t} &\\
			& s_{t} & -c_{t} & \\
			& & & \eye_{t-i}
		\end{bmatrix}&, \label{eq:block_Q}
	\end{align}
\end{subequations}
In fact, the same series of transformations are also simultaneously applied to $\tbeta_{1} \ee_1$ as 
\begin{align*}
	\QQ_{t}\tbeta_1\ee_1 = \tbeta_1
	\begin{bmatrix}
		c_1\\
		s_1 c_2\\
		\vdots\\
		s_1s_2\dots s_{t-1}c_{t}\\
		s_1s_2\dots s_{t-1}s_{t}
	\end{bmatrix} \triangleq
	\begin{bmatrix}
		\tau_1\\
		\tau_2\\
		\vdots\\
		\tau_{t}\\
		\phi_{t}
	\end{bmatrix} \triangleq \begin{bmatrix}
		\ttt_{t} \\
		\phi_{t}
	\end{bmatrix}.
\end{align*}

With these quantities available, we can solve \cref{eq:MINRES_sub} by noting that
\begin{align*}
	\min_{\yy_{t} \in \real^{t}}  \vnorm{\rr_{t}} &= \min_{\yy_{t} \in \real^{t}} \vnorm{\tbeta_1\ee_1 + \bTT_{t}\yy_{t}}  = \min_{\yy_{t} \in \real^{t}} \vnorm{\QQ_{t}^{\T}\left(\QQ_{t}\tbeta_1\ee_1 + \QQ_{t} \bTT_{t} \yy_{t}\right)} \\
	& = \min_{\yy_{t} \in \real^{t}} \vnorm{\QQ_{t}\tbeta_1\ee_1 + \QQ_{t} \bTT_{t} \yy_{t}} = \min_{\yy_{t} \in \real^{t}} \vnorm{\begin{bmatrix}
			\ttt_{t}\\
			\phi_{t}
		\end{bmatrix} + 
		\begin{bmatrix}
			\bRR_{t} \\
			\zero^{\T}
		\end{bmatrix}\yy_{t}}.
\end{align*}
Note that this in turn implies that $ \phi_{t} = \| \rr_{t} \| $. We also trivially have $ \phi_{0} = \tbeta_{1} = \vnorm{\gg} $.

\subsubsection*{Updates}
Let $ t < g $ and define $ \WW_{t} $ from solving the lower triangular system $ \RR_{t}^{\T} \WW_{t}^{\T} = \VV_{t}^{\T} $ where $ \RR_{t} $ is as in \cref{eq:block_R}. Now, letting $ \VV_{t} = [ \VV_{t-1} \mid \vv_{t}] $, and using the fact that $ \RR $ is upper-triangular, we get the recursion $ \WW_{t} = [ \WW_{t-1} \mid \ww_{t}] $ for some vector $ \ww_{t} $. As a result, using $ \RR_{t} \yy_{t} = \ttt_{t} $, one can update the iterate as 
\begin{align*}
	\ss_{t} = \VV_{t} \yy_{t} = \WW_{t} \RR_{t} \yy_{t} = \WW_{t} \ttt_{t} = \begin{bmatrix}
		\WW_{t-1} & \ww_{t}
	\end{bmatrix} \begin{bmatrix}
		\ttt_{t-1} \\
		\tau_{t}
	\end{bmatrix} = \ss_{t-1} + \tau_{t} \ww_{t}.
\end{align*}
We also always set $ \ss_{0} = \zero $. Furthermore, from $ \VV_{t} = \WW_{t} \RR_{t} $, i.e., 
\begin{align*}
	\begin{bmatrix}
		\vv_1 & \vv_2 & \ldots & \vv_{t}
	\end{bmatrix} = \begin{bmatrix}
		\ww_1 & \ww_2 & \ldots & \ww_{t}
	\end{bmatrix} \begin{bmatrix}
		\gamma_1^{[2]} & \delta_2^{[2]} & \epsilon_3 & & \\
		& \gamma_2^{[2]} & \ddots & \ddots & \\
		& & \ddots & \ddots & \epsilon_{t} \\
		& & & \gamma_{t-1}^{[2]}  & \delta_{t}^{[2]} \\
		& & & & \gamma_{t}^{[2]}
	\end{bmatrix},
\end{align*}
we get the following relationship for computing $ \vv_{t} $ as
\begin{align*}
	\vv_{t} = \epsilon_{t} \ww_{t-2} + \delta^{[2]}_{t} \ww_{t-1} + \gamma^{[2]}_{t} \ww_{t}.
\end{align*}
All of the above steps constitute MINRES method, which is given in \cref{alg:MINRES}. 

\subsection{Other Relevant Properties}
\label{sec:MINRES_property}
Beyond the descent implications discussed in \cref{sec:MINRES_Descent}, MINRES offers a plethora of relevant properties that we leverage in developing the algorithms of this paper and obtaining their convergence guarantees. We briefly mention these relevant properties here and invite the reader to consult \cite{liu2022minres} for further details and proofs.

The following lemma contains several useful facts about the iterations of MINRES.
\begin{lemma}
	\label{lemma:MINRES_properties}
	Let $g$ be the grade of $ \gg $ with respect to $ \HH $. In MINRES, for any $ 1 \leq t \leq g $, we have 
	\begin{subequations}        
		\begin{align}
			\dotprod{\rr_{t}, \HH \rr_{i}} &= 0, \quad i \neq t, \quad 1 \leq i \leq g, \label{eq:rTHr_conjugated} \\
			\dotprod{\rr_{t}, \gg} &= - \vnorm{\rr_{t}}^2, \label{eq:rTg} \\
			\dotprod{\rr_{t-1}, \HH \rr_{t-1}} &= - c_{t-1} \gamma_{t} \vnorm{\rr_{t-1}}^2, \label{eq:NPC_curve} \\
			\vnorm{\HH \sstp} &= \sqrt{\phi_{0}^{2} - \phi_{t-1}^{2}}, \label{eq:Hs} \\
			\vnorm{\HH \rrtp} &= \phi_{t-1} \sqrt{\gamma_{t}^{2} + (\delta_{t+1})^{2}} \label{eq:Hr}.
		\end{align}
	\end{subequations}
\end{lemma}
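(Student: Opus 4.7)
The heart of every identity is the familiar MINRES optimality condition: since $\ss_t$ minimizes $\vnorm{\HH\ss+\gg}^2$ over $\Klv_t(\HH,\gg)$, the residual $\rr_t = -\gg - \HH\ss_t$ satisfies $\rr_t \perp \HH\Klv_t(\HH,\gg)$. I would derive the first three identities from this alone, and then fall back to the Lanczos/QR machinery of \cref{sec:MINRES_details} for the last two.

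\emph{For \cref{eq:rTg}.} Since $\ss_t \in \Klv_t(\HH,\gg)$, the optimality condition gives $\dotprod{\rr_t,\HH\ss_t}=0$. Rewriting $\gg = -\rr_t - \HH\ss_t$ and taking the inner product with $\rr_t$ yields $\dotprod{\rr_t,\gg} = -\vnorm{\rr_t}^2$.

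\emph{For \cref{eq:rTHr_conjugated}.} For $1\le i < t$, I would note $\rr_i = -\gg - \HH\ss_i \in \gg + \HH\Klv_i(\HH,\gg) \subseteq \Klv_{i+1}(\HH,\gg) \subseteq \Klv_t(\HH,\gg)$. Hence $\HH\rr_i \in \HH\Klv_t(\HH,\gg)$, and the optimality condition gives $\dotprod{\rr_t,\HH\rr_i}=0$. Symmetry of $\HH$ handles $i>t$ by swapping the roles of $t$ and $i$.

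\emph{For \cref{eq:Hs}.} Taking squared norms in $\gg = -\rr_{t-1} - \HH\ss_{t-1}$ and expanding gives $\vnorm{\gg}^2 = \vnorm{\rr_{t-1}}^2 + 2\dotprod{\rr_{t-1},\HH\ss_{t-1}} + \vnorm{\HH\ss_{t-1}}^2$. The cross term vanishes by MINRES optimality, leaving $\vnorm{\HH\ss_{t-1}}^2 = \vnorm{\gg}^2 - \vnorm{\rr_{t-1}}^2 = \phi_0^2 - \phi_{t-1}^2$.

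\emph{For \cref{eq:NPC_curve} and \cref{eq:Hr}.} This is where the real work lies, and is where I expect the main obstacle. The plan is to exploit the closed-form expression $\rr_{t-1} = -\phi_{t-1}\,\VV_t\QQ_{t-1}^\T\ee_t$ (derived from $\phi_0\ee_1 + \bTT_{t-1}\yy_{t-1} = \phi_{t-1}\QQ_{t-1}^\T\ee_t$ and $\ss_{t-1} = \VV_{t-1}\yy_{t-1}$). Combined with the Lanczos identity $\HH\VV_t = \VV_{t+1}\bTT_t$, this gives $\HH\rr_{t-1} = -\phi_{t-1}\,\VV_{t+1}\bTT_t\QQ_{t-1}^\T\ee_t$, and by orthonormality of $\VV_{t+1}$,
\begin{equation*}
\vnorm{\HH\rr_{t-1}}^2 = \phi_{t-1}^2\vnorm{\bTT_t\QQ_{t-1}^\T\ee_t}^2,\qquad \dotprod{\rr_{t-1},\HH\rr_{t-1}} = \phi_{t-1}^2\,\ee_t^\T\QQ_{t-1}\TT_t\QQ_{t-1}^\T\ee_t.
\end{equation*}
The key lemma I would prove is that applying $\tilde\QQ_{t-1}$ (the embedding of $\QQ_{t-1}$ into $\real^{(t+1)\times(t+1)}$) to $\bTT_t$ produces the block form
\begin{equation*}
\tilde\QQ_{t-1}\bTT_t = \begin{bmatrix} \RR_{t-1} & \vv \\ 0 & \gamma_t^{(1)} \\ 0 & \tbeta_{t+1} \end{bmatrix},
\end{equation*}
which follows by tracing how the $t-2$ Givens rotations inside $\tilde\QQ_{t-1}$ act on the last column of $\bTT_t$, and then recognizing the entries $\delta_t^{(1)}, \delta_t^{(2)}, \gamma_t^{(1)}$ defined in \hyperref[alg:MINRES]{\cref{alg:MINRES}}, together with $\delta_{t+1}^{(1)} = -c_{t-1}\tbeta_{t+1}$. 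A parallel Givens computation shows $(\QQ_{t-1}^\T\ee_t)_t = -c_{t-1}$. Substituting these block structures and computing the two quadratic forms directly yields $\dotprod{\rr_{t-1},\HH\rr_{t-1}} = -c_{t-1}\gamma_t^{(1)}\vnorm{\rr_{t-1}}^2$ and, by combining the contribution of the last Lanczos direction $\vv_{t+1}$ (which contributes $\delta_{t+1}^{(1)}$) with that from $\VV_t$ (which contributes $\gamma_t^{(1)}$), $\vnorm{\HH\rr_{t-1}}^2 = \phi_{t-1}^2\bigl((\gamma_t^{(1)})^2 + (\delta_{t+1}^{(1)})^2\bigr)$. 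The main obstacle is keeping the Givens bookkeeping straight: one must carefully justify that the cross-block terms in $\vnorm{\tilde\QQ_{t-1}\bTT_t\QQ_{t-1}^\T\ee_t}^2$ combine with the diagonal contributions in exactly the right way, which is precisely where the identity $\vnorm{\QQ_{t-1}^\T\ee_t}^2 = 1$ (together with $w_t = -c_{t-1}$) cleans up the algebra.
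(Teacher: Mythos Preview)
Your approach is correct and, in fact, more self-contained than the paper's. The paper does not prove \cref{eq:rTHr_conjugated,eq:rTg,eq:NPC_curve} at all: it simply cites \cite[Lemma~3.1]{liu2022minres}. For \cref{eq:Hs} it does exactly what you do (expand $\vnorm{\gg}^2=\vnorm{\rrtp+\HH\sstp}^2$ and kill the cross term with optimality), and for \cref{eq:Hr} it again cites \cite[Eqn.~(3.2)]{liu2022minres}, namely the closed form $\HH\rrtp=\phi_{t-1}\bigl(\gamma_t^{(1)}\vv_t+\delta_{t+1}^{(1)}\vv_{t+1}\bigr)$, and just takes the norm. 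So where the paper outsources, you actually supply the argument.

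One simplification for your last step. Your block-QR plan works, but the ``cross-block'' bookkeeping you flag as the main obstacle can be avoided entirely. From MINRES optimality you already know $\rrtp\perp\HH\Klv_{t-1}(\HH,\gg)$; by symmetry of $\HH$ this gives $\langle\vv_i,\HH\rrtp\rangle=\langle\HH\vv_i,\rrtp\rangle=0$ for all $i\le t-1$, so $\HH\rrtp\in\Span\{\vv_t,\vv_{t+1}\}$. Hence you only need the two coefficients $\langle\vv_t,\HH\rrtp\rangle$ and $\langle\vv_{t+1},\HH\rrtp\rangle$. Using your representation $\rrtp=\pm\phi_{t-1}\VVt\QQ_{t-1}^\T\ee_t$ together with $(\QQ_{t-1}^\T\ee_t)_t=-c_{t-1}$ and $(\QQ_{t-1}^\T\ee_t)_{t-1}=-c_{t-2}s_{t-1}$, the tridiagonal structure of $\bTT_t$ immediately gives those two coefficients as $\gamma_t^{(1)}$ and $\delta_{t+1}^{(1)}$, matching the algorithm's update rule. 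This yields both \cref{eq:Hr} and \cref{eq:NPC_curve} without ever forming or analyzing the full product $\tilde\QQ_{t-1}\bTT_t\QQ_{t-1}^\T\ee_t$.
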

\begin{proof}
	We get \cref{eq:rTHr_conjugated,eq:rTg,eq:NPC_curve} directly from \cite[Lemma 3.1]{liu2022minres}. Applying \cref{eq:rTg}, we obtain \cref{eq:Hs} as 
	\begin{align*}
		\vnorm{\HH \sstp}^2 &= \vnorm{\rrtp + \gg}^2 = \vnorm{\gg}^{2} - \vnorm{\rrtp}^{2} = \phi_{0}^{2} - \phi_{t-1}^{2}.
	\end{align*}
	By \cite[Eqn (3.2)]{liu2022minres}, we can formulate $ \HH \rrtp $ as
	\begin{align*}
		\HH \rrtp = \phi_{t-1} (\gamma_{t} \vv_{t} + \delta_{t+1} \vv_{t+1}),
	\end{align*}
	which implies \cref{eq:Hr}.
\end{proof}

From \cref{eq:NPC_curve}, it is immediate that if for some $ t \leq g $, we have
\begin{align}
	\label{eq:NPC_cond}
	- c_{t-1} \gamma_{t} \leq 0,
\end{align}
then $ \rr_{t-1} $ must be a NPC direction for $ \HH $, i.e., \cref{eq:NPC} holds. The crucial question, answered in \cite{liu2022minres}, is whether or not the condition \cref{eq:NPC_cond} is both necessary and sufficient for a NPC direction to be available. This question was investigated by observing a tight connection between the tridiagonal symmetric matrix $ \TTt $ and the condition \cref{eq:NPC_cond}. Indeed, since $ \rrtp \in \Kt{\HH, \gg} $, we can $ \rrtp = \VVt \pp $ for some non-zero $ \pp \in \real^{t} $. Hence, 
\begin{align*}
	\frac{\dotprod{\pp, \TTt \pp}}{\vnorm{\pp}^2} = \frac{\dotprod{\VVt \pp, \HH \VVt \pp}}{\vnorm{\VVt \pp}^2} = \frac{\dotprod{\rr_{t-1}, \HH \rr_{t-1}}}{\vnorm{\rr_{t-1}}^2} = - c_{t-1} \gamma_{t}.
\end{align*}
Clearly, as long as $ \TT_{t} \succ \zero $, the condition \cref{eq:NPC_cond} cannot hold. The following result from \cite{liu2022minres} shows the converse also holds, i.e., as soon as  $ \TT_{t} \not \succ \zero $ for some $ t \leq g $, MINRES declares $ \rrktp $ as a NPC direction.
\begin{lemma}[{\!\!\cite[Theorem 3.3]{liu2022minres}}]
	\label{lemma:MINRES_NPC_detector}
	Let $ k \leq g $ where $ g $ is the grade of $ \gg $ with respect to $ \HH $. If $ \TTt \not \succ \zero $, then the NPC condition \cref{eq:NPC_cond} holds for some $ t \leq k $. In particular, if $ t \leq g $ is the first iteration where $ \TT_{t} \not \succ \zero $, then the NPC condition \cref{eq:NPC_cond} holds.
\end{lemma}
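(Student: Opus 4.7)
The plan is to argue contrapositively via an induction-like sweep over the Lanczos iterations: show that as long as the NPC condition \cref{eq:NPC_cond} has never triggered through iteration $k$, the tridiagonal matrix $\TT_k$ remains strictly positive definite. Combined with the Sturm sequence (Cauchy interlacing) property for leading principal submatrices of symmetric tridiagonal matrices, it then suffices to locate the first iteration $t^\star \leq k$ at which positive definiteness is lost and show that the NPC condition must fire at exactly that step.

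First I would handle the base case $t^\star = 1$: here $\TT_1 = \talpha_1 = \vv_1^{\T} \HH \vv_1 = \dotprod{\gg, \HH \gg}/\vnorm{\gg}^2$, so $\TT_1 \not\succ \zero$ is literally the statement $\dotprod{\rr_0, \HH \rr_0} \le 0$, which by \cref{eq:NPC_curve} is $-c_0 \gamma_1^{(1)} \le 0$, i.e., the NPC condition at $t=1$. For the inductive step, I would assume $\TT_1,\ldots,\TT_{t^\star-1} \succ \zero$ and $\TT_{t^\star} \not\succ \zero$. By Cauchy interlacing applied to $\TT_{t^\star - 1}$ (the leading principal submatrix of $\TT_{t^\star}$), the new matrix $\TT_{t^\star}$ acquires at most one non-positive eigenvalue, so in particular $\det(\TT_{t^\star}) \leq 0$ while $\det(\TT_{t^\star-1}) > 0$.

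The remaining task is to pin this determinantal sign change directly onto the quantity $c_{t^\star-1}\gamma_{t^\star}^{(1)}$ appearing in \cref{eq:NPC_cond}. Here I would exploit the QR recurrences of \cref{eq:T_QR_decomp} and in particular the scalar formulas \cref{eq:c_s_r2}. The crucial identity I would establish (by induction on the Givens sweeps) is a recursion of the form
\begin{align*}
    \det(\TT_{t}) \;=\; -\,c_{t-1}\gamma_{t}^{(1)} \cdot \frac{\det(\TT_{t-1})}{\text{(positive factor from prior Givens rotations)}},
\end{align*}
so that under the inductive hypothesis $\det(\TT_{t^\star - 1}) > 0$, the sign of $\det(\TT_{t^\star})$ coincides with the sign of $-c_{t^\star - 1} \gamma_{t^\star}^{(1)}$. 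Together with $\det(\TT_{t^\star}) \le 0$, this forces $-c_{t^\star - 1}\gamma_{t^\star}^{(1)} \le 0$, establishing the NPC condition \cref{eq:NPC_cond} at $t^\star$.

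I expect the main obstacle to be verifying this sign-tracking identity rigorously: the Householder reflections $\QQ_{i,i+1}$ in \cref{eq:block_Q} mix rows and introduce signs via $c_i$ and $s_i$, and one must carefully factor $\det(\bTT_t)$ through $\det(\QQ_t) \det(\bRR_t)$ while noting that $\TT_t$ is the top $t \times t$ block of $\bTT_t$, not $\bTT_t$ itself. An alternative (possibly cleaner) route, which I would attempt as a backup if the determinantal bookkeeping becomes unwieldy, is a direct Rayleigh-quotient argument: since $\rr_{t^\star-1} = \VV_{t^\star}\pp$ for some $\pp \in \real^{t^\star}$ and, by \cref{eq:NPC_curve}, $\pp^{\T}\TT_{t^\star}\pp/\vnorm{\pp}^2 = -c_{t^\star-1}\gamma_{t^\star}^{(1)}$, one would show that when $\TT_{t^\star}$ first becomes non-positive definite, the unique non-positive eigenvector of $\TT_{t^\star}$ is necessarily aligned with $\pp$ (owing to the Krylov subspace structure and the minimum-residual optimality of $\sst$), which again yields $-c_{t^\star-1}\gamma_{t^\star}^{(1)} \le 0$.
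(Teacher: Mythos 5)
The paper does not supply a proof of this lemma: it is quoted verbatim from \cite[Theorem~3.3]{liu2022minres}, so there is no in-paper argument to compare against. I will therefore assess your proposal on its own merits.

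Your primary route is correct and self-contained. The base case is checked exactly as you say: $\TT_1 = \talpha_1 = \dotprod{\gg,\HH\gg}/\vnorm{\gg}^2$, and with $c_0 = -1$, $\gamma_1^{(1)} = \talpha_1$, so $\TT_1 \not\succ \zero$ is literally $c_0\gamma_1^{(1)} = -\talpha_1 \ge 0$. For the general step, the ``positive factor'' you leave unspecified can be made explicit. From the Givens sweep, $\det(\TT_t) = (-1)^{t-1}\gamma_1^{(2)}\cdots\gamma_{t-1}^{(2)}\gamma_t^{(1)}$ (the last Givens $Q_{t,t+1}$ does not touch the $t\times t$ block, so its diagonal entry is $\gamma_t^{(1)}$, not $\gamma_t^{(2)}$, and each reflector $\begin{bsmallmatrix}c&s\\s&-c\end{bsmallmatrix}$ contributes $-1$ to the determinant). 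Dividing by the same expression for $\det(\TT_{t-1})$ and using $c_{t-1}=\gamma_{t-1}^{(1)}/\gamma_{t-1}^{(2)}$ gives
\begin{align*}
-c_{t-1}\gamma_t^{(1)} \;=\; c_{t-1}^2\,\frac{\det(\TT_t)}{\det(\TT_{t-1})},
\end{align*}
so the ``positive factor'' is precisely $c_{t-1}^2$. Under the hypothesis that $t^\star$ is the first index with $\TT_{t^\star}\not\succ\zero$, Sylvester's criterion gives $\det(\TT_{t^\star-1})>0$ hence $\gamma_{t^\star-1}^{(1)}\neq 0$, so $c_{t^\star-1}\neq 0$; combined with the Cauchy-interlacing observation that $\TT_{t^\star}$ has exactly one eigenvalue $\le 0$ (hence $\det(\TT_{t^\star})\le 0$), the identity forces $-c_{t^\star-1}\gamma_{t^\star}^{(1)}\le 0$, which is NPC condition \cref{eq:NPC_cond} at $t^\star$. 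The first clause of the lemma follows immediately since $t^\star \le k$ exists whenever $\TT_k\not\succ\zero$.

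One caution on your proposed ``backup'' route: merely knowing that $\pp$ (with $\rr_{t^\star-1}=\VV_{t^\star}\pp$) has a nonzero component along the unique non-positive eigenvector of $\TT_{t^\star}$ would \emph{not} by itself give $\pp^\T\TT_{t^\star}\pp\le 0$, because the contributions from the positive eigendirections could dominate. You would have to show the components along positive eigenvectors are small enough (or vanish), which is a substantially harder claim and is not obviously implied by the minimum-residual optimality. So the backup is unreliable as sketched; stick with the determinantal route, which is rigorous once the factor $c_{t-1}^2$ is filled in.
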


As long as the NPC condition \cref{eq:NPC_cond} has not been detected, MINRES enjoys additional properties regarding the signs of certain quadratic functions as well as the monotonicity of certain quantities, which are used in the analysis of this paper. In particular, before \cref{eq:NPC_cond} is detected, \cref{lemma:sTr} shows that, not only is $ \sst $ a descent direction for $ \vnorm{\gg}^{2} $, but it can also yield descent for the original objective $ f $. 
\begin{lemma}[{\!\!\cite[Theorems 3.8 and 3.11]{liu2022minres}}]
	\label{lemma:sTr}
	Let $g$ be the grade of $ \gg $ with respect to $ \HH $. As long as the NPC condition \cref{eq:NPC_cond} has not been detected for $1 \leq t \leq g$, we must have $ \dotprod{\ss_{t}, \gg} + \dotprod{\ss_{t}, \HH \ss_{t}}  \leq 0 $.
\end{lemma}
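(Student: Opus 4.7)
The plan is to prove the inequality by expanding $\sst$ in the basis of previous residuals $\{\rr_0,\rr_1,\ldots,\rr_{t-1}\}$, which are well-behaved precisely because NPC has not been detected. The key identity to keep in sight is
\begin{align*}
\dotprod{\sst,\gg} + \dotprod{\sst,\HH\sst} = \dotprod{\sst,\gg+\HH\sst} = -\dotprod{\sst,\rrt},
\end{align*}
so the entire task reduces to showing $\dotprod{\sst,\rrt}\ge 0$.

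First I would verify that $\{\rr_0,\ldots,\rr_{t-1}\}$ is a valid (and $\HH$-conjugate) basis for $\krylovt{\HH,\gg}$. Since the NPC condition \cref{eq:NPC_cond} has not triggered up to iteration $t$, \cref{lemma:MINRES_NPC_detector} yields $\TT_t \succ \zero$, and \cref{eq:NPC_curve} in \cref{lemma:MINRES_properties} gives $\dotprod{\rr_j,\HH\rr_j} > 0$ for $0\le j\le t-1$. Combined with the $\HH$-conjugacy relation \cref{eq:rTHr_conjugated}, these residuals are linearly independent; since there are $t$ of them inside the $t$-dimensional space $\krylovt{\HH,\gg}$, they form a basis, so I can write $\sst = \sum_{j=0}^{t-1}\xi_j\rr_j$.

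Next I would derive an explicit formula for each $\xi_j$ by taking the inner product of $-\gg - \HH\sst = \rrt$ with $\rr_j$ for $j<t$. Using \cref{eq:rTg} and the MINRES optimality condition — namely, that $\rrt$ is orthogonal to $\HH\krylovt{\HH,\gg}$, together with $\rr_j = -\gg - \HH\ss_j$ and $\ss_j \in \krylovk{\HH,\gg} \subseteq \krylovt{\HH,\gg}$ — one obtains $\dotprod{\rr_j,\rrt} = \|\rrt\|^2$. Combining this with $\HH$-conjugacy,
\begin{align*}
\xi_j\,\dotprod{\rr_j,\HH\rr_j} = \dotprod{\rr_j,\HH\sst} = \dotprod{\rr_j,-\gg-\rrt} = \|\rr_j\|^2 - \|\rrt\|^2 \;\ge\; 0,
\end{align*}
where the final inequality uses the well-known monotonicity $\|\rr_j\|\ge\|\rrt\|$ of MINRES residuals. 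Since $\dotprod{\rr_j,\HH\rr_j}>0$, this forces $\xi_j\ge 0$.

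Finally, a short direct computation finishes the argument: using the basis expansion, $\HH$-conjugacy \cref{eq:rTHr_conjugated}, and \cref{eq:rTg},
\begin{align*}
\dotprod{\sst,\gg}+\dotprod{\sst,\HH\sst} &= -\sum_{j=0}^{t-1}\xi_j\|\rr_j\|^2 + \sum_{j=0}^{t-1}\xi_j^2 \dotprod{\rr_j,\HH\rr_j} \\
&= \sum_{j=0}^{t-1}\xi_j\bigl(-\|\rr_j\|^2 + (\|\rr_j\|^2-\|\rrt\|^2)\bigr) \;=\; -\|\rrt\|^2\sum_{j=0}^{t-1}\xi_j \;\le\; 0.
\end{align*}
The main obstacle, and the crux of the argument, is establishing $\dotprod{\rr_j,\rrt}=\|\rrt\|^2$; every other step is a relatively routine consequence of already-stated MINRES properties in \cref{lemma:MINRES_properties,lemma:MINRES_NPC_detector}.
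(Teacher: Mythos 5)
Your proof is correct. The paper states \cref{lemma:sTr} purely by citation to \cite{liu2022minres} and offers no in-paper argument, so there is nothing internal to compare against; what you have supplied is a valid, self-contained derivation from facts the paper does record. You reduce the claim to $\dotprod{\sst,\rrt}\ge 0$, expand $\sst$ in the residual basis $\{\rr_0,\dots,\rr_{t-1}\}$ (valid because \cref{lemma:MINRES_NPC_detector} gives $\TT_t\succ\zero$, hence \cref{eq:NPC_curve} gives $\dotprod{\rr_j,\HH\rr_j}>0$, which together with the $\HH$-conjugacy \cref{eq:rTHr_conjugated} makes those residuals linearly independent and therefore a basis of $\Klvt(\HH,\gg)$), establish $\xi_j\ge 0$ via the identity $\xi_j\dotprod{\rr_j,\HH\rr_j}=\vnorm{\rr_j}^2-\vnorm{\rr_t}^2\ge 0$, and conclude. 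The only ingredient not literally restated in the paper is $\dotprod{\rr_j,\rr_t}=\vnorm{\rr_t}^2$ for $j<t$, which you correctly derive from $\rr_j=-\gg-\HH\ss_j$, \cref{eq:rTg}, and the MINRES Petrov--Galerkin condition $\rr_t\perp\HH\Klvt(\HH,\gg)$; the latter is immediate from the characterization \cref{eq:MINRES}, and the residual-norm monotonicity $\vnorm{\rr_j}\ge\vnorm{\rr_t}$ follows from nestedness of the Krylov subspaces. One stylistic note: having already reduced to $\dotprod{\sst,\rrt}\ge 0$, you could finish more briskly with $\dotprod{\sst,\rrt}=\sum_j\xi_j\dotprod{\rr_j,\rrt}=\vnorm{\rrt}^2\sum_j\xi_j\ge 0$, bypassing the final re-expansion; but your computation is equally valid, and every step checks out.
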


    \section*{Acknowledgments}
    Y. Liu is supported by the Hong Kong Innovation and Technology Commission (InnoHK Project CIMDA). Fred Roosta was partially supported by the Australian Research Council through an Industrial Transformation Training Centre for Information Resilience (IC200100022) as well as a Discovery Early Career Researcher Award (DE180100923).
    
    \bibliographystyle{plain}
    \bibliography{biblio}
	
\end{document}